\declaretheoremstyle[
  spaceabove=\topsep,
  spacebelow=\topsep,
  headfont=\normalfont\bfseries,
  notefont=\normalfont\bfseries,
  bodyfont=\normalfont\itshape,
  headpunct=.,
]{myplain}
\theoremstyle{myplain}
\newtheorem{theorem}{Theorem}[section]
\newtheorem{lemma}[theorem]{Lemma}
\newtheorem{corollary}[theorem]{Corollary}
\newtheorem{remark}[theorem]{Remark}
\newcommand{\thm}[1]{Theorem~\ref{#1}}
\newcommand{\lem}[1]{Lemma~\ref{#1}}
\newcommand{\cor}[1]{Corollary~\ref{#1}}
\newcommand{\rem}[1]{Remark~\ref{#1}}
\def\blfootnote{\xdef\@thefnmark{}\@footnotetext}
 \DeclareMathOperator{\dist}{dist}
\newcommand{\sect}[1]{\section{#1}  \setcounter{equation}{0}  }
\newcommand{\norm}[2]{\left\|#1\right\|_{#2}}
\newcommand{\xb}{\bar{x}}
\newcommand{\Y}{\mathbb Y}
\newcommand{\A}{\mathbb A}
\newcommand{\Poly}{\Pi}
\newcommand{\Pn}{\Poly_n}
\newcommand{\N}{\mathbb N}
\newcommand{\Z}{\mathbb Z}
\newcommand{\R}{\mathbb R}
\newcommand{\andd}{\quad\mbox{\rm and}\quad}
\newcommand\w{{\omega}}
\newcommand\e{\varepsilon}
\newcommand{\y}{\upsilon}
\newcommand{\s}{q}
\newcommand{\I}{{\mathcal I}}
 \newcommand{\un}{I_j\cup I_{j+1}}
\newcommand{\ds}{\Delta^{(0)}(Y_{s})}
\newcommand{\tn}{\mathbf{T}_n}
\newcommand{\zn}{\mathbf{Z}_n}
\newcommand{\ozn}{\overline{\mathbf{Z}}_N}
\newcommand{\jj}{{\cal J}}
\newcommand{\tp}{p}
\newcommand{\tr}{{\tilde r}}
\newcommand{\ttr}{{\hat r}}
\newcommand{\tG}{{\widetilde G}}
\newcommand{\tf}{{\widetilde f}}
\newcommand{\tQ}{{\widetilde Q}}
\newcommand{\zbar}{\bar z}
\newcommand{\ibar}{\bar I}
\newcommand{\itilde}{\widetilde I}
\newcommand{\ibari}{\bar I_i}
\newcommand{\intset}{\Upsilon}
\newcommand{\copset}{\Upsilon^{(0)}}
\newcommand{\one}{\widetilde{E}_n(f)}
\newcommand{\intertw}{\widetilde{E}_n(f, Y_s)}
\newcommand{\td}{\widetilde\Delta(f, Y_s)}
 \newcommand{\J}{{\mathcal{J}}}
\def\be  {\begin{equation}}
\def\ee  {\end{equation}}
\def \sgn{\mathop{\rm sgn}\nolimits}
\def \inter{\mathop{\rm int}\nolimits}
\def \pmin{\mathop{\rm pmin}\nolimits}
\newcommand{\ineq}[1]{{\rm(\ref{#1})}}
\newcommand{\ie}{{\em i.e., }}
\newcommand{\eg}{{\em e.g. }}
\newcommand{\st}{\;\; \big| \;\;}
\title{{\sc Onesided, intertwining, positive and copositive polynomial approximation with interpolatory constraints}\thanks{{\it AMS classification:} 41A05, 41A10, 41A15,   41A25, 41A29    {\it Keywords
and phrases:} Intertwining,  copositive, positive, nonnegative, Hermite interpolation,  approximation by algebraic polynomials, exact pointwise estimates}}
\author{
 German Dzyubenko \thanks{Institute of Mathematics  NAS of Ukraine ({\tt dzyuben@gmail.com}). Supported in part by NAS of Ukraine project 0123U100853, and by Johns Hopkins University support to U4U.} \and
Kirill A.  Kopotun\thanks{Department of Mathematics, University of
Manitoba, Winnipeg, Manitoba, R3T 2N2, Canada ({\tt Crimea\_is\_Ukraine@shaw.ca}).  Supported in part by NSERC grant RGPIN-05678-2020.} \and

}
\begin{document}

\maketitle

\begin{abstract}
Given $k\in\N$,  a nonnegative function $f\in C^r[a,b]$, $r\ge 0$, an arbitrary finite collection of points
 $\big\{\alpha_i\big\}_{i\in\J}  \subset  [a,b]$, and a corresponding collection of nonnegative integers
 $\big\{m_i\big\}_{i\in\J}$ with $0\le m_i \le r$, $i\in\J$,
is it true that, for sufficiently large $n\in\N$,
  there exists a polynomial  $P_n$ of degree  $n$   such that
 \begin{enumerate}[(i)]
\item $|f(x)-P_n(x)| \le c \rho_n^r(x) \w_k(f^{(r)}, \rho_n(x); [a,b])$,   $x\in [a,b]$,
where $\rho_n (x):= n^{-1} \sqrt{1-x^2}  +n^{-2}$  and $\w_k$ is the classical $k$-th modulus of smoothness,
\item $P^{(\nu)}(\alpha_i)=f^{(\nu)}(\alpha_i)$, for all $0\le \nu \le m_i$ and all $i\in\J$,
\item[] and
\item   either $P  \ge f$ on $[a,b]$  (\emph{onesided} approximation), or  $P  \ge 0$ on $[a,b]$ (\emph{positive} approximation)?
\end{enumerate}

We provide {\em precise answers}
not only to this question, but also to  similar questions for more general {\em intertwining} and {\em copositive} polynomial approximation.
It turns out that many of these answers are quite unexpected.

We also show that, in general,  similar questions for $q$-monotone approximation with $q\ge 1$ have negative answers, \ie $q$-monotone approximation with general  interpolatory constraints  is impossible if $q\ge 1$.

\end{abstract}

\tableofcontents

\sect{Introduction and Main Results}\label{section1}

In this paper, we are interested in shape preserving approximation    (see \eg \cite{klps2011} for a brief introduction to this area) with an additional restriction that approximating polynomials interpolate the function that is being approximated at a given set of points.

Without any shape restrictions, we can easily do this and also preserve the rate of best approximation in the uniform norm (it is rather evident that this is no longer possible in the integral norms). Indeed, it is sufficient to  correct  the polynomial of best approximation using an appropriate  Lagrange  polynomial obtaining a near best polynomial approximant satisfying the needed interpolation conditions (the constant will necessarily depend on the minimum distance among all interpolation points).

Hence,  the following question is rather natural:
 If $f$ has a certain shape that we wish to preserve while approximating it by polynomials,  is interpolation at a given set of points possible and, if so, what can be said about errors of approximation of $f$ by such polynomials?
Our goal is to provide answers to this question.

\medskip

In order to discuss this further, we now recall some standard notations and definitions (see \eg \cite{klps2011}).
We say that a function $f$ is   $q$-monotone 
on an interval $J$ and write $f\in \Delta^{(q)}(J)$) if, for all collections of $q+1$ distinct points $t_0, t_1, \dots, t_q$ in $J$, the divided differences $[t_0, \dots, t_q; f]$ are nonnegative. In particular, $\Delta^{(0)}(J)$, $\Delta^{(1)}(J)$ and $\Delta^{(2)}(J)$ are the sets of all nonnegative, nondecreasing and convex functions on $J$, respectively. If $q\ge 3$, then functions $f$ from $\Delta^{(q)}(J)$ are $(q-2)$-times differentiable in  $\inter J$ (the interior of $J$) and $f^{(q-2)} \in \Delta^{(2)}(\inter J)$.

Let $\Y_s$, $s\in\N$, be the set of all collections $Y_s:=\big\{y_i\big\}_{i=1}^s$ of points $y_i$, such that $-1<y_s<\dots<y_1<1$, and define $y_0 := 1$ and $y_{s+1} := -1$.
We denote by $\Delta^{(q)}(Y_s)$ the set of functions which change their $q$-monotonicity at the points in $Y_s$. More precisely,
\[
f \in \Delta^{(q)}(Y_s) \quad \iff \quad (-1)^i f \in \Delta^{(q)}([y_{i+1}, y_i]), \; 0\le i \le s .
\]
Also, if $s=0$, then $\Y_0 := \{\emptyset\}$ and $\Delta^{(q)}(Y_0) := \Delta^{(q)}(I)$, where $I := [-1,1]$.

Let $C^r := C^r(I)$ denote  the space of all $r$-times continuously differentiable functions on $I$, and
let $\A_p$, $p\in\N$, be the set of all collections
$A_p:=\big\{\alpha_i\big\}_{i=1}^p$    of points $\alpha_i$ such that $-1 \le  \alpha_p< \dots <\alpha_1 \le 1$. For a given $f \in C^r(I)$, $r\in\N_0$,  we denote
\[
\I^{(r)}(f, A_p) := \left\{ g\in C^r(I)  \st g^{(i)}(\alpha)=f^{(i)}(\alpha), \; \text{for all $\alpha\in A_p$ and $0\le i \le r$} \right\} .
\]
 In particular, if $r=0$, we use the notation $\I (f, A_p) := \I^{(0)}(f, A_p)$.  For convenience, we also denote   $\A_0 := \{\emptyset\}$.
Obviously,  if $f\in C^r(I)$, $r\ge 1$, then
$\I^{(m+1)}(f, A_p) \subset \I^{(m)}(f, A_p)$, for all $0\le m \le r-1$.

As usual,
$\Pn$ is the set of all polynomials of degree $\le n$, $\norm{\cdot}{J}$ is the uniform norm on $J$,  $\w_k(f, \cdot; J)$ is the $k$-th modulus of smoothness of $f$ on $J$,
$\rho_n (x):= n^{-1} \sqrt{1-x^2}  +n^{-2}$ and $\N_0 := \N\cup \{0\}$.
Throughout this paper,  $c(\dots)$  denote positive constants which depend only on the parameters in the parentheses. Also, the interval $I=[-1,1]$ will be omitted from the notation, \ie
 $\norm{\cdot}{} := \norm{\cdot}{I}$, $\w_k(f, \cdot) := \w_k(f, \cdot; I)$, etc.

Throughout this paper, if sets $Y_s$ and $A_p$   both appear in a statement, it is  assumed that they have no points in common, \ie  {\em  the restriction $Y_s \cap A_p = \emptyset$ is always assumed but is not explicitly stated.}
 Note that this has no influence on the generality of the obtained results and
 is done    for convenience only.

\subsection{If $q\ge 1$, then  $q$-monotone approximation with general  interpolatory constraints  is impossible}

We start with the following observation that implies that $q$-monotone polynomial approximation with interpolatory constraints  is, in general, not possible if $q\ge 1$.

\begin{lemma} \label{lem1}
For any  $q\in\N$, $s\in\N_0$ and $Y_s\in\Y_s$, there exists $f\in  \Delta^{(q)}(Y_s)$ and a set $A_p\in\A_p$, $p\ge q+2$, such that, for any $n\in\N_0$,
\[
\Delta^{(q)}(Y_s) \cap \I(f, A_p) \cap \Pn = \emptyset .
\]
\end{lemma}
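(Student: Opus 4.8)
The plan is to exploit a rigidity property that holds for polynomials but fails for general functions. Specifically, I would first isolate the following fact: if a polynomial $g$ satisfies $g^{(q)}\ge 0$ on an interval $J$ and vanishes at $q+1$ distinct points of $J$, then $g\equiv 0$. To see this, apply Rolle's theorem $q-1$ times to the $q+1$ zeros of $g$; this produces at least two distinct zeros of $g^{(q-1)}$ in $\inter J$. Since $g^{(q)}\ge 0$ on $J$ forces $g^{(q-1)}$ to be nondecreasing there, $g^{(q-1)}$ must vanish on the entire subinterval between those two zeros, and a polynomial vanishing on a nondegenerate interval is the zero polynomial. Hence $g^{(q-1)}\equiv 0$, so $\deg g\le q-2$; a polynomial of degree $\le q-2$ with $q+1$ zeros is identically zero. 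The crucial contrast is that a merely continuous $q$-monotone function with $q+1$ zeros in $J$ need only vanish on the subinterval they span, not globally — this asymmetry is the engine of the whole argument.

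Next I would construct $f$ and the interpolation set. Fix the rightmost interval $J:=[y_1,1]$ determined by $Y_s$ (which is all of $I$ when $s=0$), on which membership in $\Delta^{(q)}(Y_s)$ amounts to $f^{(q)}\ge 0$. I would choose $f\in\Delta^{(q)}(Y_s)$ that vanishes identically on a nondegenerate subinterval $[c,d]\subset\inter J$ yet is not identically zero on $J$. Such an $f$ exists: prescribe $f^{(q)}$ to be a continuous function on $I$ with $\sgn f^{(q)}=(-1)^i$ on each $[y_{i+1},y_i]$, vanishing on $[c,d]$ and strictly positive on part of $(d,1)$, then integrate $q$ times, fixing the constants of integration so that $f$ and its first $q-1$ derivatives vanish at $c$; this makes $f\equiv 0$ on $[c,d]$ while $f\not\equiv 0$ on $(d,1)$. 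I then take $A_p$ with $p=q+2$ to consist of $q+1$ distinct points in the open interval $(c,d)$ together with one further point $\alpha^{*}\in(d,1)$ at which $f(\alpha^{*})\ne 0$; all of these lie in $(y_1,1)$ and hence are automatically disjoint from $Y_s$.

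Finally I would derive the contradiction. Suppose that for some $n$ there were a polynomial $P\in\Delta^{(q)}(Y_s)\cap\I(f,A_p)\cap\Pn$. Then $P^{(q)}\ge 0$ on $J$, and $P$ agrees with $f$ at the $q+1$ points of $A_p$ lying in $(c,d)$, where $f\equiv 0$; thus $P$ has $q+1$ distinct zeros in $\inter J$ while $P^{(q)}\ge 0$ on $J$. The rigidity fact above forces $P\equiv 0$. But then $P(\alpha^{*})=0\ne f(\alpha^{*})$, contradicting $P\in\I(f,A_p)$. Since the argument places no restriction on $\deg P$, it applies to every $n\in\N_0$ at once, giving $\Delta^{(q)}(Y_s)\cap\I(f,A_p)\cap\Pn=\emptyset$.

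I expect the only point genuinely needing care — and the heart of the matter — to be the asymmetry emphasized in the first paragraph: the interpolation conditions force the nonpolynomial $f$ to agree with the zero polynomial only on $[c,d]$, leaving it free and nonzero at $\alpha^{*}$, whereas the same $q+1$ conditions rigidly pin the polynomial $P$ down to the zero polynomial everywhere. Everything else, including the explicit construction of an $f$ with the prescribed alternating sign pattern of $f^{(q)}$ and a flat piece, is routine and is the only computation I would spell out in full.
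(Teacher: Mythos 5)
Your proof is correct, and it reaches the paper's conclusion by the same global construction but with a different, more self-contained key ingredient. The paper does exactly what you do at the structural level: it takes a function that is identically zero on a portion of $(y_1,1)$ and nonzero further right (the explicit truncated power $f=(\cdot-\xi)_+^q$ with $\xi=(1+y_1)/2$), places $q+1$ interpolation nodes in the flat region and one extra node $\alpha_1$ where $f>0$, and then shows any $q$-monotone polynomial through the $q+1$ zeros must vanish identically. The difference is how that last rigidity step is proved: the paper invokes Bullen's criterion for $q$-monotone functions, stated as \ineq{bullen} and cited from the literature, applying it twice to two different $q$-point subsets of the zeros of $p_n$ to get $p_n\ge 0$ on $[\alpha_3,1]$ and $p_n\le 0$ on $[\alpha_4,\alpha_2]$, hence $p_n\equiv 0$ on an interval and therefore everywhere; you instead prove the rigidity from scratch via Rolle's theorem (two zeros of $P^{(q-1)}$, which is nondecreasing since $P^{(q)}\ge 0$, so $P^{(q-1)}$ vanishes on an interval, hence identically, forcing $P\in\Poly_{q-2}$ and then $P\equiv 0$). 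This restriction to smooth functions costs nothing, because both proofs only ever apply the rigidity to the polynomial itself. What the paper's route buys is brevity and a completely explicit $f$; what yours buys is independence from the cited result. One economy you missed: your construction of a $C^q$ function $f$ by integrating a prescribed $q$-th derivative is more work than necessary — the truncated power $(\cdot-\xi)_+^q$ already lies in $\Delta^{(q)}(Y_s)$, vanishes on $[-1,\xi]$, and is positive on $(\xi,1]$, so it serves your argument verbatim; you built a smooth $f$ only so that membership in $\Delta^{(q)}(Y_s)$ could be checked through the sign of $f^{(q)}$, which is a legitimate but avoidable detour.
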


 \begin{proof}
Given  $q\in\N$, $s\in\N_0$ and $Y_s\in\Y_s$, we set $\xi := (1+y_1)/2$ and  pick  $A_p$, $p\ge q+2$, so that  $y_1< \alpha_{q+2} < \dots < \alpha_2 < \xi < \alpha_1$. Then,
 $f := (\cdot -\xi)_+^q$   is clearly in $\Delta^{(q)}(Y_s)$.
 We now recall the following property of $q$-monotone functions (see \cite{b}*{Theorem 5}):
 \begin{quote}
 Let $f\in \Delta^{(q)}(J)$, $q\in\N$, and let $l_{q-1} := l_{q-1}(z_q, \dots, z_1)   \in\Poly_{q-1}$ be the Lagrange polynomial interpolating $f$ at $q$ distinct points in $J$:
 $z_q < z_{q-1} < \dots < z_1$.
  Then, $(-1)^i \left(  f(x) -l_{q-1}(x) \right) \ge 0$, for $x\in [z_{i+1}, z_i]$, $1\le i \le q$, and $f(x) -l_{q-1}(x)  \ge 0$, for $x \ge z_1$. In other words,
  \be \label{bullen}
  f -l_{q-1} \in \Delta^{(0)} \left(\{z_i\}_{i=1}^{q} \right) .
  \ee
 \end{quote}
 Note that this result is a generalization of the property that a convex function always lies below its chord.

 Suppose now that $p_n \in \Delta^{(q)}(Y_s) \cap \I(f, A_p) \cap \Pn$. Then, $p_n$ is $q$-monotone on $[y_1,1]$, $p_n(\alpha_i)=0$, $2\le i \le q+2$ and $p_n(\alpha_1) = f (\alpha_1) >0$.
 Now, polynomials $\ell_1 := l_{q-1}(\alpha_{q+2}, \dots, \alpha_3) \equiv 0$ and
 $\ell_2:= l_{q-1}(\alpha_{q+2}, \dots, \alpha_4, \alpha_2) \equiv 0$ (in the case $q=1$, $\ell_2:= l_{0}(\alpha_2) \equiv 0$)
 interpolate $p_n$ at $q$ points in $[y_1,1]$, and so it follows from \ineq{bullen} that $p_n(x) = p_n(x) - \ell_1(x) \ge 0$, for $x \ge \alpha_3$, and
 $-p_n(x) = (-1) \left( p_n(x) - \ell_2(x) \right) \ge 0$, for $\alpha_4 \le x \le \alpha_2$. Therefore, $p_n(x) =0$, $\alpha_3 \le x \le \alpha_2$, and so $p_n \equiv 0$. Hence, $p_n(\alpha_1)  >0$ does not hold.
 This contradiction implies that such a polynomial $p_n$ does not exist.
\end{proof}

We remark that, if the number of points $p$ in $A_p \in \A_p$ is sufficiently small, then $q$-monotone approximation with interpolatory constraints may be possible, but it may or may not be as good as $q$-monotone approximation without  interpolation at the points in $A_p$. For example, if $q\ge 1$, then
 denoting
\[
 E_n^{(q)}(f, Y_s; A_p) := \inf \left\{ \norm{f-p_n}{} \st    p_n\in\Delta^{(q)}(Y_s) \cap \I (f, A_p) \cap \Pn \right\} ,
\]
we have
\[
E_n^{(q)}(f, Y_s; A_1) \le 2 E_n^{(q)}(f, Y_s; \emptyset), \quad \text{for any $A_1 = \{\alpha\} \subset I$,}
\]
and
\[
E_n^{(q)}(f, Y_s; A_{q+1}) = 1, \quad \text{if $Y_s \subset [-1,0)$, $f(x) = x_+^q \in \Delta^{(q)}(Y_s)$ and $A_{q+1} \subset (y_1,0)$.}
\]

\subsection{Positive, copositive, onesided and intertwining approximation with interpolatory constraints: definitions and a motivational example} \label{sectionpc}

Note that the case for $q=0$ (copositive approximation) is not covered by \lem{lem1}. In fact, as shown below, (co)positive polynomial approximation requiring interpolation on general sets $A_p$ is always
 possible. However,  its rate of approximation is often worse than the rate of (co)positive approximation  without these additional interpolatory constraints.

\medskip

Another type of shape preserving approximation that is related to (co)positive approximation is the so-called onesided and, more generally, {\em intertwining} approximation.
We say (see \cite{hky}; as far as we know, this is where intertwining approximation was first introduced and discussed) that $\{P,Q\}$ is an {\em intertwining pair} of functions (polynomials, splines, etc.) {\em for $f$ with respect to
$Y_s\in\Y_s$}   if
\[
P-f, f-Q\in\ds .
\]
 In particular, in the case $s=0$, we have $Q\le f \le P$ on $I$, and approximation of $f$ by $P$ and $Q$ is usually referred to as {\em onesided} approximation.


The error of intertwining/onesided polynomial approximation is usually defined as the infimum of $\norm{P-Q}{}$ taken over all intertwining pairs of polynomials for $f$. In this paper, we also work with an equivalent quantity (and equivalent corresponding pointwise estimates)
\[
\intertw := \inf \left\{\|f-P\|  \st   P \in\Pn \cap \td  \right\}  ,
\]
where
\[
\td := \left\{ P: I\mapsto\R  \st P-f \in \ds \right\}.
\]
The error of onesided approximation is denoted by  $\one := \widetilde{E}_n(f, Y_0)$.

Estimates for intertwining/onesided approximation immediately imply those for copositive/positive approximation since, if $f \in \Delta^{(0)}(Y_s)$ and $P\in \td$, then
$P = (P-f)+f \in \Delta^{(0)}(Y_s)$.
However, as we show below, there are many cases when copositive (polynomial) approximation (with or without interpolatory constraints) is possible while intertwining   approximation is not.

 \medskip

It is well known that the `correct' pointwise estimates for polynomial approximation need to involve the quantity $\rho_n(x)$ since, without it, matching inverse results would not be possible.
Namely, the classical Timan-Dzyadyk-Freud-Brudnyi direct theorem for the approximation by algebraic polynomials can be stated as follows (see \eg  \cites{kls21} for discussions):
\begin{quote}
if $k\in\N$, $r\in\N_0$ and $f\in C^r$, then for each $n\geq k+r-1$ there is a polynomial $p_n\in\Pn$ satisfying
\be \label{classdir}
|f(x)-p_n(x)| \le c(k,r) \rho_n^r(x) \w_k(f^{(r)}, \rho_n(x)) , \quad x\in I .
\ee
\end{quote}

Hence,  estimates of type \ineq{classdir}  is what we are interested in while discussing approximation with interpolatory constraints (see also Section~\ref{point} for discussions of how interpolation  yields further improvement of
these  estimates).

\medskip \noindent
{\bf Illustrative  example:}
 Suppose  that $r\in\N_0$  is fixed, and  two functions $f,g\in C^r$ are   such that $g\in\Delta^{(0)}(I)$ and $f\in \Delta^{(0)}(\{0\})$
(\ie  $g\ge 0$ on $[-1,1]$, and $f\le 0$ on $[-1,0]$ and $f\ge 0$ on $[0,1]$), and recall that, for $0\le m\le r$,     $\I^{(m)}(f, \{\alpha\})$  denotes the set of   functions $h\in C^m$ such that
$h^{(i)}(\alpha)=f^{(i)}(\alpha)$ for all   $0\le i \le m$.

\medskip

We are interested in {\em precise} answers to the following questions:
\begin{enumerate}[\bf Q1.]
\item  For which $m$ does there exist $p_n\in\Pn\cap \I^{(m)}(g, \{1/2\})$  such that $p_n \ge g$ on $I$ and \ineq{classdir} (with $f$ replaced by $g$)  holds? (\emph{onesided approximation})
\item For which $m$ does there exist $p_n\in\Pn\cap \I^{(m)}(g, \{1/2\})$  such that $p_n \ge 0$ on $I$ and \ineq{classdir} (with $f$ replaced by $g$) holds? (\emph{positive approximation})
\item For which $m_1$ and $m_2$  does there exist $p_n\in\Pn\cap \I^{(m_1)}(f, \{1/2\}) \cap \I^{(m_2)}(f, \{0\})$  such that $p_n \le f$ on $[-1,0]$,  $p_n \ge f$ on $[0,1]$,  and \ineq{classdir} holds? (\emph{intertwining approximation})
\item For which $m_1$ and $m_2$  does there exist $p_n\in\Pn\cap \I^{(m_1)}(f, \{1/2\}) \cap \I^{(m_2)}(f, \{0\})$  such that $p_n \in \Delta^{(0)}(Y_s)$    and \ineq{classdir} holds? (\emph{copositive approximation})
\end{enumerate}

The answers are rather unexpected:
\begin{enumerate}[\bf {A}1.]
\item Such $p_n$ does not exist if $r=0$ or $r=1$. If $r\ge 2$ is even, then $p_n$ exists if $m=r-1$ and does not exist if  $m=r$. If $r\ge 3$ is odd, then $p_n$ exists if $m=r-2$ and does not exist if  $m=r-1$.
\item If $r=0$ and $m=0$, then such $p_n$ exists  but only if $k=2$ ($k$ is the order of the modulus of smoothness in \ineq{classdir}). If $k=3$, then such $p_n$ no longer exists.
If $r=1$ and $m=0$, then  such $p_n$ exists  if $k=3$ and does not exist  if $k=4$. If $r=1$ and $m=1$, then such $p_n$ does not exist (for any $k$).
If $r\ge 2$, then the answers are the same as in ``A1''.
\item Such $p_n$ does not exist if $r=0$ or $r=1$. If $r\ge 2$ is even, then such $p_n$ exists if $m_1=r-1$ and $m_2=r-2$, and does not exist if either $m_1=r$ or $m_2=r-1$.
If $r\ge 3$ is odd, then such $p_n$ exists if $m_1=r-2$ and $m_2=r-1$, and does not exist if either $m_1=r-1$ or $m_2=r$.
\item This is the most involved  answer.
\begin{itemize}
\item[$\mathbf{r=0}$:] If $m_1=m_2=0$, then such $p_n$ exists   if $k=2$ and  does not exist  if $k=3$.
\item[$\mathbf{r=1}$:] If $m_1=m_2=0$, then such $p_n$ exists  if $k=3$ and does not exist  if $k=4$. If   $m_1=1$, then such $p_n$ does not exist.
If   $m_1=0$ and $m_2=1$, then such $p_n$ exists   if $k=2$ and does not exist  if $k=3$.
\item[$\mathbf{r=2}$:] If   $m_1=1$ and $m_2=0$, then such $p_n$ exists (for any $k$). However, $m_1=1$ cannot be increased  to $2$. If   $m_1=1$ and $m_2=1$, then such $p_n$ exists  if $k=3$ and does not exist  if $k=4$.
If   $m_2=2$, then such $p_n$ does not exist.
\item[$\mathbf{r\ge 3}$:] The answers are the same as in ``A3''.
\end{itemize}
\end{enumerate}

\subsection{Summary of main results}

We start this section with the summary of
 all known results involving pointwise estimates  for (co)positive,  onesided and intertwining polynomial approximation {\em without} any additional interpolatory restrictions.

\begin{longtable}{|c|c|c|}
 \caption{\mbox{Approximation without  interpolatory constraints}} \label{tablewithout}  \\ \hline
\multicolumn{3}{|c|}{\bf Onesided approximation}\\ \hline
$f\in C$ & $\exists$   $P_n \geq f$:
    $|f(x)-P_n(x)| \leq c \w_k(f,\rho_n(x))$ & \cite{hky}*{Theorem 1} \\
\hline \hline
\multicolumn{3}{|c|}{\bf Positive approximation}\\ \hline
\multirow{2}{*}{$f\in C$} & \multirow{2}{*}{$\exists$  $P_n \geq 0$:   $|f(x)-P_n(x)| \leq c \w_k(f,\rho_n(x))$} & \cite{dz96}*{Theorem 3},  \\
& & or \cite{hky}*{Corollary 2}
\\ \hline \hline
\multicolumn{3}{|c|}{\bf Intertwining approximation}\\ \hline
$f\in C$ & $\widetilde{E}_n(f, Y_s)  \not \leq c \|f\|  $ &  \cite{hky}*{Theorem 13} \\ \hline
$f\in C^1$ &  $\exists$  $ P_n\in\td$:
 $|f(x)-P_n(x)| \leq c \rho_n(x) \w_k(f',\rho_n(x))$ & \cite{hky}*{Theorem 5 } \\ \hline \hline
\multicolumn{3}{|c|}{\bf Copositive approximation}\\ \hline
\multirow{2}{*}{$f\in C$} & $\exists$ $P_n$, copositive with $f$:
  $|f(x)-P_n(x)| \leq c \w_3(f,\rho_n(x))$ & \cite{hky}*{Theorem 7} \\
\cline{2-3}
& $\w_3(f,\rho_n(x))$ cannot be replaced by $\w_4(f,n^{-1})$
                      & Zhou \cites{zhou,zhou-atappl} \\ \hline
\multirow{2}{*}{$f\in C^1$} & $\exists$ $P_n$, copositive with $f$: &  \cite{dz96}*{Theorem 1}, \\
                            &   $|f(x)-P_n(x)| \leq c \rho_n(x) \w_k(f',\rho_n(x))$ & or  \cite{hky}*{Corollary 6} \\
   \hline
\end{longtable}

The main goal of this paper is to investigate what changes in Table~\ref{tablewithout} are needed if we add the requirement that approximating polynomials also interpolate the function and possibly  its derivatives (\ie Hermite interpolation)  at the points in $A_p \in\A_p$.
Additionally, it is clear that any continuous function from $\td$ automatically interpolates $f$ at all points in $Y_s\in\Y_s$ (\ie $\td \subset  \I (f, Y_s)$), and
we also investigate if interpolation of derivatives of $f$  at the points in $Y_s$ is possible as well as its limitations.

It turns out that whether optimal estimates are possible depends not only on  the smoothness of $f$, but also on whether the order of the highest derivative of $f$ is even or odd, and on the location of the interpolation points (\ie on whether $A_p$ contains any points in the  interior of $I$). Hermite interpolation at the points from $Y_s$ is even more involved.

For clarity, we summarize our main results  in the following four tables.

\newcommand{\onesize}{0.55in}
\newcommand{\twosize}{3.5in}
\newcommand{\threesize}{1.8in}

\begin{longtable}{|c|c|c|}
 \caption{Onesided approximation} \label{onetable}  \\ \hline
\multicolumn{3}{|c|}{\bf Onesided  approximation with interpolatory constraints}\\ \hline
$f\in C  $ & not possible in general  &   \lem{osneg} with $r=0$    \\ \hline
 \multirow{4}{*}{$f\in C^1$}  & not possible if $A_p \cap \inter I \ne \emptyset$   &   \lem{osneg}(i) with $r=1$      \\ \cline{2-3}
                              &   $\exists$    $P_n \geq f$ and   $ P_n \in \I (f, \{\pm 1\})$:  &   \thm{thone} with \\
                              & $|f(x)-P_n(x)| \leq c \rho_n(x) \w_k(f',\rho_n(x)) $  &      $(r,m_1,m_3)=(1,-1,0)$   \\   \cline{2-3}
                              &  $\I (f, \{\pm 1\})$  cannot be replaced by  $\I^{(1)} (f, \{\pm 1\})$ & \lem{osneg}(ii) ($r=1$)   \\
  \hline
 \multirow{4}{*}{\parbox[t]{\onesize}{\centering $f\in C^r$, $r\ge 2$ is even}}  &
  \multirow{3}{*}{\parbox[t]{\twosize}{\centering
 $\exists$    $P_n \geq f$ and    $P_n \in \I^{(r-1)}(f, A_p)$:
 $|f(x)-P_n(x)| \leq c \rho_n^r(x) \w_k(f^{(r)},\rho_n(x)) $}} & \thm{thone} with \\
&                                                              &       $(r,m_1,m_3)$      \\
&                                                              &       $=(2j,2j-1,2j-1)$      \\ \cline{2-3}
&  $\I^{(r-1)}(f, A_p)$  cannot be replaced by  $\I^{(r)}(f, A_p)$ & \lem{osneg}
    \\
\hline
 \multirow{8}{*}{\parbox[t]{\onesize}{\centering $f\in C^r$, $r\ge 3$ is odd}}
     &  $\exists$    $P_n \geq f$ and     $ P_n \in  \I^{(r-2)}(f, A_p)$:  &   \multirow{2}{*}{\parbox[t]{\threesize}{\centering follows from a stronger statement below}}    \\
     & $|f(x)-P_n(x)| \leq c \rho_n^r(x) \w_k(f^{(r)},\rho_n(x)) $  &          \\ \cline{2-3}
     &  \parbox[t]{\twosize}{\centering $\I^{(r-2)}(f, A_p)$  cannot be replaced by  $\I^{(r-1)}(f, A_p)$ unless $A_p=\{\pm 1\}$} & \multirow{2}{*}{\parbox[t]{\threesize}{\centering \lem{osneg}(i)}} \\ \cline{2-3}
     &   \multirow{3}{*}{\parbox[t]{\twosize}{\centering
              $\exists$    $P_n \geq f$ and   $ P_n \in  \I^{(r-2)}(f, A_p)\cap \I^{(r-1)}(f, \{\pm 1\})$:
              $|f(x)-P_n(x)| \leq c \rho_n^r(x) \w_k(f^{(r)},\rho_n(x)) $ }} & \thm{thone} with \\
     &                                                                        &   $(r,m_1,m_3)$    \\
     &                                                                        &   $=(2j+1,2j-1,2j)$    \\ \cline{2-3}
     &  $\I^{(r-1)}(f, \{\pm 1\})$  cannot be replaced by  $\I^{(r)}(f, \{\pm 1\})$ & \lem{osneg}(ii)  \\
     \hline
\end{longtable}

\begin{longtable}{|c|c|c|}
 \caption{Positive approximation} \label{postable}  \\ \hline
 \multicolumn{3}{|c|}{\bf Positive  approximation with interpolatory constraints ($f\ge 0$)}\\ \hline
 & $\exists$ $P_n\ge 0$ and $P_n\in \I(f, A_p)$: &    \thm{thpositive} with   \\
$f\in C$ & $|f(x)-P_n(x)| \leq c \w_2(f,\rho_n(x))$ &   $(r,k,m_1,m_3) = (0,2,0,0)$        \\
 \cline{2-3}
   &   $\w_2(f,\rho_n(x))$ cannot be replaced by  $\w_3(f,\rho_n(x))$  &   \lem{lemma23}    \\ \hline
 \multirow{8}{*}{$f\in C^1$} & $\exists$ $P_n\ge 0$ and $P_n\in \I(f, A_p)$: &  \thm{thpositive} with\\
                             & $|f(x)-P_n(x)| \leq c  \rho_n(x)  \w_3(f',\rho_n(x))$  &  $(r,k,m_1,m_3) = (1,3,0,0)$   \\  \cline{2-3}
                             & \parbox[t]{\twosize}{\centering  $\w_3(f',\rho_n(x))$ cannot be replaced by  $\w_4(f',\rho_n(x))$   unless $A_p = \{\pm 1\}$}   & \multirow{2}{*}{\lem{lemma205}} \\  \cline{2-3}
                             &   $\I (f, A_p)$ cannot   be replaced by  $\I^{(1)}(f, A_p)$   &  \lem{lemma28}  \\ \cline{2-3}
                             & $\exists$ $P_n\ge 0$ and $P_n\in \I(f, \{\pm 1\})$: &   \thm{thpositive} with  \\
                             & $|f(x)-P_n(x)| \leq c  \rho_n(x)  \w_k(f',\rho_n(x))$  &    $(r,k,m_1,m_3) = (1,k,-1,0)$       \\  \cline{2-3}
                             &   $\I(f, \{\pm 1\})$  cannot   be replaced by  $\I^{(1)}(f, \{\pm 1\})$   &  \lem{lemma28}    \\ \hline
 \multirow{4}{*}{\parbox[t]{\onesize}{\centering $f\in C^r$, $r\ge 2$ is even}}
                 &  \multirow{3}{*}{\parbox[t]{\twosize}{\centering     $\exists$ $P_n\ge 0$ and $P_n\in \I^{(r-1)}(f, A_p)$:
                              $|f(x)-P_n(x)| \leq c \rho_n^r(x) \w_k(f^{(r)},\rho_n(x)) $}} & \thm{thpositive} with \\
                 &                                                                          &   $(r,k,m_1,m_3)$     \\
                 &                                                                          &   $= (2j,k,2j-1,2j-1)$     \\  \cline{2-3}
                 &   $\I^{(r-1)}(f, A_p)$ cannot   be replaced by  $\I^{(r)}(f, A_p)$   &  \lem{lemma30}  \\ \hline
 \multirow{8}{*}{\parbox[t]{\onesize}{\centering $f\in C^r$, $r\ge 3$ is odd}}
                 &  $\exists$ $P_n\ge 0$ and $P_n\in \I^{(r-2)}(f, A_p)$:  &  \multirow{2}{*}{\parbox[t]{\threesize}{\centering follows from a stronger statement below}}    \\
                 & $|f(x)-P_n(x)| \leq c \rho_n^r(x) \w_k(f^{(r)},\rho_n(x)) $  &                        \\ \cline{2-3}
                 &  \parbox[t]{\twosize}{\centering $\I^{(r-2)}(f, A_p)$  cannot be replaced by  $\I^{(r-1)}(f, A_p)$ unless $A_p=\{\pm 1\}$}  &  \multirow{2}{*}{\lem{lemma50}}   \\ \cline{2-3}
                 &   \multirow{3}{*}{\parbox[t]{\twosize}{\centering  $\exists$  $P_n\ge 0$ and $ P_n \in \I^{(r-2)}(f, A_p)\cap \I^{(r-1)}(f, \{\pm 1\})$:
                                             $|f(x)-P_n(x)| \leq c \rho_n^r(x) \w_k(f^{(r)},\rho_n(x)) $}} & \thm{thpositive} with \\
                 &                                                                                         & $(r,k,m_1,m_3)$ \\
                 &                                                                                         &  $= (2j+1,k,2j-1,2j)$  \\    \cline{2-3}
                 & $ \I^{(r-1)}(f, \{\pm 1\})$ cannot   be replaced by $ \I^{(r)}(f, \{\pm 1\})$ &  \lem{lemma28} \\  \hline
\end{longtable}

\begin{longtable}{|c|c|c|}
 \caption{Intertwining approximation} \label{inttable}  \\ \hline
 %
 \multicolumn{3}{|c|}{\bf Intertwining approximation with interpolatory constraints}\\ \hline
 %
 %
 %
 %
$f\in C  $ & not possible in general  &   \lem{osneg} ($r=0$)    \\ \hline
 \multirow{6}{*}{$f\in C^1$}  & not possible if $A_p \cap \inter I \ne \emptyset$   &   \lem{osneg}(i) ($r=1$)     \\ \cline{2-3}
                              &   \multirow{3}{*}{\parbox[t]{\twosize}{\centering    $\exists$   $ P_n \in\td\cap  \I (f, \{\pm 1\})$:
                              $|f(x)-P_n(x)| \leq c \rho_n(x) \w_k(f',\rho_n(x)) $}} & \thm{th:inti} with  \\
    &                                                                                &  $(r,m_1,m_2, m_3)$ \\
    &                                                                                &  $= (1,-1,0,0)$ \\   \cline{2-3}
                              &  $\I (f, \{\pm 1\})$  cannot be replaced by  $\I^{(1)} (f, \{\pm 1\})$ & \lem{osneg}(ii) ($r=1$) \\ \cline{2-3}
                              &  \parbox[t]{\twosize}{\centering $\td$  cannot be replaced by  $\td\cap \I^{(1)}(f, Y_s)$} & \multirow{2}{*}{\lem{interneg} ($r=1$)}  \\  \hline
 \multirow{5}{*}{\parbox[t]{\onesize}{\centering $f\in C^r$, $r\ge 2$ is even}}  &
 \multirow{3}{*}{\parbox[t]{\twosize}{\centering  $\exists$   $P_n \in\td\cap \I^{(r-1)}(f, A_p) \cap \I^{(r-2)}(f, Y_s)$:
 $|f(x)-P_n(x)| \leq c \rho_n^r(x) \w_k(f^{(r)},\rho_n(x)) $}} & \thm{th:inti} with \\
 &                                                              & $(r,m_1,m_2, m_3)$ \\
 &                                                              & $=(2j,2j-1,2j-2,2j-1)$ \\ \cline{2-3}
&  $\I^{(r-1)}(f, A_p)$  cannot be replaced by  $\I^{(r)}(f, A_p)$ & \lem{osneg}   \\ \cline{2-3}
&  $\I^{(r-2)}(f, Y_s)$  cannot be replaced by  $\I^{(r-1)}(f, Y_s)$ & \lem{interneg}   \\
\hline
 \multirow{9}{*}{\parbox[t]{\onesize}{\centering $f\in C^r$, $r\ge 3$ is odd}}
     &  $\exists$   $ P_n \in\td\cap \I^{(r-2)}(f, A_p)\cap \I^{(r-1)}(f, Y_s)$:  &   \multirow{2}{*}{\parbox[t]{\threesize}{\centering follows from a stronger statement below}} \\
     & $|f(x)-P_n(x)| \leq c \rho_n^r(x) \w_k(f^{(r)},\rho_n(x)) $  &            \\ \cline{2-3}
     &  \parbox[t]{\twosize}{\centering $\I^{(r-2)}(f, A_p)$  cannot be replaced by  $\I^{(r-1)}(f, A_p)$ unless $A_p=\{\pm 1\}$} & \multirow{2}{*}{\parbox[t]{\threesize}{\centering \lem{osneg}(i)}}  \\ \cline{2-3}
     &  \parbox[t]{\twosize}{\centering $\I^{(r-1)}(f, Y_s)$  cannot be replaced by  $\I^{(r)}(f, Y_s)$} & \lem{interneg}  \\ \cline{2-3}
     &
      \multirow{3}{*}{\parbox[t]{\twosize}{\centering    $\exists$   $ P_n \in\td\cap  \I^{(r-2)}(f, A_p)\cap \I^{(r-1)}(f, Y_s)  \cap  \I^{(r-1)}(f, \{\pm 1\})$:
      $|f(x)-P_n(x)| \leq c \rho_n^r(x) \w_k(f^{(r)},\rho_n(x)) $}}   &      \thm{th:inti} with      \\
&                                                                     & $(r,m_1,m_2, m_3)$ \\
&                                                                     & $= (2j+1,2j-1,2j,2j)$ \\ \cline{2-3}
     &  $\I^{(r-1)}(f, \{\pm 1\})$  cannot be replaced by  $\I^{(r)}(f, \{\pm 1\})$ & \lem{osneg}(ii)  \\
     \hline
\end{longtable}

\begin{longtable}{|c|c|c|}
 \caption{Copositive approximation} \label{coptable}  \\ \hline
\multicolumn{3}{|c|}{\bf Copositive approximation with interpolatory constraints ($f\in\ds$)}\\ \hline
 \multirow{4}{*}{$f\in C$}
          & \multirow{3}{*}{\parbox[t]{\twosize}{\centering $\exists$ $P_n\in \ds\cap \I(f, A_p)$:
             $|f(x)-P_n(x)| \leq c \w_2(f,\rho_n(x))$}} & \thm{thcopositive} with \\
          &                                             & $(r,k,m_1,m_2, m_3)$ \\
          &                                             & $=(0,2,0,0,0)$ \\   \cline{2-3}
   &   $\w_2(f,\rho_n(x))$ cannot be replaced by  $\w_3(f,\rho_n(x))$
 &   \lem{lemma2003new}  \\ \hline
 \multirow{15}{*}{$f\in C^1$}
                             & \multirow{3}{*}{\parbox[t]{\twosize}{\centering $\exists$ $P_n\in\ds\cap  \I(f, A_p)$:
                                                                $|f(x)-P_n(x)| \leq c  \rho_n(x)  \w_3(f',\rho_n(x))$}} & \thm{thcopositive} with \\
          &                                                     & $(r,k,m_1,m_2, m_3)$ \\
          &                                                     & $=(1,3,0,0,0)$ \\   \cline{2-3}
                             & \parbox[t]{\twosize}{\centering  $\w_3(f',\rho_n(x))$ cannot be replaced by  $\w_4(f',\rho_n(x))$   unless $A_p = \{\pm 1\}$}   & \multirow{2}{*}{\parbox[t]{\threesize}{\centering \lem{lemma205} }} \\  \cline{2-3}
                             &   $\I (f, A_p)$ cannot   be replaced by  $\I^{(1)}(f, A_p)$   &  \lem{lemma28}  \\ \cline{2-3}
                              & \multirow{3}{*}{\parbox[t]{\twosize}{\centering $\exists$ $P_n\in\ds\cap  \I(f, \{\pm 1\})$:
                                                        $|f(x)-P_n(x)| \leq c  \rho_n(x)  \w_k(f',\rho_n(x))$}} &  \thm{thcopositive} with \\
          &                                                                                                     & $(r,k,m_1,m_2, m_3)$ \\
          &                                                                                                      & $=(1,k,-1,0,0)$ \\   \cline{2-3}
                              &   $\I(f, \{\pm 1\})$  cannot   be replaced by  $\I^{(1)}(f, \{\pm 1\})$   &  \lem{lemma28}    \\ \cline{2-3}
                              &   $\exists$   $ P_n \in\ds\cap  \I^{(1)} (f, Y_s)$:  &  \multirow{2}{*}{\parbox[t]{\threesize}{\centering follows from a stronger statement below}} \\
                              & $|f(x)-P_n(x)| \leq c \rho_n(x) \w_2(f',\rho_n(x)) $  &    \\   \cline{2-3}
                            & \parbox[t]{\twosize}{\centering  $\w_2(f',\rho_n(x))$ cannot be replaced by  $\w_3(f',\rho_n(x))$}   & \lem{lemma40} \\  \cline{2-3}
                              &  \multirow{3}{*}{\parbox[t]{\twosize}{\centering   $\exists$   $ P_n \in\ds\cap  \I(f, A_p) \cap  \I^{(1)} (f, Y_s)$:
                                                                $|f(x)-P_n(x)| \leq c \rho_n(x) \w_2(f',\rho_n(x)) $ }}  &    \thm{thcopositive} with    \\
          &                                                                                                      & $(r,k,m_1,m_2, m_3)$ \\
          &                                                                                                      & $=(1,2,0,1,0)$ \\   \hline
 \multirow{11}{*}{$f\in C^2$}
                                &  \multirow{3}{*}{\parbox[t]{\twosize}{\centering  $\exists$   $P_n \in\ds\cap \I^{(1)}(f, A_p) \cap \I (f, Y_s)$:
                                  $|f(x)-P_n(x)| \leq c \rho_n^2(x) \w_k(f'',\rho_n(x)) $}} & \thm{thcopositive} with    \\
            &                                                                                                      & $(r,k,m_1,m_2, m_3)$ \\
            &                                                                                                      & $=(2,k,1,0,1)$ \\ \cline{2-3}
                                &  $\I^{(1)}(f, A_p)$  cannot be replaced by  $\I^{(2)}(f, A_p)$ & \lem{lemma30}    \\ \cline{2-3}
                               &  $\exists$   $P_n \in\ds \cap \I^{(1)} (f, Y_s)$:  &    \multirow{2}{*}{\parbox[t]{\threesize}{\centering follows from a stronger statement below}}   \\
                                & $|f(x)-P_n(x)| \leq c \rho_n^2(x) \w_3(f'',\rho_n(x)) $  &  \\ \cline{2-3}
                            & \parbox[t]{\twosize}{\centering  $\w_3(f'',\rho_n(x))$ cannot be replaced by  $\w_4(f'',\rho_n(x))$ }   &  \lem{lemmam22}  \\  \cline{2-3}
                            & \parbox[t]{\twosize}{\centering   $\I^{(1)} (f, Y_s)$ cannot be replaced with  $\I^{(2)} (f, Y_s)$}   &  \lem{march21}  \\  \cline{2-3}
                            &   \multirow{3}{*}{\parbox[t]{\twosize}{\centering  $\exists$   $P_n \in\ds\cap \I^{(1)}(f, A_p) \cap \I^{(1)} (f, Y_s)$:
                                $|f(x)-P_n(x)| \leq c \rho_n^2(x) \w_3(f'',\rho_n(x)) $ }} & \thm{thcopositive} with    \\
             &                                                                                                      & $(r,k,m_1,m_2, m_3)$ \\
             &                                                                                                      & $=(2,3,1,1,1)$ \\ \hline
 \multirow{9}{*}{\parbox[t]{\onesize}{\centering $f\in C^r$, $r\ge 3$ is odd}}
     &    $\exists$   $ P_n \in\ds\cap \I^{(r-2)}(f, A_p)\cap \I^{(r-1)}(f, Y_s)$: &  \multirow{2}{*}{\parbox[t]{\threesize}{\centering follows from a stronger statement below}}   \\
     &                       $|f(x)-P_n(x)| \leq c \rho_n^r(x) \w_k(f^{(r)},\rho_n(x)) $  &  \\ \cline{2-3}
     &  \parbox[t]{\twosize}{\centering $\I^{(r-2)}(f, A_p)$  cannot be replaced by  $\I^{(r-1)}(f, A_p)$ unless $A_p=\{\pm 1\}$} & \multirow{2}{*}{\parbox[t]{\threesize}{\centering \lem{lemma50} }}  \\ \cline{2-3}
     &  \parbox[t]{\twosize}{\centering $\I^{(r-1)}(f, Y_s)$  cannot be replaced by  $\I^{(r)}(f, Y_s)$} &  \lem{lemma30ys}   \\ \cline{2-3}
     & \multirow{3}{*}{\parbox[t]{\twosize}{\centering   $\exists$   $ P_n \in\ds\cap  \I^{(r-2)}(f, A_p)\cap \I^{(r-1)}(f, Y_s)  \cap  \I^{(r-1)}(f, \{\pm 1\})$:
                                                         $|f(x)-P_n(x)| \leq c \rho_n^r(x) \w_k(f^{(r)},\rho_n(x)) $}} & \thm{thcopositive} with    \\
              &                                                                                                      & $(r,k,m_1,m_2, m_3)$ \\
              &                                                                                                      & $=(2j+1,k,2j-1,2j,2j)$ \\ \cline{2-3}
     &  $\I^{(r-1)}(f, \{\pm 1\})$  cannot be replaced by  $\I^{(r)}(f, \{\pm 1\})$ & \lem{lemma28}  \\
     \hline
 \multirow{5}{*}{\parbox[t]{\onesize}{\centering $f\in C^r$, $r\ge 4$ is even}}
                         & \multirow{3}{*}{\parbox[t]{\twosize}{\centering   $\exists$   $P_n \in\ds\cap \I^{(r-1)}(f, A_p) \cap \I^{(r-2)}(f, Y_s)$:
                                                                 $|f(x)-P_n(x)| \leq c \rho_n^r(x) \w_k(f^{(r)},\rho_n(x)) $ }} & \thm{thcopositive} with    \\
              &                                                                                                      & $(r,k,m_1,m_2, m_3) = $ \\
              &                                                                                                      & $(2j,k,2j-1,2j-2,2j-1)$ \\ \cline{2-3}
&  $\I^{(r-1)}(f, A_p)$  cannot be replaced by  $\I^{(r)}(f, A_p)$ & \lem{lemma30}    \\ \cline{2-3}
&  $\I^{(r-2)}(f, Y_s)$  cannot be replaced by  $\I^{(r-1)}(f, Y_s)$  &   \parbox[t]{\threesize}{\centering \lem{lemma50ys} }  \\
\hline
\end{longtable}

For $Y_s  \in \Y_s$ and $A_p   \in \A_p$, recalling that it is always assumed that $Y_s \cap A_p = \emptyset$,
it is convenient to denote
\be \label{unionya}
Y_s \cup A_p =: \left\{\beta_i\right\}_{i=1}^{s+p} , \quad \text{with $-1 \le \beta_{s+p} < \beta_{s+p-1} < \dots < \beta_2 < \beta_1 \le 1$,}
\ee
and
\begin{align}   \label{dya}
d(Y_s, A_p) & := \min\left\{ |u-v| \st u,v \in Y_s \cup A_p \cup \{\pm 1\} \andd u\ne v \right\} \\ \nonumber
&=
\pmin\left\{ \beta_{i-1}-\beta_i \st 1\le i \le s+p+1 \right\},
\end{align}
where $\beta_0 := 1$, $\beta_{s+p+1}:= -1$, and
$\pmin(S)$ is the smallest positive number from the set $S$ of nonnegative reals.
In particular, $d(Y_s) := d(Y_s, \emptyset)$ and $d(A_p) := d(\emptyset, A_p)$.

It follows from \rem{remapril26} as well as
Lemmas~\ref{tmplem} and \ref{tmplemder} that, in general, dependence of various constants in theorems below  on $d(Y_s, A_p)$ cannot be removed.

\sect{Main results}

\subsection{Onesided and intertwining approximation with interpolatory constraints}

For convenience, we introduce the following notation:
\[
\intset_{\text{intertwining}} :=  \bigcup_{r\ge 1}   \intset_r ,
\]
where
\begin{align*}
 \intset_{r}   :=    \left\{ (r,m_1, m_2, m_3) \st \right.  & m_1 = r-1, \;  m_2 = r-2, \; m_3 =  r-1 \; \text{if $r$ is even; }  \\
  & \left.  m_1 = r-2, \;  m_2 = r-1, \;  m_3 =  r-1 \; \text{if $r$ is odd} \right\} .
\end{align*}
Hence,
\begin{align}  \label{defintertwining}
\intset_{\text{intertwining}} = &  \left\{ (2j,2j-1, 2j-2, 2j-1) \st j\in\N \right\} \\ \nonumber
& \cup \left\{(2j+1, 2j-1, 2j, 2j) \st j\in\N_0 \right\} .
\end{align}
Note that, when $r=1$,
\[
 \intset_1 := \left\{(1,-1, 0, 0) \right\} ,
\]
which is the only case when any of $m_i$'s can become negative.

 The following is our main result (or rather a collection of results combined in one statement) for intertwining approximation with interpolatory constraints.

\begin{theorem}[intertwining  approximation with interpolatory constraints]\leavevmode   \label{th:inti}%
Let $k,p,s\in\N$,    $Y_s \in\Y_s$, $A_p\in\A_p$, and $(r,m_1,m_2, m_3)\in\intset_{\text{intertwining}}$, 
defined in \ineq{defintertwining}.
Then, for any  $f\in C^r$ and any $n\ge c(d(Y_s, A_p))$, there exists
\[
  P_n \in \Pn\cap \td\cap  \I^{(m_1)}(f, A_p)\cap \I^{(m_2)}(f, Y_s)  \cap  \I^{(m_3)}(f, \{\pm 1\})
\]
 such that
\[
|f(x)-P_n(x)| \leq   c(k,r,s,p) \rho_n^r(x) \w_k(f^{(r)},\rho_n(x)), \qquad x\in I,
\]
where $\I^{(-1)}(f, A_p) := \left\{ h: I \mapsto \R\right\}$, the set of all real valued functions on $I$,  \ie the restriction $P_n\in \I^{(-1)}(f, A_p)$ is vacuous, and so
 there is no requirement that $P_n$ interpolate $f$ at the points in $A_p$  if $m_1=-1$.
\end{theorem}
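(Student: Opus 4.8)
The plan is to build $P_n$ as a controlled perturbation of a classical near-best approximant. First I would invoke the Timan--Dzyadyk--Freud--Brudnyi estimate \ineq{classdir} to obtain $p_n\in\Pn$ with $|f(x)-p_n(x)|\le c\,\Phi_n(x)$, where I abbreviate $\Phi_n(x):=\rho_n^r(x)\,\w_k(f^{(r)},\rho_n(x))$. Writing the sought polynomial as $P_n=p_n+R$ with $R\in\Pn$, the difference $h:=P_n-f=R+(p_n-f)$ must: (a) lie in $\ds$, i.e. satisfy $(-1)^i h\ge 0$ on $[y_{i+1},y_i]$ for $0\le i\le s$; (b) vanish to orders $m_1+1$, $m_2+1$, $m_3+1$ at the points of $A_p$, of $Y_s$, and at $\pm1$, respectively; and (c) obey $|h|\le c\,\Phi_n$. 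Since $|p_n-f|\le c\Phi_n$ already, (c) reduces to $|R|\le c\Phi_n$, and the entire problem becomes the construction of a single correction polynomial $R$ of degree $\le n$ carrying out (a) and (b) while staying of size $\Phi_n$.

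The heart of the construction is a base polynomial $\Psi_n\in\Pn$ realizing the target sign pattern with the correct magnitude: $\sgn\Psi_n(x)=(-1)^i$ on each $[y_{i+1},y_i]$ and $c_1\Phi_n(x)\le|\Psi_n(x)|\le c_2\Phi_n(x)$ away from small neighborhoods of the $y_i$. Such $\Psi_n$ is assembled from pointwise polynomial analogues of the characteristic and Heaviside functions adapted to the weight $\rho_n$ (the building blocks $\tn$, $\zn$, $\ozn$ of this paper), multiplied by factors carrying the prescribed vanishing. The parities recorded in \ineq{defintertwining} are exactly what make this sign pattern compatible with interpolation: at an interior node $\alpha_j\in A_p$, which lies strictly inside one interval $[y_{i+1},y_i]$, the sign of $h$ must \emph{not} change, forcing $h$ to vanish to \emph{even} order $m_1+1$, i.e. $m_1$ odd (indeed $m_1=r-1$ or $r-2$ is odd in both parity cases); at a node $y_i\in Y_s$ the sign of $h$ must \emph{flip}, forcing \emph{odd} order $m_2+1$, i.e. $m_2$ even (indeed $m_2=r-2$ or $r-1$); and at $\pm1$ the one-sided sign requirement is compatible with any order, so there the order is limited only by the smoothness/error budget to $m_3=r-1$.

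Next I would enforce the Hermite data. Because $f\in C^r$, simultaneous-approximation (Taylor/Whitney) estimates bound the jet of $f-p_n$ at each node in terms of $\Phi_n$ and $\w_k(f^{(r)},\cdot)$, so the Taylor coefficients that $R$ must match are themselves of admissible size. I would therefore add to a suitable multiple of $\Psi_n$ a finite sum of \emph{localized} correction polynomials, one per node, each designed to (i) match the residual Taylor jet of $(f-p_n)$ at its own node up to the required order, (ii) vanish to high order at all other nodes, and (iii) be dominated by $c\Phi_n$ near its node and negligible elsewhere. Since there are $s+p+2$ nodes and all vanishing orders are $\le r$, these corrections have bounded complexity; taking $n\ge c(d(Y_s,A_p))$ guarantees both that the localization scale $\rho_n$ is finer than the minimal gap $d(Y_s,A_p)$ and that the degree stays $\le n$. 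Superposing them on the dominant sign term $\Psi_n$ produces $R$, hence $P_n=p_n+R$, satisfying (a)--(c).

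The main obstacle is reconciling the three requirements \emph{simultaneously near the endpoints $\pm1$ and near the $y_i$}, where $\rho_n(x)$ degenerates to order $n^{-2}$. There the required high-order vanishing ($m_3+1=r$ at $\pm1$, and $m_2+1$ at $Y_s$) must coexist with both the prescribed sign of $h$ and the tight bound $|h|\le c\rho_n^r\w_k(f^{(r)},\rho_n)$, which itself collapses like $n^{-2r}\w_k$ at the endpoints. Controlling the localized corrections in this degenerate regime -- keeping their magnitude below $c\Phi_n$ while they still absorb the jet of $f-p_n$ and do not spoil the sign dictated by $\Psi_n$ on the adjacent interval -- is the delicate analytic core, and it is exactly what pins down the sharp thresholds ($m_1$ cannot reach $r$, $m_3$ cannot reach $r$, and the endpoint-only gain encoded by $m_3=r-1$) recorded in the companion negative lemmas.
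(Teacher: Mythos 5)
You take a genuinely different route from the paper, and as written it has a real gap at its center. The paper never corrects a Timan--Dzyadyk--Freud--Brudnyi approximant directly: it first builds an interpolating intertwining \emph{spline} (\thm{splnthm1}, whose local pieces come from \lem{lem32gen} --- one integrates a onesided polynomial approximant of $f^{(r)}$ exactly $r$ times from the node, which is precisely where the parity of $r$ enters --- glued by Beatson's \lem{lem61}), and only then replaces the spline by polynomials via \lem{apprsplines}, which rests on the truncated-power construction of \lem{newlemma}. Your plan collapses these two stages into a single polynomial correction $R=A\Psi_n+\sum_\lambda C_\lambda$ of a classical approximant $p_n$, and the collapse concentrates all of the difficulty at the nodes --- exactly the point you label ``the delicate analytic core'' and then defer rather than resolve. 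Naming the obstacle is not the same as overcoming it; in this problem the entire proof \emph{is} that core.

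Here is the concrete inconsistency. Write $\Phi_n(x):=\rho_n^r(x)\w_k(f^{(r)},\rho_n(x))$ and let $\lambda$ be a node with interpolation order $m$. Since $\Psi_n$ must itself vanish to order $m+1$ at $\lambda$ (otherwise it destroys the jet matching), the sign of $h=R-(f-p_n)$ near $\lambda$ must be enforced by $C_\lambda$ against the Taylor remainder of $f-p_n$ beyond its order-$m$ jet. At distance $d=|x-\lambda|$ the a priori bound on that remainder is of order $(d/\rho_n)^{m+1}\Phi_n$, and the jet terms your localizer must suppress are of order $(d/\rho_n)^{m}\Phi_n$; both exceed the ceiling $c\,\Phi_n$ of your requirement (iii) as soon as $d\gg\rho_n(x)$. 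So ``dominate the remainder'' and ``stay below $c\,\Phi_n$'' are literally incompatible at the level of these bounds. The construction can only succeed by exploiting a finer fact --- that $|f-p_n|\le c\,\Phi_n$ \emph{globally} prevents $(f-p_n)^{(m+1)}$ from keeping a fixed sign of size $\w_k(f^{(r)},\rho_n)$ over intervals much longer than $\rho_n$ --- together with correction kernels having exact two-sided $\psi_j^{\mu}$-type decay and prescribed parities of vanishing at every node; this quantitative interplay is the actual content of \lem{newlemma} (note the factors $(x-y)^{2r+1}/|x_j-y|^{2r+1}$, $(x-\alpha)^{2r+2}/|x_j-\alpha|^{2r+2}$ and the kernel $t_j$ there), and nothing in your sketch supplies a substitute for it. Two further inaccuracies: the existence of a sign-pattern polynomial $\Psi_n$ with \emph{two-sided} comparability $c_1\Phi_n\le|\Psi_n|\le c_2\Phi_n$ is itself a nontrivial unproved claim, since the weight involves the modulus $\w_k(f^{(r)},\rho_n(x))$ and not merely a power of $\rho_n$; and $\tn$, $\zn$, $\ozn$ are not polynomial ``building blocks'' --- in this paper they denote the Chebyshev partition and knot sequences, so the objects you propose to assemble $\Psi_n$ from do not exist in the text.
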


The results for onesided approximation are similar and could have been included in the statement of \thm{th:inti} by allowing $s$ to be $0$ and making the restriction
 $P_n\in \I^{(m_2)}(f, Y_0)$ vacuous. Nevertheless, in order to avoid possible confusion and for readers' benefit, we provide an explicit statement.

 Let
\begin{align} \label{defonesided}
\intset_{\text{onesided}}   := &  \left\{(r,m_1, m_3) \st  (r,m_1, m_2, m_3) \in \intset_{\text{intertwining}}, \; \text{for some $m_2\ge 0$} \right\}  \\ \nonumber
    =    & \left\{ (2j, 2j-1, 2j-1) \st j\in\N  \right\} \cup   \left\{ (2j+1, 2j-1, 2j) \st j\in\N_0 \right\} .
\end{align}

 \begin{theorem}[onesided  approximation with interpolatory constraints]\leavevmode\label{thone}%
Let $k,p\in\N$,   $A_p\in\A_p$, and $(r,m_1,  m_3)\in\intset_{\text{onesided}}$, defined in \ineq{defonesided}.
Then, for any  $f\in C^r$ and any $n\ge c(d(A_p))$, there exists
\[
  P_n \in  \Pn \cap   \I^{(m_1)}(f, A_p)   \cap  \I^{(m_3)}(f, \{\pm 1\})
\]
 such that $P_n \ge f$ on $I$, and
\[
|f(x)-P_n(x)| \leq   c(k,r, p) \rho_n^r(x) \w_k(f^{(r)},\rho_n(x)), \qquad x\in I.
\]
Here, as in the statement of \thm{th:inti},
 there is no requirement that $P_n$ interpolate $f$ at the points in $A_p$  if $m_1=-1$.
\end{theorem}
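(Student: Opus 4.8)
The plan is to obtain \thm{thone} as the degenerate $s=0$ instance of \thm{th:inti}. Although the latter is stated only for $s\in\N$, its construction specializes directly: when $s=0$ we have $Y_0=\emptyset$, the cone $\Delta^{(0)}(Y_0)$ equals $\Delta^{(0)}(I)$ (the nonnegative functions on $I$), the membership $P_n-f\in\Delta^{(0)}(Y_0)$ reads simply as $P_n\ge f$ on $I$, and the constraint $\I^{(m_2)}(f,Y_0)$ is vacuous. One cannot, however, recover \thm{thone} by \emph{applying} \thm{th:inti} with some $Y_s$, $s\ge1$: there the sign of $P_n-f$ alternates across the points of $Y_s$, and letting an endpoint of $Y_s$ tend to $\pm1$ to suppress a wrong-sign piece destroys the uniform control in $d(Y_s,A_p)$. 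Hence I would reprove the statement directly, following the $s=0$ specialization of the intertwining scheme.

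Concretely, I would proceed in two steps. \textbf{Step 1 (interpolatory near-best approximant).} Using the simultaneous-approximation-and-interpolation form of the Timan--Dzyadyk--Freud--Brudnyi estimate \ineq{classdir} (the interpolatory pointwise estimates of Section~\ref{point}), produce $p_n\in\Pn$ with $p_n\in\I^{(m_1)}(f,A_p)\cap\I^{(m_3)}(f,\{\pm1\})$ and $|f(x)-p_n(x)|\le c\,\rho_n^r(x)\,\w_k(f^{(r)},\rho_n(x))$ for $n\ge c(d(A_p))$. This fixes the interpolation and the rate but not the sign of the error. \textbf{Step 2 (onesided correction).} Set $g:=f-p_n$; by Step 1, $g$ vanishes to order $m_1+1$ at each point of $A_p\cap\inter I$ and to order $m_3+1$ at $\pm1$, while $|g|\le c\,\rho_n^r\,\w_k(f^{(r)},\rho_n)$. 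I would then add a polynomial correction $Q_n\in\Pn$ with $Q_n\ge g$ on $I$, with $Q_n$ vanishing to the same prescribed orders at the constrained points (so that $P_n:=p_n+Q_n$ still lies in $\I^{(m_1)}(f,A_p)\cap\I^{(m_3)}(f,\{\pm1\})$), and with $|Q_n|\le c\,\rho_n^r\,\w_k(f^{(r)},\rho_n)$. Then $P_n\ge f$ and the triangle inequality delivers the asserted estimate.

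The heart of the matter, and the main obstacle, is the construction of $Q_n$ in Step 2, where three demands must be met simultaneously: majorization of the \emph{sign-changing} error $g$, high-order vanishing at the nodes, and preservation of the pointwise rate. Near an interior node $\alpha$, both $g$ and $Q_n$ behave like constant multiples of $(x-\alpha)^{m_1+1}$, so the inequality $Q_n-g\ge0$ on \emph{both} sides of $\alpha$ forces the exponent $m_1+1$ to be even; locally this amounts to the single scalar condition $Q_n^{(m_1+1)}(\alpha)\ge g^{(m_1+1)}(\alpha)$, which is achievable since $m_1+1\le r$ controls $g^{(m_1+1)}(\alpha)$. This is exactly what $\intset_{\text{onesided}}$ guarantees: $m_1=r-1$ for even $r$ and $m_1=r-2$ for odd $r$, i.e.\ $m_1$ is always the largest odd integer $\le r-1$ (with the convention that $m_1=-1$ makes the interior constraint vacuous). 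Were $m_1+1$ odd, majorizing $g$ on both sides would force $Q_n$ to agree with $g$ to one further order, i.e.\ to interpolate $f^{(m_1+1)}$, which is impossible at the given smoothness — this is the content of the negative companion \lem{osneg}. At $\pm1$ only one side is constrained, so an odd-order zero is admissible, which is why $m_3=r-1$ is allowed in both parities: the zero of $P_n-f$ there has order $m_3+1=r$, with no obstruction whether $r$ is even or odd.

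Finally, I would build $Q_n$ from localized nonnegative polynomial kernels (e.g.\ of Dzyadyk type) multiplied by the even powers $(x-\alpha_i)^{m_1+1}$ at the interior nodes and by the appropriate endpoint factors, assembling a majorant of size at most $c\,\rho_n^r(x)\,\w_k(f^{(r)},\rho_n(x))$ whose pieces are each localized, at the scale $\rho_n$, near one subinterval. Ensuring that these local majorants do not interfere, so that the global inequality $Q_n\ge g$ holds with a constant independent of $n$, is precisely what requires $n\ge c(d(A_p))$: the nodes of $A_p\cup\{\pm1\}$ must be resolved at the scale $\rho_n$. Verifying these kernel estimates, together with the bookkeeping that keeps all constants dependent only on $k,r,p$ and $d(A_p)$, is the routine but laborious remainder.
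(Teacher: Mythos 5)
Your route is genuinely different from the paper's. The paper obtains \thm{thone} as the $s=0$ case of a two-stage scheme: first it builds an interpolatory onesided \emph{spline} (\thm{splnthm1}), whose polynomial pieces near the nodes come from \lem{lem32gen} --- there one integrates $r$ times a polynomial majorant of $f^{(r)}$, and the sign $(-1)^r$ of the Taylor kernel to the left of the node is precisely where the parity dichotomy of $\intset_{\text{onesided}}$ enters --- with Whitney pieces in between, blended smoothly via Beatson's lemma (\lem{lem61}); second, it approximates this spline by polynomials preserving both the onesidedness and the Hermite interpolation (\lem{apprsplines}, which rests on the truncated-power construction of \lem{newlemma}), and concludes with the estimates of \cite{hky}*{Section 3}. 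You instead propose approximate-then-correct: an unconstrained interpolatory near-best $p_n$, followed by a nonnegative polynomial majorant $Q_n \ge f-p_n$ vanishing to order $m_1+1$ at the interior nodes and to order $r$ at $\pm 1$. Your parity analysis (evenness of $m_1+1$ at interior nodes, no parity constraint at the endpoints) is the correct structural insight and is consistent with \lem{osneg}. If completed, your argument is more direct for the onesided case; the paper's detour through splines buys a uniform treatment of the onesided, intertwining and copositive cases, plus the spline theorems as results of independent interest.

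Two of your steps, however, are asserted rather than proved, and they carry essentially all of the content. First, Step 1 is misattributed: Section~\ref{point} (\thm{maingen}) is the \emph{converse} implication --- it assumes a polynomial that already interpolates and satisfies \ineq{classKK} and deduces improved estimates; it does not produce an interpolatory near-best approximant. Such a $p_n$ does exist, but a plain Hermite correction of a TDFB approximant destroys the pointwise rate: near $\pm 1$ the target is $\rho_n^r(\pm1)\w_k(f^{(r)},\rho_n(\pm1)) \sim n^{-2r}\w_k(f^{(r)},n^{-2})$, far smaller than the interior interpolation errors being corrected, so the correction itself already needs localized Dzyadyk-type kernels with prescribed zeros --- the very technology of your Step 2 --- and in addition you need simultaneous approximation of derivatives to control $g^{(i)}(\alpha_j)$, which plain \ineq{classdir} does not give. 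Second, Step 2 is not ``routine'': to even formulate the requirement on $Q_n$ you need a \emph{multi-node} improved estimate for $g$ (per-node \thm{maingen} combined with the separation hypothesis $n\ge c(d(A_p))$), and then kernels that are simultaneously localized at scale $\rho_n$, nonnegative, of the correct pointwise size, and vanishing to the prescribed even orders at \emph{every} node, including the interference estimates for kernels attached to intervals close to a node. This is, nearly verbatim, the paper's \lem{newlemma}: the factors $t_j(x)^{\mu'}$ times normalized powers $(x-\alpha)^{2r+2}/|x_j-\alpha|^{2r+2}$, and the bound $|x-\alpha|/|x_j-\alpha|\le 4\psi_j(x)^{-1}$. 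So your plan can in principle be completed, but only by rebuilding the paper's hardest lemma inside Step 2; as written, the proposal defers exactly that step.
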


It follows from Lemmas~\ref{osneg} and \ref{interneg}
 that Theorems~\ref{th:inti} and \ref{thone}  cannot be strengthened by increasing any of the parameters $m_i$'s (for  any $r\in\N$), or by including the case for  $r=0$.

\subsection{Positive and copositive approximation with interpolatory constraints}

Similarly to the previous section, we start with introducing the   notation for the set of indices $\copset_{\text{copositive}}$. However, in order to capture all statements in Table~\ref{coptable}, this notation needs to be more complicated and involve the order of the modulus $k$. We denote
\[
\copset_{\text{copositive}} :=  \bigcup_{r\ge 0}   \copset_{r} ,
\]
where
\begin{align*}
\copset_{0} & := \{ (0,2, 0, 0, 0) \} , \\
 \copset_{1} &  := \{ (1,3, 0, 0, 0), (1,2,0, 1, 0) \}    \cup \left\{ (1,k,-1, 0, 0) \st k\in\N \right\}             , \\
 \copset_{2}  & :=  \{ (2,3,1, 1, 1) \}  \cup \left\{ (2,k, 1, 0, 1)\st k\in\N \right\} ,
\end{align*}
and, for $r\ge 3$,
\begin{align*}
 \copset_{r}     :=   \left\{ (r,k, m_1, m_2, m_3) \st \right. & \left.  k\in\N \andd  (r,  m_1, m_2, m_3)\in \intset_r \right\} \\
=    \left\{ (r,k, m_1, m_2, m_3) \st \right.  & k\in\N,  \;\; \text{and}\;\; m_1 = r-1, \;  m_2 = r-2, \; m_3 =  r-1 \; \text{if $r$ is even; }  \\
  & \left.  m_1 = r-2, \;  m_2 = r-1, \;  m_3 =  r-1 \; \text{if $r$ is odd} \right\} .
\end{align*}
Hence,
\begin{align} \label{defcopositive}
\copset_{\text{copositive}} = & \left\{ (0,2, 0, 0, 0), (1,3, 0, 0, 0), (1,2,0, 1, 0), (2,3,1, 1, 1) \right\} \\ \nonumber
& \cup
 \left\{ (2j,k, 2j-1, 2j-2, 2j-1) \st j,k\in\N \right\} \\ \nonumber
 &  \cup \left\{(2j+1, k, 2j-1, 2j, 2j) \st j\in\N_0,  \; k\in\N\right\} .
\end{align}

\begin{theorem}[copositive   approximation with interpolatory constraints]\leavevmode\label{thcopositive}%
Let $p,s\in\N$,    $Y_s \in\Y_s$, $A_p\in\A_p$, and $(r,k,m_1,m_2, m_3)\in\copset_{\text{copositive}}$, defined in \ineq{defcopositive}.
Then, for any  $f\in C^r\cap  \Delta^{(0)}(Y_s)$ and any $n\ge c(d(Y_s, A_p))$, there exists
\[
  P_n \in \Pn\cap \Delta^{(0)}(Y_s) \cap  \I^{(m_1)}(f, A_p)\cap \I^{(m_2)}(f, Y_s)  \cap  \I^{(m_3)}(f, \{\pm 1\})
\]
 such that
\[
|f(x)-P_n(x)| \leq   c(k,r,s,p) \rho_n^r(x) \w_k(f^{(r)},\rho_n(x)), \qquad x\in I.
\]
Here, as in the statement  of \thm{th:inti},
 there is no requirement that $P_n$ interpolate $f$ at the points in $A_p$  if $m_1=-1$.
\end{theorem}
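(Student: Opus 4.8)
The plan is to separate $\copset_{\text{copositive}}$ (defined in \ineq{defcopositive}) into two generic families and four exceptional tuples, dispatch the generic families by reduction to the already-proved intertwining result, and treat the four exceptional tuples by a direct construction.

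For the generic families $\{(2j,k,2j-1,2j-2,2j-1)\}$ and $\{(2j+1,k,2j-1,2j,2j)\}$, deleting the modulus index $k$ reproduces exactly the elements of $\intset_{\text{intertwining}}$ (compare \ineq{defcopositive} with \ineq{defintertwining}). Since $\ds=\Delta^{(0)}(Y_s)$ is a convex cone that is closed under addition and $f\in\Delta^{(0)}(Y_s)$ by hypothesis, every $P_n\in\td$ satisfies $P_n=(P_n-f)+f\in\Delta^{(0)}(Y_s)$. Consequently \thm{th:inti}, applied with the matching $(r,m_1,m_2,m_3)$ and the given $k$, already delivers a polynomial that is copositive with $f$, carries all the prescribed Hermite data at $A_p$, $Y_s$ and $\pm1$, and satisfies the required pointwise estimate, with the constants and the threshold $n\ge c(d(Y_s,A_p))$ inherited verbatim. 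In particular this covers every $r\ge 3$, the $r=2$ family $(2,k,1,0,1)$, and the $r=1$ family $(1,k,-1,0,0)$.

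What is left are the four tuples $(0,2,0,0,0)$, $(1,3,0,0,0)$, $(1,2,0,1,0)$ and $(2,3,1,1,1)$, for which copositivity---being strictly weaker than membership in $\td$---permits more interpolation than \thm{th:inti} can give: $(1,3,0,0,0)$ raises $m_1$ from $-1$ to $0$, whereas $(1,2,0,1,0)$ and $(2,3,1,1,1)$ attain $m_2=1$ (derivative interpolation at $Y_s$). The compensating restriction, pinning $k$ to $2$ or $3$, is forced---it is exactly the content of the ``cannot be replaced'' entries of Table~\ref{coptable}. For these I would argue directly along the usual two-step scheme: first build a near-best copositive approximant to $f$ carrying the correct sign pattern and the appropriate low-order modulus, then correct it so that the Hermite data on $A_p\cup Y_s\cup\{\pm1\}$ are met exactly while the corrected polynomial stays in the cone $\Delta^{(0)}(Y_s)$.

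The step I expect to be hardest is the derivative matching at the interior points $y_i$ in the two $m_2=1$ tuples, performed without leaving the copositive cone. Here $f\in C^r\cap\Delta^{(0)}(Y_s)$ forces $f(y_i)=0$ together with $(-1)^if\ge 0$ on $[y_{i+1},y_i]$ and $(-1)^{i-1}f\ge 0$ on $[y_i,y_{i-1}]$, so $f'(y_i)$ has the sign $(-1)^{i+1}$ and its value is therefore already compatible with the required transversal sign change; the difficulty is purely quantitative. Near each $y_i$ I would take the correction to be $(x-y_i)$ times a nonnegative polynomial sharply localized on the scale $\rho_n(y_i)$ and normalized to supply the missing slope, so that it is negligible on the neighbouring subintervals and leaves the interpolation at $A_p$ and $\pm1$ undisturbed; its sign on $[y_{i+1},y_i]$ and $[y_i,y_{i-1}]$ matches the sign demanded by $\Delta^{(0)}(Y_s)$ precisely because of the forced sign of $f'(y_i)$. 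Summing these local corrections over all $i$, bounding their mutual overlap in terms of the separation $d(Y_s,A_p)$ (whence the hypothesis $n\ge c(d(Y_s,A_p))$), keeping each piece of size $\rho_n^r\,\w_k(f^{(r)},\rho_n)$, and verifying that the global sign pattern survives on every $[y_{i+1},y_i]$ is the bulk of the work; the tuple $(1,3,0,0,0)$ is the analogous but easier correction enforcing values at $A_p$ rather than slopes at $Y_s$, and $(0,2,0,0,0)$ is the degenerate $r=0$ instance with no derivative data to fit.
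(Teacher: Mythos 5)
Your reduction of the two generic families to \thm{th:inti} is correct and coincides with the paper's own route: since $f\in\ds$ and $P_n-f\in\ds$ imply $P_n=(P_n-f)+f\in\ds$, the intertwining theorem with the matching $(r,m_1,m_2,m_3)$ settles every tuple $(2j,k,2j-1,2j-2,2j-1)$ and $(2j+1,k,2j-1,2j,2j)$; the paper performs this very reduction (at the spline level, in the proof of \thm{splcopositive}) and then converts splines to polynomials via \lem{apprsplines}.

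The gap is in the four exceptional tuples, and it is not merely technical. Your correction near $y_i$ is $\delta_i(x-y_i)T_i(x)$ with $T_i\ge 0$ localized, $T_i(y_i)=1$, and $\delta_i$ the \emph{missing} slope, i.e.\ $\delta_i=f'(y_i)-p_n'(y_i)$. You claim this correction respects the cone ``because of the forced sign of $f'(y_i)$'', but its sign is governed by $\delta_i$, not by $f'(y_i)$: copositivity forces $(-1)^{i-1}f'(y_i)\ge 0$ and $(-1)^{i-1}p_n'(y_i)\ge 0$, while their difference can have either sign. Whenever the near-best copositive approximant overshoots the slope, the correction has the wrong sign pattern, and there is no positive mass of $p_n$ near $y_i$ to absorb it: take $f\equiv 0$ near $y_i$ (so $f'(y_i)=0$) and $p_n'(y_i)\neq 0$; then the corrected polynomial is $p_n'(y_i)(x-y_i)\bigl(1-T_i(x)\bigr)+O\bigl((x-y_i)^2\bigr)$, whose first term vanishes to third order at $y_i$ (since $T_i$ peaks there) while the unsigned quadratic term does not, so the required sign change across $y_i$ fails on one side. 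The same objection hits the ``easier'' value correction at an interior $\alpha\in A_p$ when $p_n(\alpha)>f(\alpha)=0$. This is exactly the hard content of these four cases: they are the ones where intertwining is impossible (\lem{interneg}) and where $k$ cannot be raised (Lemmas~\ref{lemma2003new}, \ref{lemma205}, \ref{lemma40}, \ref{lemmam22}), so no soft correction argument can work. The paper instead builds \emph{local} copositive polynomial pieces that carry the Hermite data from the outset --- \lem{lem33} for $(0,2,0,0,0)$, \lem{omega3} for $(1,3,0,0,0)$, \lem{omega2} for $(1,2,0,1,0)$, \lem{march23} for $(2,3,1,1,1)$, each a careful case analysis --- blends them into a copositive spline via Beatson's lemma (\thm{splcopositive}), and only then passes to polynomials via \lem{apprsplines}. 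Your proposal leaves this construction undone, and the correction scheme you sketch in its place would fail.
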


As in the case for intertwining and onesided approximation, the results for positive  approximation are similar to those in the copositive case and could have been included in the statement of \thm{thcopositive} by allowing $s$ to be $0$ and making the restriction
 $P_n\in \I^{(m_2)}(f, Y_0)$ vacuous. Again, we provide an explicit statement for readers' convenience.

Let
\begin{align} \label{defpositive}
\copset_{\text{positive}}   := &  \left\{(r,k, m_1, m_3) \st  (r,k, m_1, m_2, m_3) \in \copset_{\text{copositive}}, \; \text{for some $m_2\ge 0$} \right\}  \\ \nonumber
    =    &
    \left\{ (0,2, 0,  0), (1,2, 0,  0), (1,3, 0,  0),  (2,3,1,   1) \right\}  \\ \nonumber
   &  \cup
    \left\{ (2j, k,  2j-1, 2j-1) \st j,k\in\N  \right\} \\ \nonumber
    &  \cup   \left\{ (2j+1, k,  2j-1, 2j) \st j \in\N_0, \; k\in\N \right\}  .
\end{align}

\begin{theorem}[positive   approximation with interpolatory constraints]\leavevmode\label{thpositive}%
Let $p \in\N$,   $A_p\in\A_p$, and $(r,k,m_1,  m_3)\in\copset_{\text{positive}}$, defined in \ineq{defpositive}.
Then, for any  $f\in C^r$ such that $f\ge 0$ on $I$, and any $n\ge c(d(A_p))$, there exists
\[
  P_n \in \Pn\cap  \I^{(m_1)}(f, A_p)  \cap  \I^{(m_3)}(f, \{\pm 1\})
\]
 such that $P_n \ge 0$ on $I$, and
\[
|f(x)-P_n(x)| \leq   c(k,r,p) \rho_n^r(x) \w_k(f^{(r)},\rho_n(x)), \qquad x\in I.
\]
Here, as in the statements above,
 there is no requirement that $P_n$ interpolate $f$ at the points in $A_p$  if $m_1=-1$.
\end{theorem}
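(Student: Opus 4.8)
The plan is to derive \thm{thpositive} from \thm{thcopositive}, which is the natural strategy since the excerpt explicitly remarks that positive approximation is ``the case $s=0$'' of copositive approximation. The key observation is that positivity on $I=[-1,1]$ is exactly the condition $P_n\in\Delta^{(0)}(Y_0)=\Delta^{(0)}(I)$, so a positive approximant is nothing but a copositive approximant with respect to the empty collection $Y_0=\{\emptyset\}$. Thus, given $(r,k,m_1,m_3)\in\copset_{\text{positive}}$ and $f\in C^r$ with $f\ge 0$ on $I$, I first verify that the corresponding index tuple lies in the copositive index set. By the very definition \ineq{defpositive}, $(r,k,m_1,m_3)\in\copset_{\text{positive}}$ means there exists some $m_2\ge 0$ with $(r,k,m_1,m_2,m_3)\in\copset_{\text{copositive}}$; I fix one such $m_2$.

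The second step is to apply \thm{thcopositive} in the degenerate case $s=0$. I set $Y_0:=\{\emptyset\}$, so that $\Delta^{(0)}(Y_0)=\Delta^{(0)}(I)$ is precisely the cone of nonnegative functions on $I$, and the interpolation constraint $\I^{(m_2)}(f,Y_0)$ becomes vacuous (there are no points $y_i$ at which to interpolate). Since $f\ge 0$ on $I$ is the same as $f\in\Delta^{(0)}(Y_0)=\Delta^{(0)}(I)$, the hypothesis $f\in C^r\cap\Delta^{(0)}(Y_0)$ of \thm{thcopositive} holds. The conclusion then yields a polynomial
\[
P_n\in\Pn\cap\Delta^{(0)}(I)\cap\I^{(m_1)}(f,A_p)\cap\I^{(m_3)}(f,\{\pm 1\})
\]
for every $n\ge c(d(\emptyset,A_p))=c(d(A_p))$, satisfying the pointwise estimate
\[
|f(x)-P_n(x)|\le c(k,r,0,p)\,\rho_n^r(x)\,\w_k(f^{(r)},\rho_n(x)),\qquad x\in I.
\]
Since $P_n\in\Delta^{(0)}(I)$ is exactly $P_n\ge 0$ on $I$, the required shape and interpolation constraints are met, and absorbing the harmless dependence on $s=0$ into the constant gives $c(k,r,p)$, which is precisely the assertion of \thm{thpositive}.

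The only point requiring a little care, rather than being an obstacle, is checking that the index-set reduction is consistent: I must confirm that for every tuple in $\copset_{\text{positive}}$ as listed in the second and third lines of \ineq{defpositive}, the witnessing $m_2$ genuinely exists in $\copset_{\text{copositive}}$. This is immediate by comparing \ineq{defpositive} with \ineq{defcopositive} term by term: the four sporadic tuples $(0,2,0,0)$, $(1,2,0,0)$, $(1,3,0,0)$, $(2,3,1,1)$ arise by dropping $m_2$ from $(0,2,0,0,0)$, $(1,2,0,1,0)$, $(1,3,0,0,0)$, $(2,3,1,1,1)$ respectively, and the two parametrized families match after deleting the $m_2$-coordinate. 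Hence no separate argument is needed, and the proof is a direct specialization. In fact this is exactly why the paper states, immediately before \thm{thpositive}, that the positive results ``could have been included in the statement of \thm{thcopositive} by allowing $s$ to be $0$ and making the restriction $P_n\in\I^{(m_2)}(f,Y_0)$ vacuous'' — the explicit statement is provided purely for the reader's convenience, and its proof is the one-line reduction just described.
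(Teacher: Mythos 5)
There is a genuine gap, and it is one of circularity rather than of computation. \thm{thcopositive}, as stated, hypothesizes $p,s\in\N$ and $Y_s\in\Y_s$, where $\Y_s$ for $s\in\N$ consists of \emph{nonempty} collections $-1<y_s<\dots<y_1<1$; the case $s=0$ is simply not within the scope of that theorem's statement. The sentence in the paper that you quote --- that the positive results ``could have been included in the statement of \thm{thcopositive} by allowing $s$ to be $0$'' --- describes how the theorem \emph{might have been formulated}, not what it actually asserts. The assertion ``\thm{thcopositive} remains valid when $s=0$, with $\Delta^{(0)}(Y_0)=\Delta^{(0)}(I)$ and $\I^{(m_2)}(f,Y_0)$ vacuous'' is, after your (correct) check that the index sets \ineq{defpositive} and \ineq{defcopositive} match under deletion of the $m_2$-coordinate, precisely \thm{thpositive} itself. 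So your argument derives the statement from itself: you never examine whether the \emph{proof} of \thm{thcopositive} survives the passage to $s=0$, which is the only nontrivial content. Note also that one cannot dodge this by embedding $f\ge 0$ into a genuine $\Delta^{(0)}(Y_s)$ with $s\ge 1$, since membership in $\Delta^{(0)}(Y_s)$ forces $f\le 0$ on the negative intervals, which a nonnegative $f$ satisfies only if it vanishes there.

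The repair is to descend one level, which is exactly what the paper does: both \thm{thcopositive} and \thm{thpositive} are proved \emph{in parallel} from \lem{apprsplines} together with \thm{splcopositive}, using the sequence of estimates of \cite{hky}*{Section 3}. The crucial point is that \thm{splcopositive} (and likewise \lem{apprsplines}) is stated for $s\in\N_0$, so the spline-level machinery covers the positive case $s=0$ directly --- there the spline $S$ lies in $\Sigma_{k+r}(\zn)\cap C^{k+r-2}\cap\Delta^{(0)}(I)\cap\I^{(m_1)}(f,A_p)\cap\I^{(m_3)}(f,\{\pm 1\})$, and \lem{apprsplines} with $s=0$ produces the interpolating polynomial majorant/minorant pair from which $P_n\ge 0$ with the pointwise estimate follows. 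Your proposal would become correct if, instead of citing the \emph{statement} of \thm{thcopositive} at $s=0$, you cited \thm{splcopositive} and \lem{apprsplines} at $s=0$ and ran the same concluding estimates; as written, the reduction rests on a case the invoked theorem does not contain.
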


It follows from lemmas in Sections~\ref{section3.2} and  \ref{section3.3}
 that Theorems~\ref{thcopositive} and \ref{thpositive}  cannot be strengthened by increasing any of the parameters $k$ or $m_i$'s, for  any $r\ge 0$.


\sect{Negative results}

Recall that, for functions $g: I\mapsto \R$ with absolutely continuous $(r-1)$st derivatives on $I$, the following (Taylor's formula) holds:
\[
g(x) =  \sum_{i=0}^{r-1}  \frac{1}{i!}  g^{(i)}(a)(x-a)^{i}  +   \frac{1}{(r-1)!} \int_a^x (x-t)^{r-1} g^{(r)}(t) dt , \quad x\in I.
\]
We often use this elementary identity   without explicitly mentioning it.

\subsection{Onesided and intertwining approximation with interpolatory constraints}

 The purpose of the following lemmas is to establish   general negative results  for onesided and intertwining approximation with interpolatory constraints.

\begin{lemma}[local onesided approximation with interpolatory constraints: negative result for $f\in C^r$ with $r\ge 0$] \leavevmode \label{osneg}
Let $r\in\N_0$. Then,
\begin{enumerate}[\rm (i)]
\item  if $\lambda$ is an interior point of $I$ (\ie $\lambda\in (-1,1)$),
then there exists a function $f\in C^r$   such that $f \equiv 0$ on $[-1, \lambda]$, and
\[
\I^{(\tr)}(f, \{\lambda\}) \cap \left\{ g \in C^{r+1}  \st \text{$g  \geq f$ on $(\lambda-\e,\lambda+\e)$,  for some $\e>0$} \right\} = \emptyset ,
\]
where
\[
\tr  :=2 \lfloor r/2 \rfloor =
\begin{cases}
r-1 \,, & \text{if $r$ is odd,} \\
r  \,, & \text{if $r$ is even.}
\end{cases}
\]
\item
 if $\lambda$ is an endpoint of $I$ (\ie $\lambda = \pm 1$), then there exists   $f\in C^r$   such that
\[
\I^{(r)}(f, \{\lambda\}) \cap \left\{ g \in C^{r+1}  \st \text{$g  \geq f$ on $(\lambda-\e,\lambda+\e)\cap I$,  for some $\e>0$} \right\} = \emptyset .
\]
\end{enumerate}
\end{lemma}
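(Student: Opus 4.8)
The plan is to construct, in each case, a single explicit $f$ whose one-sided growth at $\lambda$ is just barely faster than the leading term any admissible $g$ is forced to carry, and to extract the contradiction from the Taylor expansion of $g$ at $\lambda$. The organizing observation is that $m:=\tr+1=2\lfloor r/2\rfloor+1$ is \emph{odd} for every $r$; this is exactly what separates the even and odd cases and explains why $\tr$, rather than $r$, is the sharp interpolation order at an interior point.

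For part (i) I would take $f(x):=(x-\lambda)_+^{r+1/2}$. One checks directly that $f\in C^r(I)$ (its $k$th derivative is a constant multiple of $(x-\lambda)_+^{r+1/2-k}$, continuous for $k\le r$ and unbounded for $k=r+1$), that $f\equiv 0$ on $[-1,\lambda]$, and that $f^{(i)}(\lambda)=0$ for $0\le i\le r$. Hence, if $g\in C^{r+1}\cap\I^{(\tr)}(f,\{\lambda\})$, the interpolatory constraints force $g^{(i)}(\lambda)=0$ for $0\le i\le\tr$, so Peano's form of Taylor's theorem gives $g(x)=\frac{c}{m!}(x-\lambda)^m+o(|x-\lambda|^m)$ as $x\to\lambda$, where $c:=g^{(m)}(\lambda)$. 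Since $m$ is odd, I split into three cases. If $c>0$, then $g(x)<0=f(x)$ just to the left of $\lambda$. If $c<0$, then $g(x)<0<f(x)$ just to the right of $\lambda$. If $c=0$, then $g^{(i)}(\lambda)=0$ for $0\le i\le m$, and since $g\in C^{r+1}$ this forces $g(x)=O(|x-\lambda|^{r+1})$ near $\lambda$, whereas on the right $f(x)=(x-\lambda)^{r+1/2}$ satisfies $f(x)/|x-\lambda|^{r+1}\to\infty$, so again $g(x)<f(x)$. In every case $g\ge f$ fails on each one-sided neighborhood of $\lambda$, so $g$ cannot lie in the stated set, and the intersection is empty.

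Part (ii) is the same idea but simpler, because only one side of the endpoint $\lambda\in\{\pm1\}$ lies in $I$. Taking $f(x):=|x-\lambda|^{r+1/2}$ (that is, $(1-x)^{r+1/2}$ if $\lambda=1$ and $(1+x)^{r+1/2}$ if $\lambda=-1$), the same computation shows $f\in C^r$ with $f^{(i)}(\lambda)=0$ for $0\le i\le r$. Now interpolation \emph{up to order $r$} forces $g^{(i)}(\lambda)=0$ for $0\le i\le r$, whence $g(x)=O(|x-\lambda|^{r+1})$ near $\lambda$, while $f(x)/|x-\lambda|^{r+1}\to\infty$; thus $g<f$ on the part of any neighborhood of $\lambda$ that lies in $I$. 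The absence of a second side is precisely why no sign dichotomy is available and why the sharp threshold here is the full order $r$: with only the conditions through order $\tr<r$ one could choose the free derivative $g^{(r)}(\lambda)$ with a convenient sign and dominate $f$.

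I expect the main subtlety to be the degenerate subcase $c=0$ in part (i) (and the whole of part (ii)), where the forced leading term of $g$ vanishes and the contradiction must come instead from the \emph{next}-order growth of $f$; this is exactly what dictates that $f$ grow faster than $(x-\lambda)^{r+1}$ yet remain $C^r$, forcing the non-integer exponent $r+1/2\in(r,r+1)$. A secondary point worth isolating is the parity bookkeeping, namely verifying that $m=\tr+1$ is odd, which is what makes the left/right sign argument in (i) decisive and pins down $\tr$. If one prefers to avoid fractional powers, the even case admits the cleaner competitor $f=(x-\lambda)_+^{r+1}$, for which the left neighborhood forces $g^{(r+1)}(\lambda)\le 0$ and the right forces $g^{(r+1)}(\lambda)\ge(r+1)!$; the fractional exponent is used only to handle both parities uniformly.
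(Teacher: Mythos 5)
Your proof is correct, and it shares the paper's skeleton: in both arguments the counterexample is a $C^r$ function vanishing on $[-1,\lambda]$ whose $(r+1)$st derivative blows up as $x\to\lambda^+$, the two-sidedness of the neighborhood plus a parity argument forces the first unconstrained derivative of $g$ at $\lambda$ to vanish, and the contradiction then comes from a growth-rate comparison. The execution, however, is genuinely different. The paper integrates $f^{(r)}(x)=-x\ln x$ (so $f^{(r+1)}$ blows up only logarithmically) and works with integral representations of $g-f$: it first shows $g^{(r)}(\lambda)=0$ is forced (automatic when $r$ is even since $\tr=r$; by a two-sided sign argument when $r$ is odd), and then concludes because $g^{(r+1)}(u)+\ln u+1<0$ near $\lambda$, $g^{(r+1)}$ being bounded. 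You instead take $f=(x-\lambda)_+^{r+1/2}$ and argue through the Peano form of Taylor's theorem, with a trichotomy on $c=g^{(\tr+1)}(\lambda)$: the oddness of $\tr+1$ rules out $c\neq 0$ (the exact analogue of the paper's step forcing $g^{(r)}(\lambda)=0$ for odd $r$), and the gap between $g(x)=O(|x-\lambda|^{r+1})$ and $f(x)=|x-\lambda|^{r+1/2}$ rules out $c=0$ (the analogue of the paper's logarithmic blow-up step). Your route is somewhat more elementary --- no integral manipulations --- and one formula handles both parities and, in part (ii), the endpoint case; the paper's log-based construction makes the stronger moral point that the obstruction persists even for an arbitrarily mild (logarithmic) singularity, though that extra strength is not needed for the lemma as stated. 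Your closing remark is also accurate and worth keeping: for even $r$ the polynomial competitor $(x-\lambda)_+^{r+1}$ suffices, but for odd $r$ it genuinely fails, since $g=(x-\lambda)^{r+1}$ then lies in $\I^{(\tr)}(f,\{\lambda\})\cap C^{r+1}$ and dominates $f$; this is precisely why a competitor outside $C^{r+1}$ is unavoidable there.
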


\begin{proof}
If $\lambda$ is an interior point of $I$ then, without loss of generality, we assume that  $\lambda=0$ and
 define
\[
f(x) =     \frac{1}{(r-1)!} \int_0^x (x-t)^{r-1} f^{(r)}(t) dt , \quad \text{where} \quad
f^{(r)}(x) :=
\begin{cases}
-x \ln x , & \text{if $0<x\le 1$,}\\
0,   & \text{if $-1\le x\le 0$.}
\end{cases}
\]
Clearly, $f\in C^r$ and $f \equiv 0$ on $[-1, 0]$.  Now, suppose that, for some $\e>0$, there exists $g \in C^{r+1}$ satisfying $g(x) \geq f(x)$, $x\in (-\e,\e)$, and such that $g^{(i)}(0)=f^{(i)}(0)=0$, $0\le i \le \tr$.
Then, since $\tr \ge r-1$,
\begin{eqnarray*}
g(x)-f(x) &=&  \frac{1}{(r-1)!}\int_0^x (x-t)^{r-1} \left[ g^{(r)}(t) -f^{(r)}(t) \right]  dt .
\end{eqnarray*}

If $r$ is even, then $\tr=r$, and so $g^{(r)}(0) = 0$, and we will now show that this equality also holds if $r$ is odd.
Indeed, for $-\e< x<0$,
\[
0\le g(x)-f(x) = \frac{(-1)^r}{(r-1)!} \int_x^0 (t-x)^{r-1} g^{(r)}(t) dt ,
\]
and so, by continuity of $g^{(r)}$, we must have $(-1)^r g^{(r)}(0) \ge 0$. Similarly, because $g \ge f$ on $(0,\e)$, we should  have  $g^{(r)}(0) \ge f^{(r)}(0)= 0$.  Hence, if  $r$ is odd, this implies that  $g^{(r)}(0) = 0$.

Thus, for $x>0$, we have
\begin{eqnarray} \label{mar11}
g(x)-f(x) &=& \frac{1}{(r-1)!}\int_0^x (x-t)^{r-1} \left[ g^{(r)}(t) +t \ln t \right]  dt  \\ \nonumber
& =&  \frac{1}{(r-1)!}\int_0^x (x-t)^{r-1} \int_0^t \left( g^{(r+1)}(u) + \ln u +1 \right) \, du \, dt .
\end{eqnarray}
 Since $g\in C^{r+1}$ and $\lim_{u\to 0^+}  \ln u = -\infty$, there exists $\delta>0$ such that $g^{(r+1)}(u) + \ln u +1 <0$, for all $0<u<\delta$. Hence,
$g(x)-f(x) <0$, for $0<x<\delta$,
which is a contradiction.

\medskip

If $\lambda$ is an endpoint of $I$, then the same proof as above works with no changes (for example, one can consider $[0,1]$ instead of $I$, $\lambda=0$, and then arrive at a contradiction using \ineq{mar11}).
\end{proof}

The same proof can be used to verify validity of the following lemma that is used for negative results involving  Hermite interpolation at the points in $Y_s$.

\begin{lemma}[local intertwining approximation with interpolation at  $Y_s$: negative result  for $f\in C^r$ with $r\ge 1$] \leavevmode \label{interneg}
Let $r\in\N$
 and suppose that   $\lambda\in (-1,1)$.
Then there exists a function $f\in C^r$      such that $f \equiv 0$ on $[-1, \lambda]$, and
\[
\I^{(\ttr)}(f, \{\lambda\}) \cap \left\{ g \in C^{r+1}  \st \text{$g  \leq f$ on $(\lambda-\e, \lambda]$, and $g  \geq f$ on $[\lambda, \lambda+\e)$,   for some $\e>0$} \right\} = \emptyset ,
\]
where
\[
\ttr  := 2 \lceil  r/2 \rceil -1 =
\begin{cases}
r  \,, & \text{if $r$ is odd,} \\
r-1  \,, & \text{if $r$ is even.}
\end{cases}
\]
\end{lemma}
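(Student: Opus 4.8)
The plan is to reuse, essentially verbatim, the construction and argument from the proof of \lem{osneg}, adapting only the single step that pins down the $r$th derivative at $\lambda$. Without loss of generality I would take $\lambda=0$ and keep the same test function $f$, namely $f^{(r)}(x):=-x\ln x$ for $0<x\le 1$ and $f^{(r)}(x):=0$ for $-1\le x\le 0$, with $f$ recovered by integrating the Taylor remainder from $0$. As before, $f\in C^r$, $f\equiv 0$ on $[-1,0]$, and $f^{(r)}$ is continuous with $f^{(r)}(0)=0$.

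Suppose, toward a contradiction, that some $g\in C^{r+1}$ lies in the stated intersection: $g^{(i)}(0)=0$ for $0\le i\le\ttr$, while $g\le f$ on $(-\e,0]$ and $g\ge f$ on $[0,\e)$ for some $\e>0$. The crux is again to establish $g^{(r)}(0)=0$. When $r$ is odd we have $\ttr=r$, so this is immediate from the interpolation condition. When $r$ is even we only have $\ttr=r-1$, and here I would exploit the intertwining sign pattern exactly as the one-sided pattern was used in \lem{osneg}: since $g^{(i)}(0)=0$ for $0\le i\le r-1$, Taylor's formula gives $g(x)=\frac{(-1)^r}{(r-1)!}\int_x^0 (t-x)^{r-1}g^{(r)}(t)\,dt$ for $x<0$, and $g\le f\equiv 0$ on $(-\e,0]$ together with $(-1)^r=1$ forces $g^{(r)}(0)\le 0$ by continuity of $g^{(r)}$; meanwhile $g\ge f$ on $[0,\e)$ forces $g^{(r)}(0)\ge 0$, since $g^{(r)}(t)-f^{(r)}(t)=g^{(r)}(t)+t\ln t\to g^{(r)}(0)$ as $t\to0^+$. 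Hence $g^{(r)}(0)=0$ in all cases.

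With $g^{(r)}(0)=0$ in hand, the contradiction is produced exactly as in \lem{osneg}: for $x>0$,
\[
g(x)-f(x)=\frac{1}{(r-1)!}\int_0^x (x-t)^{r-1}\int_0^t\bigl(g^{(r+1)}(u)+\ln u+1\bigr)\,du\,dt,
\]
and since $g^{(r+1)}$ is bounded near $0$ while $\ln u\to-\infty$, the inner integrand is negative for all small $u>0$; thus $g(x)-f(x)<0$ for all sufficiently small $x>0$, contradicting $g\ge f$ on $[0,\e)$.

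The only content beyond \lem{osneg} is the bookkeeping of which parity now needs the derivative argument: because $\ttr$ is the parity-swapped counterpart of $\tr$, it is the even-$r$ case (rather than the odd-$r$ case) that requires extracting $g^{(r)}(0)=0$ from the shape constraints, and the intertwining conditions $g\le f$ on the left and $g\ge f$ on the right supply precisely the two inequalities needed to pinch it. I expect no genuine obstacle; the only point demanding care is getting the sign of $(-1)^r$ right in the left-hand Taylor remainder so that the two one-sided bounds on $g^{(r)}(0)$ indeed combine to zero.
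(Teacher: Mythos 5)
Your proposal is correct and is precisely what the paper does: the paper's entire proof of \lem{interneg} is the remark that the proof of \lem{osneg} carries over, and you have spelled out exactly the right adaptation, namely that with $\ttr$ being the parity-swap of $\tr$, it is now the even-$r$ case that needs $g^{(r)}(\lambda)=0$ extracted from the shape constraints, with $g\le f$ on the left and $g\ge f$ on the right supplying the two pinching inequalities. The sign bookkeeping with $(-1)^r$ and the final contradiction via the $\ln u$ blow-up are handled exactly as in the paper.
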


 We remark that in all cases that are not covered by the statements of Lemmas~\ref{osneg} and \ref{interneg}  onesided/intertwining   polynomial approximation with interpolatory constraints is possible (see Theorems~\ref{th:inti} and \ref{thone}).

Note that the only reason for including the property that $f \equiv 0$ on $[-1, \lambda]$ in the statements of
\lem{osneg} (if $\lambda$ is an interior point of $I$)  and \lem{interneg}
 is to indicate  that these lemmas are  still valid if $f$ is assumed to be from the class   $\Delta^{(0)}(Y_s)$ (with $y_1 \le \lambda$). In particular, this is important for the illustrative example in Section~\ref{sectionpc}.

\subsection{Positive and copositive approximation with interpolatory constraints} \label{section3.2}

The following is the first lemma in this paper that provides a negative result for {\em polynomial} approximation with interpolatory constraints. Even though, in some sense, it is less general than \lem{lemma2003new} that follows it, its proof is rather simple and illustrates the main ideas that are used in the proofs of  more complicated negative results below but without the  technical details required there.

\begin{lemma}[positive approximation with interpolatory constraints: negative result for $f\in C$]\leavevmode\label{lemma23}
For any $n\in\N$,  $A>0$ and $\lambda\in I$, there exists $f\in C$ which is nonnegative on $I$ and such that, for any nonnegative on $I$ polynomial $P_n \in \Pn$ satisfying $P_n(\lambda)=f(\lambda)$, we have
\[
\norm{f-P_n}{} > A \w_3(f, 1) .
\]
\end{lemma}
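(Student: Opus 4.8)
The plan is to take $f$ to be the positive part of a steep, slightly lowered parabola, so that $f$ differs from a genuine quadratic only by a small localized bump. The third‑order modulus then only ``sees'' that bump, whereas positivity together with the interpolation condition will force any admissible $P_n$ to disagree with the underlying quadratic by an amount governed by the (large) curvature of the parabola. The whole difficulty is to break the scale invariance of the ratio $\|f-P_n\|/\omega_3(f,1)$ by exploiting the gap between the order‑$3$ modulus and the order‑$2$ curvature of $f$.

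Concretely, fix parameters $c\gg m>0$ (to be chosen in terms of $A$ and $n$), set $w:=\sqrt{m/c}$, $q(x):=c(x-b)^2-m$, and $f:=\max\{q,0\}$, where $b$ is placed so that $\lambda$ lies in the flat set $\{f=0\}=[b-w,b+w]$, strictly interior to it at distance $|\lambda-b|=w/2$ (for an endpoint $\lambda=\pm1$ one instead puts $\lambda$ at the corresponding edge $b\pm w$). Then $f\ge0$, $f(\lambda)=0$, and since $f-q=(m-c(\cdot-b)^2)_+$ is a nonnegative bump of height $m$, we get $\omega_3(f,1)=\omega_3(f-q,1)\le 8\,\|f-q\|=8m$.

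Now let $P_n\ge0$ satisfy $P_n(\lambda)=f(\lambda)=0$, write $\varepsilon:=\|f-P_n\|$ and $g:=P_n-q\in\Pi_{\max\{n,2\}}$. Three elementary facts drive the estimate: (a) outside $[b-w,b+w]$ we have $f=q$, hence $|g|\le\varepsilon$ there; (b) on $[b-w,b+w]$ we have $q\le0$ and $P_n\ge0$, hence $g\ge0$; and (c) since the nonnegative $P_n$ attains its minimum value $0$ at $\lambda$, its derivative there is controlled — $P_n'(\lambda)=0$ for interior $\lambda$, and $P_n'(\lambda)$ has the appropriate sign at an endpoint — so $|g'(\lambda)|=|P_n'(\lambda)-q'(\lambda)|\ge|q'(\lambda)|=2c|\lambda-b|\asymp\sqrt{cm}$. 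Applying Markov's inequality to $g$ gives $\|g\|\ge|g'(\lambda)|/\max\{n,2\}^2\gtrsim\sqrt{cm}/n^2$. If this norm is attained outside the flat interval then $\varepsilon\ge\|g\|$ directly; if it is attained at an interior point $x^\ast$ of $[b-w,b+w]$, then $g(x^\ast)\ge0$ and, since $f(x^\ast)=0$ and $|q(x^\ast)|\le m$, $\varepsilon\ge|P_n(x^\ast)|=|g(x^\ast)+q(x^\ast)|\ge\|g\|-m$. In either case $\varepsilon\ge\sqrt{cm}/n^2-m$.

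Combining with $\omega_3(f,1)\le8m$ yields $\|f-P_n\|/\omega_3(f,1)\ge\tfrac18\big(\sqrt{c/m}\,/n^2-1\big)$, which exceeds $A$ as soon as $c/m$ is chosen large enough (say $\sqrt{c/m}>n^2(8A+1)$), while $w=\sqrt{m/c}$ stays small enough that the flat interval lies inside $I$. The delicate point — and the heart of the argument — is exactly the conversion of the forced derivative mismatch at $\lambda$ into a \emph{uniform} lower bound on $\varepsilon$: by itself the large value of $|g'(\lambda)|$ only bounds $\|g\|$ through Markov, and $g$ could a priori be large precisely on the flat interval where $f$ vanishes; one therefore must carry along the trivial estimate $|q|\le m$ on that interval to absorb the loss. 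Everything else is just bookkeeping with the free parameters. Note that it is the fixed degree $n$, entering through Markov's constant, that powers the obstruction; this is consistent with the absence of any such limitation once interpolation is dropped.
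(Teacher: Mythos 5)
Your proof is correct and follows essentially the same route as the paper's: a truncated steep parabola vanishing at $\lambda$, the observation that $\omega_3$ only sees the small bump (since the quadratic is annihilated), the forced vanishing (or sign) of $P_n'(\lambda)$ coming from positivity plus interpolation, and Markov's inequality at the fixed degree $n$ to convert the derivative mismatch at $\lambda$ into a lower bound on $\|f-P_n\|$. The differences are cosmetic: the paper normalizes the bump height to $1/4$ and runs a proof by contradiction letting the root gap tend to $0$, whereas you keep two parameters and estimate directly; also, your ``delicate'' case analysis (norm of $g$ attained inside versus outside the flat interval) is an unnecessary detour, since $\varepsilon \ge \|g\|-m$ follows at once from the triangle inequality $\|P_n-q\| \le \|P_n-f\|+\|f-q\| \le \varepsilon + m$, which is exactly how the paper argues.
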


\begin{proof}
Let $n\in \N$ and $A>0$ be fixed, and without loss of generality suppose that $\lambda \in[0,1]$.
We now let $\e \in (0,  \lambda/2)$, and pick $f(x) := \max\left\{ (x-\lambda)(x-\lambda+\e)/\e^2, 0 \right\}$ and suppose that $P_n \in \Pn$ is nonnegative on $I$, satisfies  $P_n(\lambda)=f(\lambda) = 0$ (this necessarily implies that $P_n'(\lambda)\le 0$, and actually $P_n'(\lambda)=0$ if $\lambda \ne 1$) and such that
$\norm{f-P_n}{} \le  A \w_3(f, 1)$.
If $Q(x) := (x-\lambda)(x-\lambda+\e)/\e^2$, then
$\norm{f-Q}{} = \norm{Q}{[\lambda-\e,\lambda]} = 1/4$
and
\[
\w_3(f, 1) = \w_3(f-Q, 1) \le 8 \norm{f-Q}{} = 2.
\]
Hence,
\[
\norm{P_n-Q}{} \le   \norm{P_n-f}{} +  \norm{f-Q}{} \le 2A+1/4 ,
\]
and so, by Markov's inequality,
\[
1/\e =  Q'(\lambda) \le   Q'(\lambda)-P_n'(\lambda)  \le \norm{Q'-P_n'}{}  \le (2A+1/4) (\max\{n,2\})^2.
\]
We now get a contradiction by letting $\e \to 0^+$.
\end{proof}

\begin{lemma}[copositive approximation with interpolatory constraints: negative result for $f\in C$]\leavevmode \label{lemma2003new}
For any  $A>1$,  $\lambda\in [0,1]$, $n_0\in\N$, and any positive sequence $(\alpha_n)_{n\in\N}$ converging to $0$, there exist  $n\in\N$,  $n\ge n_0$,  and $f = f_n \in C$  such that
 $f\ge 0$ on $I$,
  $f \equiv 0$ on $[-1,-1/2]$,
 and for  any  polynomial $P_n \in \Pn$ which is nonnegative in some nonempty neighborhood of $\lambda$
  and  satisfies $P_n(\lambda)=f(\lambda)$, we have
\[
\left\| \frac{f-P_n}{  \w_3(f, \rho_n)}\right\|  > A \, \alpha_n  \sqrt{n} .
\]
\end{lemma}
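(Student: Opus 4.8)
My plan is to push the Markov-inequality idea of \lem{lemma23} into the weighted, polynomial setting, the only new features being (a) that the error must be measured against the \emph{localized} modulus $\w_3(f,\rho_n)$ rather than $\w_3(f,1)$, and (b) that $f$ must vanish on $[-1,-1/2]$, which rules out the globally quadratic $\max\{Q,0\}$ used there. Since $\lambda\in[0,1]$ lies to the right of $[-1,-1/2]$, I would take $f=f_n$ to be a \emph{one-sided} truncated quadratic: set $f\equiv 0$ on $[-1,\lambda]$ and $f(x):=(x-\lambda)^2/\e^2+(x-\lambda)/\e$ on $[\lambda,1]$, for a width $\e=\e_n$ to be chosen. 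Then $f\ge 0$, $f(\lambda)=0$, $f\equiv 0$ on $[-1,-1/2]$, and $f$ has a corner at $\lambda$ with right-hand slope $1/\e$. The role of $f$ being a quadratic on $[\lambda,1]$ (and identically $0$ on $[-1,\lambda]$) is that third differences vanish on all windows not straddling $\lambda$, so only the corner contributes; the jump of size $1/\e$ in $f'$ gives $\w_3(f,t)\le c\,t/\e$, and hence $\norm{\w_3(f,\rho_n)}{}\le c\,\rho_n(0)/\e\le c/(n\e)$.

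Next I would assume, toward a contradiction, the pointwise bound $|f-P_n|\le A\alpha_n\sqrt n\,\w_3(f,\rho_n)$ on $I$. From $P_n(\lambda)=f(\lambda)=0$ together with the nonnegativity of $P_n$ near $\lambda$, the point $\lambda$ is a local minimum of $P_n$, so $P_n'(\lambda)\le 0$ (and $=0$ when $\lambda\in\inter I$). Writing $g:=P_n-Q$ with $Q$ the quadratic that agrees with $f$ on $[\lambda,1]$, one has $g'(\lambda)=P_n'(\lambda)-1/\e\le -1/\e$, while the assumed bound controls the error side, $\norm{g}{[\lambda,1]}=\norm{P_n-f}{[\lambda,1]}\le A\alpha_n\sqrt n\,\rho_n(\lambda)/\e$. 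A Markov–Bernstein inequality on $[\lambda,1]$ then bounds $|g'(\lambda)|$ by a constant multiple of $\norm{g}{[\lambda,1]}$; comparing this with $|g'(\lambda)|\ge 1/\e$ produces an inequality that, for a suitable choice of $n\ge n_0$ and of $\e_n$, is violated, and that violation is exactly the conclusion $\norm{(f-P_n)/\w_3(f,\rho_n)}{}>A\alpha_n\sqrt n$.

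The step I expect to be the main obstacle is extracting the \emph{precise} factor $\sqrt n$ uniformly over an arbitrary null sequence $\alpha_n$. The slope defect is anchored at the endpoint $\lambda$ of $[\lambda,1]$, so the crude Markov factor $n^2$ enters and the single-corner argument only yields a blow-up of order $n^{-1}$; worse, even the sharp pointwise Bernstein factor $\rho_n(\lambda)^{-1}\sim n$ is exactly cancelled by the $\rho_n/\e$ scaling of $\w_3$, leaving a ratio of order $1$. Thus a lone corner detected by a one-point derivative estimate cannot by itself deliver the stated rate. Bridging this gap is the technical heart of the proof, and is precisely the ``technical detail'' the paper signals when it contrasts this result with the simpler \lem{lemma23}: one must replace the endpoint estimate by a sharp \emph{localized} Markov–Bernstein inequality centered at the interpolation point, balance the unavoidable proxy defect $\norm{f-Q}{[\lambda-\rho_n,\lambda]}$ (which appears because $f$ vanishes on the left) against the Bernstein gain by tuning $\e_n$ to the correct power of $\rho_n(\lambda)$, and — most likely — iterate the construction over a Chebyshev-type partition so that the many local defects accumulate in an $\ell^2$/quadrature sense to the residual factor $\sqrt n$. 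Verifying that this balance yields exactly $\sqrt n$, while preserving both $f\ge 0$ and $f\equiv 0$ on $[-1,-1/2]$, is where the real work lies.
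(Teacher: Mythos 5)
Your proposal correctly reuses the high-level strategy of \lem{lemma23} (assume the bound, compare $P_n$ with a polynomial proxy $Q$ for $f$, and contradict a derivative estimate at $\lambda$), but the concrete construction you choose cannot prove the lemma, and you say so yourself. With $f\equiv 0$ on $[-1,\lambda]$ and a corner of slope $1/\e$ at $\lambda$, the slope defect is $1/\e$ while the relevant modulus near $\lambda$ is at best of order $\rho_n/\e$ (and of order $\rho_n^2/\e^2$ if $\e\ll\rho_n$), so the ratio of defect to allowed error is at most $\rho_n^{-1}\sim n$, which is exactly what one derivative via Bernstein recovers --- no choice of $\e_n$ leaves a surplus. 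The repairs you then sketch (a ``localized'' Markov--Bernstein inequality, and iterating the construction over a Chebyshev-type partition so that local defects accumulate in an $\ell^2$/quadrature sense) are speculation rather than proof, and they are not what is needed: the paper's argument is a \emph{single-notch} construction, not an accumulation argument.

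The missing idea is a different geometry of the counterexample, which makes the defect \emph{quadratically} larger than the modulus. Keep the quadratic of \lem{lemma23} but unnormalized, $Q(x)=(x-\lambda)(x-\lambda+\e)$, so that $Q(\lambda)=0$ and $Q'(\lambda)=\e$ is \emph{small}; define $f=Q$ on $[-1/4,1]$ except flattened to $0$ on the notch $[\lambda-\e,\lambda]$, and glue $f$ down to $0$ on $[-1,-1/2]$ with a $C^\infty$ cutoff (your inference that the vanishing requirement rules out this shape is wrong --- it only forces the glue function). Then $\norm{f-Q}{[-1/4,1]}=\e^2/4$, whence $\w_3(f,\rho_n(x))\le c\max\{\rho_n^3(x),\e^2\}$, while the interpolation and local nonnegativity force $P_n'(\lambda)\le 0$, so the slope defect is $Q'(\lambda)-P_n'(\lambda)\ge\e$. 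The notch has depth $\e^2$ but slope $\e$: choosing $\e=\rho_n^{3/2}(\lambda)$ balances $\e^2=\rho_n^3(\lambda)$, and Bernstein's factor $n$ applied to the assumed bound gives $\e\le c\,n\cdot A\alpha_n\sqrt{n}\,n^{-3}=cA\alpha_n n^{-3/2}$, contradicting $\e\ge(1-\lambda^2)^{3/4}n^{-3/2}$ once $\alpha_n$ is small --- this quadratic slope-versus-depth relation is precisely where the factor $\sqrt{n}$ comes from. Finally, your argument ignores the case $\lambda=1$, where Bernstein degenerates and Markov at the endpoint loses too much; the paper handles it by Dzyadyk's pointwise inequality (if $|R_n(x)|\le(|x-x_0|+\rho_n(x))^3$ on $I$ then $|R_n'(x_0)|\le c\rho_n^{2}(x_0)$), applied to $R_n=B^{-1}(P_n-Q)$ after verifying the required pointwise majorant on all of $I$, including the part where $f$ was glued to zero. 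Without both of these ingredients the proposal does not close.
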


\begin{proof}
Recall that the classical ``glue function'' is
\[
\tG(x):=  \tG (x; [a,b]) := \frac{G(x; [a,b])}{G(1; [a,b])}    , \quad \text{where} \quad                   G(x; [a,b]):=   \int_{-1}^x g(t; (a+b)/2, (b-a)/2) \, dt ,
\]
\[
g(x; x_0, d) :=
\begin{cases}
\exp(d^2/( (x-x_0)^2-d^2)) , &  x\in (x_0-d, x_0+d) , \\
0 , & \text{otherwise,}
\end{cases}
\]
and $[a,b]\subset [-1,1]$. Then, $\tG\in C^\infty$, $\tG (x) =0$ if $x\le a$,  $\tG (x) =1$ if $x\ge b$, and $0\le \tG (x) \le 1$ if $a\le x\le b$.

Now,   let  $\e := \rho_n^{3/2}(\lambda)$,  $Q(x) := (x-\lambda)(x-\lambda+\e)$,  and define
\[
f(x) :=
\begin{cases}
0 , & x\in [\lambda-\e, \lambda], \\
Q(x)  , & x\in [-1/4,1]\setminus [\lambda-\e, \lambda] ,\\
Q(x) \, \tG (x; [-1/2,-1/4]) , & x\in (-1/2, -1/4), \\
0, & x\in [-1,-1/2] .
\end{cases}
\]
(Here, we assume that $n$ is sufficiently large so that everything is well defined.)

Suppose now that
$P_n \in \Pn$
  is nonnegative near $\lambda$, satisfies  $P_n(\lambda)=f(\lambda) = 0$ (this necessarily implies that $P_n'(\lambda) \le  0$, and actually $P_n'(\lambda)=0$ if $\lambda \ne 1$) and such that
$|f(x)-P_n(x)| \le  A \alpha_n \sqrt{n} \, \w_3(f, \rho_n(x))$, $x\in I$.

Clearly, $f\in C^\infty[-1,-1/8]$, and it is not difficult to see that $\norm{f^{(3)}}{[-1,-1/8]}\le c_0$, for some absolute constant $c_0$ (it is convenient to assume that $c_0\ge 2$). Hence,
\[
\w_3(f, t; [-1,-1/8]) \le  t^3  \norm{f^{(3)}}{[-1,-1/8]} \le c_0 t^3  , \quad t>0.
\]
Since
\[
\norm{f-Q}{[-1/4,1]} = \norm{Q}{[\lambda-\e,\lambda]} = \e^2/4 ,
\]
 we also have
\[
\w_3(f, t; [-1/4,1]) = \w_3(f-Q, t; [-1/4,1]) \le 8 \norm{f-Q}{[-1/4,1]}   = 2 \e^2 .
\]
Hence,
\[ 
\w_3(f, \rho_n(x))  \le  c_0 \max\{ \rho_n^3(x),  \rho_n^3(\lambda) \}, \quad x\in I.
\]
For $x\in [-1/4,1]$, we now have
 \be \label{in1}
 |P_n(x)-Q(x)| \le  |P_n(x)-f(x)|+|f(x)-Q(x)| \le    (c_0 A \alpha_n \sqrt{n}    + 1) \max\{ \rho_n^3(x),  \rho_n^3(\lambda) \}.
 \ee

We now consider two cases.

\medskip

\noindent
{\bf Case 1:} $\lambda < 1$  \\
Since $\rho_n(x) \le 2 n^{-1}$, $x\in I$,
using \ineq{in1} and  Bernstein's inequality on $[-1/4,1]$, we have
\begin{align*}
(1-\lambda^2)^{3/4} n^{-3/2} & \le  \e   =  Q'(\lambda) =   |Q'(\lambda)-P_n'(\lambda)|  \le   (1-\lambda)^{-1/2}(\lambda+1/4)^{-1/2} n \norm{Q-P_n}{[-1/4,1]} \\
&   \le   16  (1-\lambda)^{-1/2}       n^{-2} \left(  c_0 A \alpha_n    \sqrt{n} + 1\right),
\end{align*}
and we arrive at a contradiction by taking $n$ to be sufficiently large.

\medskip

\noindent
{\bf Case 2:} $\lambda = 1$  \\
We need the following Dzyadyk inequality (see \eg   \cite{dzya}*{p. 386} or \cite{kls21}*{Lemma 2.5}):
\begin{quote}
If $m\in\N$,  $x_0\in I$, $R_n\in\Pn$, and
\[
|R_n(x)|\le(|x-x_0|+\rho_n(x))^m,\quad x\in I,
\]
then
\[
|R_n'(x_0)|\le c(m) \rho_n^{m-1}(x_0) .
\]
\end{quote}
We use this result with $m=3$,  $x_0=\lambda=1$  and $R_n = B^{-1}(P_n-Q)$, where $B:=  c_0 A \alpha_n    \sqrt{n} + 1$.
 Suppose that we verified that
\be \label{nts}
|P_n(x)-Q(x)|\le B (|x-1|+\rho_n(x))^3,\quad x\in I.
\ee
Then, by Dzyadyk's inequality,
\[
 n^{-3}  =  \e   =  Q'(1)  \le    Q'(1)-P_n'(1)  \le  c B \rho_n^{2}(1) = c (c_0 A \alpha_n    \sqrt{n} + 1) n^{-4} ,
\]
and we arrive at a contradiction by taking $n$ to be sufficiently large.

It remains to verify \ineq{nts}. Note that \ineq{nts} immediately follows from \ineq{in1} for $x\in [-1/4, 1]$ and, for $x\in [-1,-1/4]$, we have
\[
|Q(x)| \le x(x-1) \le (1-x)^3,
\]
and so
\begin{align*}
|P_n(x)-Q(x)| & \le  |P_n(x)-f(x)|+|f(x)-Q(x)| \le |P_n(x)-f(x)| + |Q(x)| \\
& \le c_0 A \alpha_n \sqrt{n} \rho_n^3(x) + |x-1|^3
\le
B (|x-1|+\rho_n(x))^3 .
\end{align*}
The proof is now complete.
\end{proof}

 \begin{lemma}[copositive approximation with interpolatory constraints: negative result for $f\in C^1$ and $\lambda\ne 1$]\leavevmode \label{lemma205}
 For any  $A>1$,  $\lambda\in [0,1)$, $n_0\in\N$,  and any positive sequence $(\alpha_n)_{n\in\N}$ converging to $0$, there exist  $n\in\N$, $n\ge n_0$,  and $f=f_n \in C^1$  such that
 $f\ge 0$ on $I$,
  $f \equiv 0$ on $[-1,-1/2]$,
 and for any polynomial $P_n \in \Pn$ which is nonnegative on   $(\lambda-1/n, \lambda+1/n)$
  and  satisfies $P_n(\lambda)=f(\lambda)$, we have
\be \label{march6}
\norm{f-P_n}{} > A \,   \alpha_n n^{-2/3}  \w_4(f', n^{-1}) .
\ee
\end{lemma}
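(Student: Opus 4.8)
The plan is to follow the template of \lem{lemma2003new}: exhibit an explicit $f=f_n$ with a localized feature of width $\e$ near $\lambda$, use the two constraints to pin the low-order Taylor data of $P_n$ at $\lambda$, and then play a Bernstein/Dzyadyk inequality at the \emph{interior} point $\lambda$ against a defect estimate, tuning $\e$ as a power of $\rho_n(\lambda)\asymp n^{-1}$ so that the assumed bound collapses to $1\le c\,\alpha_n+o(1)$, which contradicts $\alpha_n\to0$ once $n$ is large. Since $\lambda\ne 1$, only the analogue of Case~1 of \lem{lemma2003new} (the interior Bernstein estimate) is needed; no Dzyadyk endpoint case arises.

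First I would build $f$. Using the glue function $\tG$ exactly as in \lem{lemma2003new}, set $f\equiv0$ on $[-1,-1/2]$ and, on $[-1/4,1]$, let $f$ agree with a fixed low-degree nonnegative polynomial $Q$ off a short interval $[\lambda-\e,\lambda]$ on which $f\equiv0$. In contrast to \lem{lemma2003new}, where $Q$ has \emph{simple} zeros at the ends of the dip and $f$ is merely continuous, the requirement $f\in C^1$ forces $Q$ to vanish to \emph{second} order at $\lambda$ and at $\lambda-\e$; the natural choice is $Q(x)=(x-\lambda)^2(x-\lambda+\e)^2$, which is nonnegative (so $f\ge0$) and glues in a $C^1$ fashion to the flat piece, since both $Q$ and $Q'$ vanish at the endpoints of the dip. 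Then $f(\lambda)=0$, $f'(\lambda)=0$, and $f'$ has corners (jumps of $f''$ of size $\asymp\e^2$) only at $\lambda$ and $\lambda-\e$. As $f'-Q'$ is supported on the dip and $\w_4$ annihilates the cubic $Q'$, one gets $\w_4(f',t)=\w_4(f'-Q',t)\le c\norm{Q'}{[\lambda-\e,\lambda]}\asymp\e^3$ for all $t$, with the sharper local bound $\w_4(f',t)\asymp\e^2 t$ for $t\le\e$; the defect is $\norm{f-Q}{[-1/4,1]}=\norm{Q}{[\lambda-\e,\lambda]}\asymp\e^4$.

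Next I would convert the hypotheses into data at $\lambda$. Because $\lambda$ is interior and $P_n\ge0$ on $(\lambda-1/n,\lambda+1/n)$ with $P_n(\lambda)=f(\lambda)=0$, the point $\lambda$ is an interior minimizer of $P_n$, whence $P_n'(\lambda)=0$ and $P_n''(\lambda)\ge0$; thus $P_n(x)=(x-\lambda)^2T(x)$ with $T\in\Pi_{n-2}$ and $T\ge0$ on that neighbourhood. Writing $R:=P_n-Q$, the facts that $f\equiv Q$ off the dip and $|f-Q|\le\norm{Q}{[\lambda-\e,\lambda]}$ on the dip give $\norm{R}{[-1/4,1]}\le\norm{f-P_n}{}+c\,\e^4$, exactly as in \ineq{in1}, and this is the quantity I would feed into the interior Bernstein estimate on $[-1/4,1]$.

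The hard part is a genuine structural difference from the $C^0$ case. In \lem{lemma2003new} the single pinned condition $P_n'(\lambda)=0$ meets $Q'(\lambda)=\e\ne0$, producing a \emph{pinned, nonzero} first-order gap $R'(\lambda)=-\e$ that Bernstein turns into $\e\le cn\bigl(\norm{f-P_n}{}+\e^2\bigr)$. Here the $C^1$ normalization forces $Q'(\lambda)=0$ to match the pinned $P_n'(\lambda)=0$, so the first- and second-order gaps at $\lambda$ are \emph{not} pinned (only $P_n''(\lambda)\ge0$ is known), and a naive Bernstein bound on $R''$ or $R'''$ merely confirms that $P_n$ reproduces the Taylor data of $Q$. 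The obstruction must therefore be extracted from one-sided curvature: I would exploit $T\ge0$ together with the weighted comparison $(x-\lambda)^2\,|\,T-\widetilde{Q}\,|\le\norm{f-P_n}{}$ on $[\lambda,1]$, where $\widetilde{Q}:=Q/(x-\lambda)^2=(x-\lambda+\e)^2$, and the flat-piece bound $0\le T(x)\le\norm{f-P_n}{}/(x-\lambda)^2$ on the dip, to force a quantitative conflict between the nonnegativity of $T$ near $\lambda$ and its having to track $\widetilde{Q}$ from the right. Concretely, a symmetric second difference of $P_n$ about $\lambda$ (which kills the pinned data and isolates the mismatch between the dip, where $f\equiv0$, and the parabola $Q$), combined with the Bernstein control of the remaining terms, is the route I would take to a lower bound of the form $\norm{f-P_n}{}\ge c\,\e^{a}n^{b}$. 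Matching this against $A\,\alpha_n n^{-2/3}\w_4(f',n^{-1})$ and choosing $\e$ a suitable small power of $\rho_n(\lambda)$ would then leave the contradictory inequality $1\le c\,\alpha_n+o(1)$. Verifying that the one-sided information is strong enough to beat the \emph{natural} rate $\rho_n\w_4(f',\rho_n)$ rather than merely reproduce it, and pinning down the exponents $a,b$ so that the chosen $\e$ delivers exactly the factor $n^{-2/3}$, is the crux of the argument.
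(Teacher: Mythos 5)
Your overall template (localized defect of width $\e$, the glue $\tG$, the forced $P_n'(\lambda)=0$, Bernstein on $[-1/4,1]$ against the defect, contradiction from $\alpha_n\to 0$) is indeed the paper's, and your orders of magnitude $\norm{f-Q}{}\asymp\e^4$, $\w_4(f',n^{-1})\lesssim\e^3$ are right. The gap is in the construction of $f$ itself, and it is exactly the point you flag and leave open. The paper does \emph{not} use a nonnegative $Q$ with a double zero at each end of the dip; it uses $Q(x)=3(x-\lambda)^4-4\e(x-\lambda)^3$, which has a double zero at $\lambda$ but attains the strictly negative value $Q(\lambda+\e)=-\e^4$, and it places the dip on the right of $\lambda$: $f=Q$ for $x<\lambda$, $f=0$ on $[\lambda,\lambda+\e]$, and $f=Q+\e^4$ for $x\ge\lambda+\e$ (still $C^1$ and nonnegative, since $\min Q=Q(\lambda+\e)$). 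With $\e:=n^{-4/3}<1/n$, the point $\lambda+\e$ lies inside $(\lambda,\lambda+1/n)$, where $P_n\ge 0$ is assumed, so $P_n(\lambda+\e)-Q(\lambda+\e)\ge\e^4$ is a \emph{pinned gap of definite size} --- precisely what your construction lacks. Since $P_n-Q$ has vanishing value and first derivative at $\lambda$ (the latter by your interior-minimum remark), this gap equals $\int_\lambda^{\lambda+\e}(\lambda+\e-t)\left(P_n''(t)-Q''(t)\right)dt$, and the Markov--Bernstein bound $\norm{P_n''-Q''}{[\lambda,\lambda+\e]}\le c(1-\lambda)^{-1}n^2\norm{P_n-Q}{[-1/4,1]}$ turns the assumed error bound into $1\le c(1-\lambda)^{-1}\left(c_0A\alpha_n+n^{-2/3}\right)$, a contradiction. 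The exponent $2/3$ is then forced: the argument yields $\norm{f-P_n}{}\gtrsim\e^2n^{-2}$, while $\w_4(f',n^{-1})\lesssim\max\{\e^3,n^{-4}\}$ (dip versus glue region), and the ratio $\e^2n^{-2}/\max\{\e^3,n^{-4}\}$ is maximized, with value $n^{-2/3}$, exactly at $\e=n^{-4/3}$.

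Moreover, no refinement of your ``one-sided curvature'' step can repair the choice $Q=(x-\lambda)^2(x-\lambda+\e)^2$: your $f$ admits admissible competitors that are too good, so the obstruction you hope to extract is simply not there. Let $G$ equal $Q$ on $[-1/4,1]$, glued to zero exactly as you glue $f$; then $G\ge 0$, $G\in C^\infty$ with all derivatives bounded uniformly in $\e$, and $G(\lambda)=G'(\lambda)=0$. If $\e=n^{-\theta}$ with $\theta\ge 1$ (your plan takes $\e$ a power of $\rho_n(\lambda)\asymp n^{-1}$), apply \thm{thpositive} with $(r,k,m_1,m_3)=(2,k,1,1)$, $A_p=\{\lambda\}$ and $k\ge 3\theta$ to $G$: this yields $P_n\ge 0$ on $I$ with $P_n(\lambda)=0$ and $\norm{G-P_n}{}\le c(k)n^{-2-k}$, hence $\norm{f-P_n}{}\le\e^4/16+c(k)n^{-2-k}$; since a fourth difference of step $\e$ straddling the dip shows $\w_4(f',n^{-1})\ge\e^3$, this admissible $P_n$ satisfies $\norm{f-P_n}{}\le Cn^{-1}\w_4(f',n^{-1})$. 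If instead $\theta\le 1$, apply \thm{thpositive} with $(r,k,m_1,m_3)=(1,3,0,0)$ to $f$ itself: the resulting $P_n$ has error at most $cn^{-1}\w_3(f',n^{-1})\le c\e^2n^{-2}\le Cn^{-1}\w_4(f',n^{-1})$, because for $\e\ge n^{-1}$ both $\w_3(f',n^{-1})$ and $\w_4(f',n^{-1})$ are $\asymp\e^2n^{-1}$ (the dip is a kink of $f'$ with slope jump $2\e^2$). So, however you tune $\e$, your $f$ can force the ratio $\norm{f-P_n}{}/\w_4(f',n^{-1})$ no higher than $Cn^{-1}$, whereas the lemma requires forcing it above $A\alpha_n n^{-2/3}$; for any $\alpha_n\to 0$ slower than $n^{-1/3}$ (say $\alpha_n=1/\log n$) this is impossible for all large $n$. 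The negativity of the paper's $Q$ on the side where nonnegativity of $P_n$ is assumed is therefore not a convenience but the entire mechanism; nonnegativity of $f$ is rescued by adding the constant $\e^4$ beyond the dip, not by keeping $Q$ itself nonnegative.
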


We remark that inequality \ineq{march6} implies that
\[
\left\| \frac{f-P_n}{ \rho_n \w_4(f', \rho_n)}\right\|  > c A \,  \alpha_n n^{1/3} ,
\]
where $c>0$ is some  absolute constant.  Also, note that the statement of this lemma is no longer valid if $\lambda=1$, \ie if  interpolation takes place at an endpoint of $I$
(see \thm{thcopositive} with  $(r,k,m_1,m_2, m_3) =(1,k,-1,0,0)$).

\begin{proof}[Proof of \lem{lemma205}]
Suppose that $n\ge 25(1-\lambda)^{-1}$, let   $\e := n^{-4/3}$ and  define
\[
g(x) :=
\begin{cases}
Q(x)   , & \text{if $x<\lambda$,} \\
0, & \text{if $\lambda \le x \le \lambda+\e $,} \\
Q(x)  +\e^4  , & \text{if $x \ge \lambda+\e $.}
\end{cases}
\]
where $Q(x) := 3(x-\lambda)^4-4\e (x-\lambda)^3$.
Note that $g\in C^1$ and
\[
g'(x) =
\begin{cases}
Q'(x)  , & \text{if $x<\lambda $ or $x>\lambda+\e$,} \\
0, & \text{if $\lambda \le x \le \lambda+\e $,}
\end{cases}
\]
with  $Q'(x)  = 12 (x-\lambda)^2 (x-\lambda-\e)$.

Now, define
\[
f (x) :=
\begin{cases}
g(x)  , & x\in [-1/4, 1], \\
g(x) \, \tG (x; [-1/2,-1/4]) , & x\in (-1/2, -1/4), \\
0, & x\in [-1,-1/2] ,
\end{cases}
\]
where $\tG$ is the glue function from the proof of \lem{lemma2003new}.

Suppose that $P_n \in \Pn$ is nonnegative on $(\lambda-1/n, \lambda+1/n)$ , satisfies  $P_n(\lambda)=f(\lambda) = 0$ (this necessarily implies that $P_n'(\lambda)= 0$) and such that
$\norm{f-P_n}{} \le  A \alpha_n  n^{-2/3} \, \w_4(f', 1/n)$.

Clearly, $f\in C^\infty[-1,-1/8]$, and it is not difficult to see that $\norm{f^{(5)}}{[-1,-1/8]}\le c_0$, for some absolute constant $c_0\ge 32$. Hence,
\[
\w_4(f', n^{-1}; [-1,-1/8]) \le   n^{-4} \norm{f^{(5)}}{[-1,-1/8]} \le c_0 n^{-4}.
\]
Since
\[
\norm{f'-Q'}{[-1/4,1]} = \norm{Q'}{[\lambda,\lambda+\e]} \le  2 \e^3 ,
\]
 we also have
\[
\w_4(f', n^{-1}; [-1/4,1]) = \w_4(f'-Q', n^{-1}; [-1/4,1]) \le 16 \norm{f'-Q'}{[-1/4,1]}   \le  32 \e^3 .
\]
Hence,
\[
\w_4(f', n^{-1}) \le  \max\{c_0 n^{-4}, 32\e^3 \} = c_0 n^{-4} ,
\]
and so
\begin{align*}
\norm{P_n-Q}{[-1/4,1]} & \le  \norm{P_n-f}{[-1/4,1]} +  \norm{g-Q}{[-1/4,1]} \\
& \le  A \alpha_n  n^{-2/3} \, \w_4(f', 1/n) +  \max\left\{ \e^4, \norm{Q}{[\lambda,\lambda+\e]} \right\}      \\
 &   \le    n^{-14/3} \left(  c_0 A \alpha_n   +  n^{-2/3} \right) .
\end{align*}
By the Markov-Bernstein  inequality on $[-1/4,1]$,
\[
\norm{P_n'' -Q''}{[\lambda, \lambda+\e]}  \le c   (1-\lambda)^{-1} n^{-8/3} \left(  c_0 A \alpha_n + n^{-2/3} \right) ,
\]
and taking into account that $P_n(\lambda+\e) \ge 0$ and  $P_n^{(i)}(\lambda)=Q^{(i)}(\lambda)=0$, $i=0,1$, we have
\begin{align*}
1 & \le \e^{-4} \left[ P_n(\lambda+\e)-Q(\lambda+\e)\right]  =  \e^{-4}  \int_\lambda^{\lambda+\e} (\lambda+\e -t)  \left[P_n''(t) -Q''(t)\right] dt \\
& \le c \e^{-2}  (1-\lambda)^{-1} n^{-8/3} \left(  c_0 A \alpha_n + n^{-2/3} \right) \\
& \le c (1-\lambda)^{-1}   \left(  c_0 A \alpha_n + n^{-2/3} \right) ,
\end{align*}
and we arrive at a contradiction by taking $n$ to be sufficiently large.
\end{proof}

\begin{lemma}[copositive approximation with interpolatory constraints: negative result for $f\in C^r$ with even $r\ge 2$]\leavevmode \label{lemma30}
If $r\ge 2$ is \underline{even}, then for any $n\ge r$,  $A>0$ and $\lambda\in [0,1]$, there exists $f\in C^r$ which is nonnegative on $I$, is identically $0$ on $[-1,-1/2]$,
and such that, for any nonnegative on $(\lambda-1/n,\lambda)$  polynomial $P_n \in \Pn$ satisfying $P_n^{(i)}(\lambda)=f^{(i)}(\lambda)$, $0\le i \le r$,  we have
\be \label{march8}
\norm{f-P_n}{} > A \norm{f^{(r)}}{} .
\ee
\end{lemma}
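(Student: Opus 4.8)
The plan is to follow the contradiction strategy of \lem{lemma2003new} and \lem{lemma205}: fix $n\ge r$, $A>0$ and $\lambda\in[0,1]$, pick a parameter $\e\in(0,1/n)$ that will be sent to $0$, and construct $f=f_\e$ concentrated in an $O(\e)$--neighbourhood of $\lambda$ (so that $f\equiv0$ on $[-1,-1/2]$ comes for free) having three features: $f\ge0$ and $f\in C^r$; its $r$-th derivative is a one-signed bump with $\norm{f^{(r)}}{}\asymp1$, $f^{(r)}(\lambda)=0$, and $f^{(r)}\ge c\norm{f^{(r)}}{}$ on a subinterval of $(\lambda-\e,\lambda)$ of length $\asymp\e$; and $f$ touches $0$ at some point $x_d\in(\lambda-\e,\lambda)$. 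Writing $T_\lambda$ for the degree-$r$ Taylor polynomial of $f$ at $\lambda$ (recall $r\le n$), I would first record two consequences of the integral form of Taylor's remainder. On the one hand $\norm{f-T_\lambda}{}\le c(r)\norm{f^{(r)}}{}$. On the other hand, evaluating the remainder at $x_d$ and using $f^{(r)}(\lambda)=0$ gives $(f-T_\lambda)(x_d)=\tfrac{1}{(r-1)!}\int_\lambda^{x_d}(x_d-t)^{r-1}f^{(r)}(t)\,dt$, and here the evenness of $r$ is decisive: $r-1$ is odd, so $(x_d-t)^{r-1}$ has a definite sign for $t\in(x_d,\lambda)$, and one gets the \emph{lower} bound $(f-T_\lambda)(x_d)\ge c'\e^r\norm{f^{(r)}}{}$. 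Together with $f(x_d)=0$ this forces the Taylor polynomial to dip below zero, $T_\lambda(x_d)\le-c'\e^r\norm{f^{(r)}}{}$.

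Next I would run the contradiction. Let $P_n$ be any polynomial as in the statement. Since $P_n$ and $T_\lambda$ share the data $f^{(i)}(\lambda)$, $0\le i\le r$, the difference $P_n-T_\lambda$ vanishes to order $r$ at $\lambda$ and $(P_n-T_\lambda)^{(r+1)}=P_n^{(r+1)}$. Because $x_d\in(\lambda-\e,\lambda)\subset(\lambda-1/n,\lambda)$, nonnegativity gives $P_n(x_d)\ge0>T_\lambda(x_d)$, hence $(P_n-T_\lambda)(x_d)\ge|T_\lambda(x_d)|\ge c'\e^r\norm{f^{(r)}}{}$. Expressing $(P_n-T_\lambda)(x_d)$ by its order-$(r+1)$ integral remainder based at $\lambda$, with $|x_d-\lambda|\le\e$, yields
\[
c'\e^r\norm{f^{(r)}}{}\le(P_n-T_\lambda)(x_d)\le\frac{\e^{r+1}}{(r+1)!}\,\bigl\|P_n^{(r+1)}\bigr\|,
\]
so $\norm{P_n^{(r+1)}}{}\ge c''\e^{-1}\norm{f^{(r)}}{}$. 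Finally, Markov's inequality applied $r+1$ times on $I$ (the degree $n$ being fixed) bounds $\norm{P_n^{(r+1)}}{}\le c(n)\norm{P_n-T_\lambda}{}$, whence
\[
\norm{f-P_n}{}\ge\norm{P_n-T_\lambda}{}-\norm{f-T_\lambda}{}\ge\frac{c''}{c(n)}\,\e^{-1}\norm{f^{(r)}}{}-c(r)\norm{f^{(r)}}{}>A\norm{f^{(r)}}{}
\]
as soon as $\e$ is small enough in terms of $n,A,r$. The constants being uniform over admissible $P_n$, a single such $\e$ serves all of them simultaneously, which is exactly the claim.

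The step I expect to be the main obstacle is the construction itself: producing a genuinely nonnegative $f\in C^r$ that is flat to order $r$ at the ends of its support (so the concentration is $C^r$), that touches $0$ at an interior point $x_d$, and whose $r$-th derivative is the prescribed one-signed bump with $f^{(r)}(\lambda)=0$ and $\norm{f^{(r)}}{}\asymp1$; the glue function $\tG$ from the proof of \lem{lemma2003new} should handle the smooth matching. The quantitative heart is the \emph{lower} bound $(f-T_\lambda)(x_d)\gtrsim\e^r\norm{f^{(r)}}{}$ rather than just its matching upper bound — this is what makes the forced dip $T_\lambda(x_d)<0$ deep enough to drive the Markov estimate, and it is precisely where the parity of $r$ enters. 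It is the analogue, at the level of the $r$-th derivative, of the steep-slope mechanism $Q'(\lambda)=1/\e$ used for $r=0$ in \lem{lemma2003new}; the fact that the forced quantity $\norm{P_n^{(r+1)}}{}$ grows one power of $\e$ faster than $\norm{f^{(r)}}{}$ is what produces the unbounded ratio as $\e\to0$. Arranging in addition that $f\in\Delta^{(0)}(Y_s)$, for use in the copositive setting, is then immediate since $f$ is supported near $\lambda\in[0,1]$.
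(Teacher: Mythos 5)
Your proposal is correct and is essentially the paper's own argument: the paper realizes your axiomatized bump explicitly, taking $f^{(r)}(x)=-(x-\lambda)(x-\lambda+\e)/\e^2$ on $(\lambda-\e,\lambda)$ and $0$ elsewhere, so that your Taylor polynomial $T_\lambda$ is exactly the polynomial $Q$ of degree $\le r-1$ with $f=Q$ on $[\lambda,1]$, your ``dip'' is the explicit computation $Q(\lambda-\e)=(-1)^{r-1}\frac{r}{(r+2)!}\e^r<0$ (precisely where evenness of $r$ enters), your touch point is $x_d=\lambda-\e\in(\lambda-1/n,\lambda)$, and the finish is the same order-$(r+1)$ Taylor remainder for $P_n-Q$ at $\lambda$ combined with Markov's inequality, run in the paper as a contradiction (assuming $\norm{f-P_n}{}\le A\norm{f^{(r)}}{}$ to bound $\norm{P_n}{[-1,\lambda]}$ and then letting $\e\to0^+$) rather than via your equivalent direct triangle-inequality chain. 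One cosmetic caveat: your framing of $f$ as compactly supported (``concentrated'') near $\lambda$ is incompatible with a globally one-signed $f^{(r)}$, since such an $f$ vanishing to the left must coincide with the nonzero polynomial $T_\lambda$ on all of $[\lambda,1]$ -- but this is harmless, as your quantitative argument only uses the sign of $f^{(r)}$ on $(x_d,\lambda)$ and the vanishing of $f$ to the left of $x_d$.
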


 \begin{proof}
 Let $n\ge r$ and $A\ge 1$ be fixed, and define $f\in C^r$ as
\[
f(x) =     \frac{1}{(r-1)!} \int_{\lambda-\e}^x (x-t)^{r-1} f^{(r)}(t) dt ,
\]
 where
 \[
f^{(r)}(x) :=
\begin{cases}
0 , & \text{if $x\le \lambda-\e$ or $x\ge \lambda$,} \\
-(x-\lambda)(x-\lambda+\e)/\e^2  , & \text{if $\lambda-\e <  x <  \lambda $,}
\end{cases}
\]
and $\e\in(0,1/(2n))$. Then, $f\equiv 0$ on $[-1,\lambda-\e]$, $f\uparrow$ on $[\lambda-\e, 1]$
(and so $f$ is nonnegative on $I$), $\norm{f^{(r)}}{} = 1/4$ and, for $x\ge \lambda$,
\be  \label{apr29}
f(x)   = Q(x)   :=  \frac{1}{(r-1)!} \cdot \frac{1}{\e^2} \int_{\lambda-\e}^\lambda  (x-t)^{r-1} (\lambda-t)(t-\lambda+\e)   dt . 
\ee
Note that $Q$ is a polynomial of degree $\le r-1$ and
\[
 Q(\lambda-\e) =    \frac{(-1)^{r-1}}{(r-1)!} \cdot \frac{1}{\e^2} \int_{\lambda-\e}^\lambda  (t-\lambda+\e)^{r} (\lambda-t)    dt = (-1)^{r-1}\frac{r}{(r+2)!} \e^r .
\]
Now, suppose that $P_n \in\Pn$ is such that $P_n\ge 0$ on $(\lambda-1/n,\lambda)$,  $P_n^{(i)} (\lambda) = f^{(i)}(\lambda)=Q^{(i)}(\lambda)$, $0\le i \le r$, and
$\norm{f-P_n}{} \le  A \norm{f^{(r)}}{} =A/4$.
Hence,
\[
\norm{P_n}{[-1,\lambda]} \le \norm{P_n-f}{[-1,\lambda]} + \norm{f}{[-1,\lambda]} \le A\norm{f^{(r)}}{}+ \norm{f}{[\lambda-\e,\lambda]} \le (A+\e^r/r!) \norm{f^{(r)}}{} \le A .
\]
Now, by Markov's inequality,
\[
\norm{P_n^{(r+1)}}{[-1, \lambda]} \le \left(\frac{2n^2}{\lambda+1}\right)^{r+1}   \norm{P_n}{[-1, \lambda]} \le (2n^2)^{r+1} A,
\]
and so, since $Q^{(r+1)}\equiv 0$ and $r$ is even,
\begin{align*}
\frac{r}{(r+2)!} \e^r  &=
-Q(\lambda-\e) \le  P_n(\lambda-\e)  -Q(\lambda-\e) \\
&=
    \frac{1}{r!} \int_\lambda^{\lambda-\e} (\lambda-\e-t)^{r} \left( P_n^{(r+1)}(t)  -Q^{(r+1)}(t) \right)   dt  \\
& \le \frac{1}{(r+1)!} \e^{r+1} \norm{P_n^{(r+1)}}{[\lambda-\e, \lambda]}
\le \frac{(2n^2)^{r+1} A}{(r+1)!} \e^{r+1} ,
\end{align*}
and we get a contradiction by letting $\e\to 0^+$.
\end{proof}

\begin{remark} \label{remapril26}
The same proof can be used to show that the estimate \ineq{march8} holds if $\lambda\in [0,1)$ and     $P_n\in\Pn$, such that $P_n\ge 0$ on $(\lambda-1/n,\lambda)$, is assumed to satisfy \eg
$P_n^{(i)}(\lambda)=f^{(i)}(\lambda)$, $0\le i \le r-1$, and $P_n (\lambda+ \e)=f (\lambda+ \e)$. In particular, this implies that constants $c(d(A_p))$ in Theorems~\ref{thcopositive} and \ref{thpositive}
(and, hence, in Theorems~\ref{th:inti} and \ref{thone})
 cannot be made independent of the minimal distance among points in $A_p$.
\end{remark}

\begin{lemma}[copositive approximation with interpolatory constraints: negative result for $f\in C^r$ with odd $r\ge 3$]\leavevmode \label{lemma50}
If $r\ge 3$ is \underline{odd}, then for
 any $n\ge r-1$ and $A>0$, there exists $f\in C^r$
 which is identically $0$ on $[-1,-1/2]$, is nonnegative on $I$ and such that, for any nonnegative on $[-1/2,1/2]$ polynomial $P_n \in \Pn$ satisfying $P_n^{(i)}(0)=f^{(i)}(0)$, $0\le i \le r-1$, we have
\[
\norm{f-P_n}{} > A \norm{f^{(r)}}{} .
\]
\end{lemma}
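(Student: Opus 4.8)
The plan is to mimic the proof of \lem{lemma30}, constructing an explicit $f=f_\e\in C^r$ depending on a small parameter $\e$ and reaching a contradiction as $\e\to 0^+$. As there, I would fix the size $\|f^{(r)}\|$ (say $\|f^{(r)}\|=\tfrac14$) independently of $\e$, prescribe $f^{(r)}$ on a short interval of length $\sim\e$ adjacent to the interpolation point $0$, let $f$ agree with a polynomial $Q\in\Poly_{r-1}$ away from that interval, and glue $f$ down to $0$ on $[-1,-1/2]$ by means of the $C^\infty$ glue function $\tG$ from the proof of \lem{lemma2003new}, so that $f\equiv 0$ on $[-1,-1/2]$ and $f\ge 0$ on $I$. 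The inequality $\|f-P_n\|>A\|f^{(r)}\|$ would be obtained by assuming the opposite, deducing $\|P_n\|\le c$ on the relevant subinterval, and then controlling a high derivative of $P_n$ by Markov's inequality.

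The decisive new ingredient, compared with the even case, is that $0$ is an \emph{interior} point and $P_n\ge 0$ on a two-sided neighbourhood. I would first record the elementary parity observation: since $r$ is odd, any $P_n\ge 0$ on $[-1/2,1/2]$ whose derivatives of order $0,\dots,r-1$ vanish at $0$ must vanish at $0$ to \emph{even} order $\ge r+1$, so that automatically $P_n^{(r)}(0)=0$. Thus the two-sided nonnegativity silently upgrades the prescribed interpolation of order $r-1$ to an effective interpolation of order $r$ at $0$ --- exactly the order that is forbidden in the even case. This is the structural reason the interior case is one order worse than the endpoint case (where nonnegativity is one-sided and no such upgrade occurs), and it is what makes $\I^{(r-2)}$ sharp at interior points.

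Having gained this extra flatness, I would run a Markov comparison in the spirit of \lem{lemma30}. Because $P_n$ vanishes to order $r+1$ at $0$ while $f$ vanishes there only to order $r$ (indeed $f^{(r)}\not\equiv 0$ just off $0$), $P_n$ is forced to be \emph{too flat} near $0$; comparing $P_n$ with the tail polynomial $Q$ extended back towards $0$, Taylor's formula with integral remainder (using $Q^{(r+1)}\equiv 0$) together with Markov's inequality bounds the relevant deviation by $c\,\e^{\,r+1}\|P_n\|=O(\e^{\,r+1})$, whereas nonnegativity of $P_n$ at an evaluation point at distance $\sim\e$, where $f\equiv 0$, forces a deviation $\ge -Q(\cdot)$ which I would arrange to be of the genuinely larger order $\sim\e^{\,r}$. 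Letting $\e\to 0^+$ then yields the contradiction $\e^{\,r}\lesssim c\,\e^{\,r+1}$, as in \rem{remapril26}.

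The hard part is arranging the last sign condition. For odd $r$ the kernel $(x-t)^{r-1}$ carries an \emph{even} exponent, so the naive single-signed bump of the even construction produces $Q$ of the \emph{wrong} (nonnegative) sign at the evaluation point, and then nonnegativity of $P_n$ is no obstruction at all. The main obstacle is therefore to design $f^{(r)}$ near $0$ as a \emph{sign-changing} profile --- so that $f$ is a small nonnegative ``hump'' rather than a monotone rise --- in such a way that $f\ge 0$ is preserved, $\|f^{(r)}\|$ stays of order $1$, yet the associated polynomial $Q$, extended to the side on which $f$ vanishes and $P_n\ge 0$ is imposed, takes a negative value of order $\e^{\,r}$. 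Choosing the weights in the integral defining $Q$ so that the negative lobe of $f^{(r)}$ dominates at that evaluation point is the only delicate step; once it is settled, the rest is the routine Markov estimate above, and the case $\lambda=0$ transfers to an arbitrary interior interpolation point by an affine change of variables.
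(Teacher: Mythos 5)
Your parity observation (``if $P_n\ge 0$ on a two\-/sided neighbourhood of $0$ and $P_n^{(i)}(0)=0$ for $0\le i\le r-1$, then $P_n^{(r)}(0)=0$'') is true as stated, but it cannot be invoked in this lemma, and since you make it the decisive ingredient, this is a genuine gap. To use it you need the interpolated data to vanish, \ie $f^{(i)}(0)=0$ for all $0\le i\le r-1$. In your construction $f^{(r)}$ is supported on an interval of length $\sim\e$ adjacent to $0$ and $f$ coincides with polynomials of degree $\le r-1$ elsewhere; hence $f$ agrees with some $Q^*\in\Poly_{r-1}$ on an interval having $0$ as an endpoint, and vanishing of the data forces $Q^*\equiv 0$. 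One then checks (by integrating $f^{(r)}$ over its short support) that $\norm{f}{}=O(\e)$ while $\norm{f^{(r)}}{}$ is of order $1$, so the polynomial $P_n\equiv 0$ is admissible --- it is nonnegative and matches all the prescribed (zero) derivatives --- and $\norm{f-P_n}{}=\norm{f}{}\le A\norm{f^{(r)}}{}$ once $\e$ is small. Thus no function of the kind your plan produces can witness the lemma: whenever the parity trick applies, the conclusion you are trying to prove is actually false for that $f$. Conversely, in any configuration where your final step could work ($f\equiv 0$ on one side of the bump, $f=Q$ on the other, with $Q$ extending to a value $\sim -\e^{r}$ at the evaluation point), one necessarily has $f(0)=Q(0)\ne 0$ (in the paper's proof of \lem{lemma50}, $f(0)=c_1\e^{r}>0$), the hypothesis of the parity claim fails, and $P_n^{(r)}(0)$ is not forced to vanish. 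The two ingredients of your plan are mutually exclusive.

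This matters quantitatively. Writing $P_n=Q+a_rx^r+a_{r+1}x^{r+1}+\dots+a_nx^n$ (which uses only the given interpolation through order $r-1$), Markov's inequality gives $|a_i|\le K(A,r,n)$ for $r\le i\le n$, and then your Taylor/Markov comparison at $-\e$ yields only $|P_n(-\e)-Q(-\e)|\le K\e^{r}+O(\e^{r+1})$, which is of the \emph{same} order as $|Q(-\e)|\sim\e^{r}$; no contradiction follows. Your claimed $O(\e^{r+1})$ bound would require $a_r=0$ exactly, which, as explained above, you cannot arrange. The missing idea --- and the actual crux of the paper's proof --- is a quantitative substitute for the parity upgrade: evaluate $P_n$ at $x_0:=-a_r/(2B)$, $B:=nK$, whose sign is unknown (this, together with the final evaluation at $-\e<0$, is where two\-/sided nonnegativity genuinely enters), and combine $0\le P_n(x_0)\le Q(x_0)-a_r^{r+1}/(2^{r+1}B^{r})$ with the smallness $\norm{Q}{}\le c_0\e^{2}$ of the paper's $Q$ and the evenness of $r+1$ to conclude $|a_r|\le D\e^{2/(r+1)}$. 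Only then does nonnegativity at $-\e$ give $0\le P_n(-\e)\le -c_1\e^{r}+D\e^{r+2/(r+1)}+B\e^{r+1}<0$ for small $\e$ (note the error is $O(\e^{r+2/(r+1)})$, not $O(\e^{r+1})$, which still suffices since it is $o(\e^{r})$). Your proposal contains no mechanism of this kind, so the argument cannot be completed along the lines you describe.
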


We note that an analog of  this lemma is not valid if we   require that $P_n$ be  nonnegative on $[-1/2,1]$ and  $P_n^{(i)}(1)=f^{(i)}(1)$, $0\le i \le r-1$ (\ie this negative result  does not hold if $A_p = \{\pm 1\}$),
see \eg
\thm{thcopositive} with  $(r,k,m_1,m_2, m_3) =(2j+1,k,2j-1,2j,2j)$, $j\in\N$.

\begin{proof}
Let $n\ge r-1$ and   $A>0$ be fixed, and  define $f\in C^r$ as
\[
f(x) =     \frac{1}{(r-2)!} \int_{-\e}^x (x-t)^{r-2} f^{(r-1)}(t) dt ,
\]
where
\[
f^{(r-1)}(x) :=
\begin{cases}
0, & \text{if $x<-\e$ or $x>0$,} \\
 x^2 (x+\e)^2/\e^3  , & \text{if $-\e \le x \le 0 $,}
\end{cases}
\]
and $\e\in (0,1/2)$.
Then,  $\norm{f^{(r)}}{} \le 1$, $f\equiv 0$ on $[-1, - \e]$,    and   $f$ is clearly nonnegative on $I$.
Now, for $x\ge 0$, we have
\be \label{apr29.1}
f(x) = Q(x)   :=   \frac{1}{(r-2)!  } \cdot \frac{1}{\e^3}  \int_{- \e}^0 (x-t)^{r-2} t^2(t+\e)^2   dt  .
\ee
Note that $Q\in\Poly_{r-2}$,
\be \label{march7}
\norm{Q}{} \le \frac{2^{r-2}}{(r-2)!  } \cdot \frac{1}{\e^3}  \int_{- \e}^0   t^2(t+\e)^2   dt \le \frac{2^{r-2}}{(r-2)!  } \e^2 =:
 c_0 \e^2 ,
\ee
and
\[
Q(-\e) = (-1)^r  {2(r^2-r) \over (r+3)!} \e^r =: -c_1 \e^r , \quad c_1 = c_1(r) >0,
\]
since $r$ is odd.

Suppose that there exists a polynomial $P_n \in \Pn$ which is nonnegative on $[-1/2,1/2]$, satisfies  $P_n^{(i)}(0)=f^{(i)}(0)=Q^{(i)}(0)$, $0\le i \le r-1$,   and such that
\be \label{tmptr22}
\norm{f-P_n}{} \le  A \norm{f^{(r)}}{} \le  A .
\ee
Then, $P_n$ can be written as
\[
P_n (x) = Q (x)
  + a_{r} x^{r}  + a_{r+1} x^{r+1} +   \dots + a_n x^n ,
\]
and taking into account that
\[
\norm{P_n-Q}{[0,1]} = \norm{P_n-f}{[0,1]} \le   A   ,
\]
by Markov's inequality, we conclude that
\[
|a_i| =  |P_n^{(i)}(0)|/i!  =   |P_n^{(i)}(0)-Q^{(i)}(0)|/i! \le K  , \quad r\le i \le n,
\]
where the constant  $K$ depends only on $A$, $r$  and $n$.

Since $r+1$ is even, for any $x\in I$, we have the following estimate
\begin{align*}
0\le P_n(x) & \le Q(x) + a_r x^r + |x|^{r+1} \sum_{i=r+1}^n |a_i| |x|^{i-r-1} \\
& \le   Q(x) + a_r x^r + B x^{r+1} ,
\end{align*}
where  $B:=nK$. 
We also note that $a_r$ satisfies $|a_r| \le K < B$.

Now, let $x_0= -a_r/(2B)$. Then, $x_0\in (-1/2,1/2)$ and
\[
0\le  P_n(x_0) \le  Q(x_0) +(-1)^r  \frac{a_r^{r+1}}{2^{r+1} B^r}     =            Q(x_0) - \frac{a_r^{r+1}}{2^{r+1} B^r} ,
\]
and so by \ineq{march7}  (recall that $r+1$ is even)
\[
a_r^{r+1} \le 2^{r+1} B^r Q(x_0) \le c_0 2^{r+1} B^r \e^2 \quad \Rightarrow \quad |a_r| \le D \e^{2/(r+1)} ,
\]
where $D$ is a positive constant that depends only on $A$, $r$  and $n$, and is independent of $\e$.

Now, we have
\begin{align*}
0\le P_n(-\e) & \le Q(-\e) +  D \e^{2/(r+1)} \e^r  + B \e^{r+1}  \\
& = \e^r \left( - c_1 + O(\e^{2/(r+1)}) \right) < 0 \quad \text{as $\e\to 0^+$} ,
\end{align*}
which is a contradiction.
\end{proof}

The following lemma closes a few gaps left in \lem{lemma50}. It is applicable in the case $r=1$   as well as for establishing negative results for interpolation at the endpoints of $I$ (the case for $\lambda=1$).
However, if $r\ge 3$ and $\lambda \ne 1$, then this lemma is   weaker than \lem{lemma50}.

\begin{lemma}[copositive approximation with interpolatory constraints: negative result for $f\in C^r$ with odd $r\ge 1$]\leavevmode \label{lemma28}
If $r\in\N$ is \underline{odd}, then for any $n\ge r$,  $A>0$  and $\lambda\in [0,1]$, there exists $f\in C^r$ which is
identically $0$ on $[-1,-1/2]$, is nonnegative on $I$ and such that, for any nonnegative on $(\lambda-1/n,\lambda)$  polynomial $P_n \in \Pn$ satisfying
$P_n^{(i)}(\lambda)=f^{(i)}(\lambda)$, $0\le i \le r$,   we have
\[
\norm{f-P_n}{} > A \norm{f^{(r)}}{} .
\]
\end{lemma}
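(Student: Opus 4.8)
The plan is to run the argument of \lem{lemma30} (the even-$r$ case) essentially verbatim, the only genuine issue being that for odd $r$ the polynomial piece of $f$ must be arranged to have the \emph{opposite} sign at $\lambda-\e$, which forces a different choice of bump (and a separate construction when $r=1$). Throughout, $n\ge r$, $A$, and $\lambda$ are fixed, and the witness $f=f_\e$ will depend on a small parameter $\e\in(0,1/n)$ that is chosen at the very end.

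I would construct $f$ so that on a right half-neighborhood of $\lambda$ it coincides with a fixed polynomial $Q$ whose left extension satisfies $Q(\lambda-\e)=-c_1\e^r<0$ for some $c_1=c_1(r)>0$, while $f$ itself stays nonnegative, vanishes on $[-1,-1/2]$, and has $\norm{f^{(r)}}{}$ bounded independently of $\e$. For odd $r\ge3$ the correct choice is the construction of \lem{lemma50} translated to $\lambda$: let $f^{(r-1)}$ be the nonnegative bump $(\cdot-\lambda)^2(\cdot-\lambda+\e)^2/\e^3$ supported on $[\lambda-\e,\lambda]$, integrate it $r-1$ times starting at $\lambda-\e$ (so $f\equiv0$ on $[-1,\lambda-\e]$, $f$ is nondecreasing and hence $\ge0$, and $f$ equals a polynomial $Q$ of degree $r-2$ on $[\lambda,1]$), and glue $f$ down to $0$ on $[-1,-1/2]$ using $\tG(\cdot\,;[-1/2,-1/4])$. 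The key computation, already carried out in \lem{lemma50}, is $Q(\lambda-\e)=(-1)^r\frac{2(r^2-r)}{(r+3)!}\e^r=-c_1\e^r$, negative precisely because $r$ is odd. Since the degree $r-2$ becomes $-1$ when $r=1$, this degenerates, and for $r=1$ I would instead take $Q(x):=(x-\lambda)^2+b(x-\lambda)+b\e/2$ with a fixed $b>0$; then $Q>0$ on $[\lambda,1]$, $Q(\lambda-\e)=\e^2-b\e/2<0$ is of size $\sim\e=\e^r$, and $f$ equals $Q$ on the right of $\lambda$ and is bent smoothly down (staying nonnegative and $C^1$, with $\norm{f'}{}$ bounded) to $0$ on $[-1,-1/2]$.

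For the contradiction, suppose a competing $P_n\in\Pn$ exists, i.e.\ $P_n\ge0$ on $(\lambda-1/n,\lambda)$, $P_n^{(i)}(\lambda)=f^{(i)}(\lambda)$ for $0\le i\le r$, and $\norm{f-P_n}{}\le A\norm{f^{(r)}}{}$. Since $f\equiv Q$ just to the right of $\lambda$, the interpolation conditions read $P_n^{(i)}(\lambda)=Q^{(i)}(\lambda)$ for $0\le i\le r$, so $P_n-Q$ vanishes to order $r+1$ at $\lambda$. Because $\norm{f}{[-1,\lambda]}$ is $O(\e^r)$ (respectively $O(\e)$ for $r=1$), the approximation bound gives $\norm{P_n}{[-1,\lambda]}\le c$, and Markov's inequality on $[-1,\lambda]$ yields $\norm{P_n^{(r+1)}}{[-1,\lambda]}\le c(n,r,\lambda)$. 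Writing the Taylor remainder
\[
P_n(\lambda-\e)-Q(\lambda-\e)=\frac{1}{r!}\int_\lambda^{\lambda-\e}(\lambda-\e-t)^r\bigl(P_n^{(r+1)}(t)-Q^{(r+1)}(t)\bigr)\,dt
\]
and using that $Q^{(r+1)}$ is bounded (it is $0$ for $r\ge3$ and constant for $r=1$), I get $|P_n(\lambda-\e)-Q(\lambda-\e)|\le c(n,A,r,\lambda)\,\e^{r+1}$. Hence $P_n(\lambda-\e)\le -c_1\e^r+c\,\e^{r+1}<0$ once $\e$ is chosen small enough (depending on $n,A,r,\lambda$), whereas $\lambda-\e\in(\lambda-1/n,\lambda)$ forces $P_n(\lambda-\e)\ge0$; this contradiction, and fixing the corresponding $\e$, produces the required $f$.

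The main obstacle, and the reason this is a separate lemma, is securing the sign of $Q(\lambda-\e)$. The \lem{lemma30} construction (bump in $f^{(r)}$, so $\deg Q=r-1$) produces $Q(\lambda-\e)$ of sign $(-1)^{r-1}$, which is positive — hence useless — for odd $r$; lowering the bump to $f^{(r-1)}$ (so $\deg Q=r-2$) flips the sign to $(-1)^r<0$. Matching all $r$ derivatives, one more than in \lem{lemma50}, is exactly what pins the coefficient of $(x-\lambda)^r$ in $P_n-Q$ to $0$, so the delicate free-coefficient (``$a_r$'') analysis of \lem{lemma50}, together with its localization near an interior point — which is what fails at $\lambda=1$ — is entirely avoided; this is precisely why the present lemma also covers the endpoint $\lambda=1$ and the case $r=1$ that \lem{lemma50} cannot reach. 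The only fiddly point is checking that the $r=1$ polynomial $Q$ admits a nonnegative $C^1$ extension to the left with $\norm{f'}{}$ bounded, which is routine.
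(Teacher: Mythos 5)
Your proposal is correct, and the contradiction mechanism is exactly the paper's: make $f$ vanish on $[-1,\lambda-\e]$ and coincide on $[\lambda,1]$ with a polynomial $Q$ whose value at $\lambda-\e$ is negative of order $\e^r$ (the sign coming precisely from the oddness of $r$), then bound $\norm{P_n}{[-1,\lambda]}$, apply Markov's inequality, and use the order-$(r+1)$ Taylor remainder at $\lambda$ (available because all $r$ derivatives are matched) to force $P_n(\lambda-\e)<0$, contradicting nonnegativity on $(\lambda-1/n,\lambda)$. The only genuine difference is the choice of witness. The paper puts a linear ramp in $f^{(r)}$ — namely $f^{(r)}=0$ left of $\lambda-\e$, $f^{(r)}(x)=(x-\lambda+\e)/\e$ on $[\lambda-\e,\lambda]$, and $f^{(r)}=1$ right of $\lambda$ — so that
\[
Q(x)=\frac{1}{(r+1)!}\cdot\frac1\e\left[(x-\lambda+\e)^{r+1}-(x-\lambda)^{r+1}\right],
\qquad Q(\lambda-\e)=-\frac{\e^r}{(r+1)!},
\]
the sign following from $r+1$ being even; this single formula covers every odd $r\ge1$ and every $\lambda\in[0,1]$ with no case distinction. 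Your reuse of the \lem{lemma50} bump in $f^{(r-1)}$ (so $\deg Q=r-2$) works equally well for odd $r\ge3$, but it degenerates at $r=1$ and costs you the separate ad hoc construction there — which is indeed routine, e.g.\ $f(x)=\tfrac{b}{2\e}(x-\lambda+\e)^2$ on $[\lambda-\e,\lambda]$, $f\equiv0$ to the left, matches your $Q$ in $C^1$ at $\lambda$ with bounded derivative — so the paper's ramp is simply more economical. Two minor remarks: your gluing with $\tG$ is redundant, since your $f$ already vanishes on $[-1,\lambda-\e]\supset[-1,-1/2]$ once $\e\le1/2$; and your closing observation — that matching all $r$ derivatives pins the coefficient of $(x-\lambda)^r$ in $P_n-Q$ to zero and thereby avoids the free-coefficient analysis of \lem{lemma50}, which is exactly what lets the statement reach $\lambda=1$ and $r=1$ — is correct and consistent with the paper's remark following that lemma.
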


\begin{proof} Let $n\ge r$ and $A \ge 1$ be fixed, and define $f\in C^r$ as
\[
f(x) =     \frac{1}{(r-1)!} \int_{\lambda-\e}^x (x-t)^{r-1} f^{(r)}(t) dt ,
 \]
 where
 \[
f^{(r)}(x) :=
\begin{cases}
0 , & \text{if $x<\lambda-\e$,} \\
  (x-\lambda+\e)/\e   , & \text{if $\lambda-\e \le x \le \lambda $,} \\
 1 , & \text{if $x> \lambda$,}
\end{cases}
\]
and $\e\in(0,1/(2n))$. Note that $f$ is nonnegative on $I$,  $f\equiv 0$ on $[-1,\lambda-\e]$,  $\norm{f^{(r)}}{} = 1$ and, for $x\ge \lambda$,
\begin{align*}
f(x) =Q (x) & :=
\frac{1}{(r+1)!} \cdot \frac 1\e \left[ (x-\lambda+\e)^{r+1}-(x-\lambda)^{r+1} \right] .
\end{align*}

Now, suppose that $P_n \in\Pn$ is such that $P_n\ge 0$ on $(\lambda-1/n,\lambda)$,  $P_n^{(i)} (\lambda) = f^{(i)}(\lambda)=Q^{(i)}(\lambda)$, $0\le i \le r$, and
$\norm{f-P_n}{} \le  A \norm{f^{(r)}}{} =A$.
Then,
\[
\norm{P_n}{[-1,\lambda]} \le \norm{P_n-f}{[-1,\lambda]} + \norm{f}{[-1,\lambda]} \le A\norm{f^{(r)}}{}+ \norm{f}{[\lambda-\e,\lambda]} \le (A+\e^r/r!) \norm{f^{(r)}}{} \le 2A .
\]
Now, by Markov's inequality,
\[
\norm{P_n^{(r+1)}}{[-1, \lambda]} \le \left(\frac{2n^2}{\lambda+1}\right)^{r+1}   \norm{P_n}{[-1, \lambda]} \le 2 (2n^2)^{r+1} A,
\]
and so, since $Q^{(r+1)}\equiv 0$, 
\begin{align*}
\frac{1}{(r+1)!} \e^r    &=
-Q(\lambda-\e) \le  P_n(\lambda-\e)  -Q(\lambda-\e) \\
&=
    \frac{1}{r!} \int_\lambda^{\lambda-\e} (\lambda-\e-t)^{r} \left( P_n^{(r+1)}(t)  -Q^{(r+1)}(t) \right)   dt  \\
& \le \frac{1}{(r+1)!} \e^{r+1} \norm{P_n^{(r+1)}}{[\lambda-\e, \lambda]}
\le \frac{2(2n^2)^{r+1} A}{(r+1)!} \e^{r+1} ,
\end{align*}
and we get a contradiction by letting $\e\to 0^+$.
\end{proof}

\subsection{Negative results for copositive approximation with interpolation at the points in $Y_s$} \label{section3.3}

\begin{lemma}[copositive approximation with interpolation at  $Y_s$: negative result for $f\in C^1$ and $Y_s = \{0\}$]\leavevmode  \label{lemma40}
For
 any $n\in\N$ and $A>0$, there exists $f\in C^1 \cap  \Delta^{(0)}(\{0\})$ such that, for any polynomial $P_n \in \Pn$ such that $P_n \ge 0$
  on $(0,1/n)$ and satisfies $P_n^{(i)}(0)=f^{(i)}(0)$, $i=0,1$, we have
\[
\norm{f-P_n}{} > A \w_3(f', 1) .
\]
\end{lemma}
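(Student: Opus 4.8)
The plan is to proceed in the spirit of Lemmas~\ref{lemma205} and \ref{lemma30}: build a function with a short flat spot immediately to the right of the interpolation point $0$, but now based on a \emph{cubic} $Q$ whose second derivative at $0$ is negative of size comparable to $\e$. The order $\w_3$ is matched to the fact that the obstruction will sit in the second derivative of $P_n$ at $0$. Fix $n$ and $A$, let $\e\in(0,1)$ be a small parameter (to be sent to $0^+$), put $Q(x):=2x^3-3\e x^2$, so that $Q'(x)=6x(x-\e)$, $Q(\e)=-\e^3$, $Q'(\e)=0$ and $Q''(0)=-6\e$, and define
\[
f(x):=\begin{cases} Q(x), & x\in[-1,0],\\ 0, & x\in[0,\e],\\ Q(x)+\e^3, & x\in[\e,1].\end{cases}
\]
First I would check that $f\in C^1\cap\Delta^{(0)}(\{0\})$: the pieces match to first order at $0$ (since $Q(0)=Q'(0)=0$) and at $\e$ (since $Q(\e)=-\e^3$, $Q'(\e)=0$), and writing $Q(x)=x^2(2x-3\e)$ shows $f\le 0$ on $[-1,0]$ and $f\ge 0$ on $[0,1]$. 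Note $f(0)=f'(0)=0$, so the interpolation constraints read $P_n(0)=P_n'(0)=0$.

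Next I would estimate the modulus. The difference $f'-Q'$ vanishes off $[0,\e]$, where it equals $-Q'(x)=6x(\e-x)$, so $\norm{f'-Q'}{}=3\e^2/2$; since $Q'$ is a polynomial of degree $2$ (hence annihilated by third differences),
\[
\w_3(f',1)=\w_3(f'-Q',1)\le 8\norm{f'-Q'}{}=12\e^2 .
\]
Arguing by contradiction, I would assume a valid $P_n$ satisfies $\norm{f-P_n}{}\le A\,\w_3(f',1)\le 12A\e^2$. Using $f=Q$ on $[-1,0]$, $|f-Q|=|Q|\le 3\e^3$ on $[0,\e]$, and $|f-Q|=\e^3$ on $[\e,1]$, this gives $\norm{P_n-Q}{}\le 12A\e^2+3\e^3\le(12A+3)\e^2$ for $\e\le 1$.

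The core of the argument is a local second-order obstruction at $0$. Since $P_n\ge 0$ on $(0,1/n)$ while $P_n(0)=P_n'(0)=0$, the Taylor expansion $P_n(x)=\tfrac12 P_n''(0)x^2+O(x^3)$ as $x\to 0^+$ forces $P_n''(0)\ge 0$; as $Q''(0)=-6\e$, we obtain
\[
(P_n-Q)''(0)=P_n''(0)-Q''(0)\ge 6\e .
\]
On the other hand, applying Markov's inequality twice to $P_n-Q$ gives $|(P_n-Q)''(0)|\le c(n)\norm{P_n-Q}{}\le c(n)(12A+3)\e^2$. The two displays yield $6\e\le c(n)(12A+3)\e^2$, i.e. $6\le c(n)(12A+3)\e$, which is false once $\e$ is chosen small enough (depending only on $n$ and $A$); this contradiction proves the claim.

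The one point requiring care is the matching of exponents in the construction: $Q$ must be cubic with $Q''(0)$ of exact order $\e$ (so that positivity of $P_n''(0)$ produces a second-derivative gap $\gtrsim\e$), while the deviation $f'-Q'$, supported on the flat spot $[0,\e]$, must be of the strictly smaller order $\e^2$, so that $\w_3(f',1)=O(\e^2)$. Crucially, $f$ coincides with the cubic $Q$ on all of $[-1,0]$, so that $f'=Q'$ is exactly quadratic there and contributes nothing to $\w_3(f',1)$; this is what keeps the modulus at order $\e^2$ (inserting a glue factor on $[-1,0]$, as in the neighbouring lemmas, would spoil this and is not needed here, since $f\le 0$ on $[-1,0]$ already gives $f\in\Delta^{(0)}(\{0\})$). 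The Markov step then amplifies the $O(\e^2)$ uniform bound only to $c(n)\,O(\e^2)$, which cannot absorb the $\Omega(\e)$ gap as $\e\to 0^+$. I expect getting these orders consistent to be the only genuine subtlety; the remaining verifications ($C^1$-smoothness, the sign conditions, and the elementary modulus bound) are routine.
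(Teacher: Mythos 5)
Your proposal is correct and is essentially the paper's own proof: your test function is exactly $6\e^2$ times the paper's (the paper takes $Q(x)=(2x^3-3\e x^2)/(6\e^2)$, i.e., normalizes so that $\w_3(f',1)\le 2$ rather than $O(\e^2)$, which changes nothing since both sides of the claimed inequality scale linearly), and both arguments combine the interpolation conditions at $0$ with Markov's inequality applied to $P_n-Q$ and then let $\e\to 0^+$. The only difference is the endgame: the paper exploits nonnegativity of $P_n$ at the single point $\e\in(0,1/n)$, writing $P_n(\e)-Q(\e)=\int_0^\e(\e-t)\left(P_n''(t)-Q''(t)\right)dt$ and contradicting $Q(\e)=-\e/6$, whereas you exploit nonnegativity on a right neighbourhood of $0$ to force $P_n''(0)\ge 0$ and contradict $Q''(0)=-6\e$ directly. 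Both endgames are equally valid and of the same depth; yours avoids the integral form of the Taylor remainder, while the paper's avoids the (equally routine) expansion argument needed to justify $P_n''(0)\ge 0$.
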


\begin{proof}
Let $n\in \N$ and $A>0$ be fixed (without loss of generality, we can assume that $n\ge 3$), and  define $f\in C^1$ as
\[
f(x) =       \int_{0}^x   f'(t) dt ,
\quad
\text{ where }\;
f'(x) :=
\begin{cases}
x  (x- \e)/\e^2 , & \text{if $x<0$ or $x>\e$,} \\
0, & \text{if $0\le x \le \e $,}
\end{cases}
\]
and $\e\in (0,1/(2n))$. Note that $f\in \Delta^{(0)}(\{0\})$, $f^{(i)}(0)=0$, $i=0,1$,
and
denoting $q(x):=  x  (x-\e)/\e^2$, we have
\[
\w_3(f', 1) = \w_3(f'-q,1) \le 8 \norm{f'-q}{} = 8 \norm{q}{[0,\e]} = 2 .
\]
Then, for $x\le 0$, we have
\[
f(x) = Q(x)   :=   (2x^3-3\e x^2)/(6\e^2) .
\]

Suppose that $P_n \in \Pn$ is nonnegative on $(0,1/n)$, satisfies  $P_n^{(i)}(0)=f^{(i)}(0)=Q^{(i)}(0)=0$, $i=0,1$,   and such that
\be \label{tmptr2}
\norm{f-P_n}{} \le  A \w_3(f', 1) \le 2A .
\ee
 Taking into account that
\[
\norm{P_n-Q}{[-1,\e]} \le  \norm{P_n-f}{[-1,\e]} + \norm{f-Q}{[-1,\e]}  \le    \norm{P_n-f}{} + \norm{f-Q}{[0,\e]} \le 2 A +1   ,
\]
by Markov's inequality, we conclude that
\[
\norm{P_n'' -Q'' }{[-1,\e]} \le \left(\frac{2n^2}{\e+1}\right)^{2} \norm{P_n-Q}{[-1,\e]}  \le 4n^4(2A+1) .
\]

%
%

Now,
\begin{align*}
\e/6   &= -Q(\e) \le  P_n(\e)  -Q(\e)  = \int_0^\e (\e-t) \left(P_n''(t)-Q''(t)\right) dt
\le 2n^4(2A+1)  \e^{2} ,
\end{align*}
and we get a contradiction by letting $\e \to 0^+$.
\end{proof}

\begin{lemma}[copositive approximation with interpolation at  $Y_s$: negative result for $f\in C^2$ and $Y_s = \{0\}$]\leavevmode  \label{lemmam22}
For
 any $n\in\N$ and $A>0$, there exists $f\in C^2 \cap  \Delta^{(0)}(\{0\})$ such that, for any polynomial $P_n \in \Pn\cap  \Delta^{(0)}(\{0\})$ such that   $P_n^{(i)}(0)=f^{(i)}(0)$, $i=0,1$, we have
\[
\norm{f-P_n}{} > A \w_4(f'', 1) .
\]
\end{lemma}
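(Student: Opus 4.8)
The plan is to reproduce the construction in the proof of \lem{lemma40}, shifted up by one order of differentiation, and then to exploit the \emph{full} copositivity of $P_n$ assumed here (in \lem{lemma40} only one-sided nonnegativity near $0$ was assumed) to gain one extra order of vanishing at the interpolation point. Fixing $n$ and $A$ and choosing $\e\in(0,1/(2n))$, I would define $f\in C^2$ by prescribing a narrow ``bump'' for its second derivative,
\[
f''(x):=\begin{cases} 0, & 0\le x\le \e,\\ q(x), & \text{otherwise,}\end{cases}\qquad q(x):=x^2(x-\e)/\e^3,
\]
and set $f(x):=\int_0^x (x-t)f''(t)\,dt$, so that $f(0)=f'(0)=0$. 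Since $q(0)=q(\e)=0$, the function $f''$ is continuous and $f\in C^2$; a direct sign check ($q<0$ for $x<0$ and $q>0$ for $x>\e$) shows $f\le 0$ on $[-1,0]$ and $f\ge 0$ on $[0,1]$, i.e. $f\in\Delta^{(0)}(\{0\})$. Because $q$ is a cubic, $\w_4(f'',1)=\w_4(f''-q,1)\le 2^4\norm{f''-q}{}=16\norm{q}{[0,\e]}$, which is an absolute constant (one computes $\norm{q}{[0,\e]}=4/27$); this upper bound is all that is needed on the right-hand side.

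For $x\le 0$ one has $f(x)=Q(x):=\int_0^x (x-t)q(t)\,dt$, a polynomial of degree $\le 5$ with $Q(0)=Q'(0)=Q''(0)=0$ and $Q(\e)=-\e^2/30$. I would then suppose, towards a contradiction, that some $P_n\in\Pn\cap\Delta^{(0)}(\{0\})$ with $P_n^{(i)}(0)=f^{(i)}(0)=0$ ($i=0,1$) satisfies $\norm{f-P_n}{}\le A\,\w_4(f'',1)$. The crucial point is that copositivity forces a third interpolation condition for free: since $P_n(0)=P_n'(0)=0$ while $P_n\ge 0$ on $[0,1]$ and $P_n\le 0$ on $[-1,0]$, the expansion $P_n(x)=\tfrac12 P_n''(0)x^2+O(x^3)$ must be $\ge 0$ just to the right of $0$ and $\le 0$ just to the left, which forces $P_n''(0)=0$. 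Hence $P_n-Q$ vanishes to order three at $0$.

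With this, the contradiction becomes quantitative. On $[-1,\e]$ (an interval of length $\approx 1$) the bound $\norm{f-P_n}{}\le A\w_4(f'',1)$ together with $\norm{f-Q}{[0,\e]}=\norm{Q}{[0,\e]}=O(\e^2)$ gives $\norm{P_n-Q}{[-1,\e]}\le c(A)$, and three applications of Markov's inequality yield $\norm{P_n'''-Q'''}{[-1,\e]}\le c(n,A)$. Using the triple zero at $0$ and evaluating at $\e$, where $P_n(\e)\ge 0$,
\[
\frac{\e^2}{30}=-Q(\e)\le P_n(\e)-Q(\e)=\frac12\int_0^\e (\e-t)^2\left(P_n'''(t)-Q'''(t)\right)dt\le c(n,A)\,\e^3,
\]
so $1/30\le c(n,A)\,\e$, which is false once $\e$ is small enough. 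Choosing such an $\e$ exhibits the required $f$ and finishes the argument.

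The main obstacle, and the only essential new idea beyond \lem{lemma40}, is the deduction $P_n''(0)=0$ from copositivity. Without this extra order of vanishing, both sides of the final inequality would scale like $\e^2$ (the lower bound $|Q(\e)|\sim\e^2$ against an upper bound $\sim\e^2$ coming from a mere double zero at $0$), and the argument would fail to close; it is precisely the stronger hypothesis $P_n\in\Delta^{(0)}(\{0\})$, rather than one-sided nonnegativity near $0$, that supplies the missing order and makes the $\e^2$-versus-$\e^3$ gap available.
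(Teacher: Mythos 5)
Your proof is correct, and it rests on exactly the same key observation as the paper's: since $P_n\in\Delta^{(0)}(\{0\})$ and $P_n(0)=P_n'(0)=0$, copositivity forces the extra vanishing $P_n''(0)=0$, so $P_n-Q$ has a triple zero at the origin, and the contradiction is then closed with Markov's inequality. The execution differs, though. The paper takes the symmetric odd bump $f''(x)=x(x^2-\e^2)/\e^3$ for $|x|>\e$ (zero on $[-\e,\e]$), lets $Q$ agree with $f$ on $[\e,1]$, and derives the contradiction at the level of third derivatives at $0$: it uses a \emph{second} consequence of copositivity, namely $P_n'''(0)\ge 0$, together with $Q'''(0)=q'(0)=-1/\e$, to get $1/\e\le \norm{P_n'''-Q'''}{[0,1]}\le c(n,A)$. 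You instead transplant the one-sided construction of \lem{lemma40} up one order of smoothness, let $Q$ agree with $f$ on $[-1,0]$, and close the argument by evaluating at $x=\e$: the triple zero and the integral form of Taylor's remainder give $|P_n(\e)-Q(\e)|\le c(n,A)\,\e^3$, while $-Q(\e)=\e^2/30$ and $P_n(\e)\ge 0$, so $\e^2/30\le c(n,A)\,\e^3$, which fails for small $\e$. What each buys: your route needs only $P_n''(0)=0$ plus nonnegativity at the single point $\e$ (not the sign of $P_n'''(0)$), and it keeps this lemma structurally parallel to the paper's other negative results (Lemmas~\ref{lemma40}, \ref{lemma30}, \ref{lemma28}); the paper's route is marginally shorter, since comparing $Q'''(0)$ with $P_n'''(0)$ avoids the Taylor-remainder computation altogether. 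One small housekeeping point: to apply Markov's inequality to $P_n-Q$ you should note, as the paper does, that one may assume $n\ge 5$ without loss of generality, so that $\deg(P_n-Q)\le n$.
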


\begin{proof}
Let $n\in \N$ and $A>0$ be fixed (without loss of generality, we can assume that $n\ge 5$), and  define $f\in C^2$ as
\[
f(x) =       \int_{0}^x (x-t)   f''(t) dt ,
\quad
\text{ where }\;
f''(x) :=
\begin{cases}
x  (x^2- \e^2)/\e^3 , & \text{if  $|x|>\e$,} \\
0, & \text{if $|x| \le \e $,}
\end{cases}
\]
and $\e\in (0,1/2)$. Note that $f\in \Delta^{(0)}(\{0\})$, $f^{(i)}(0)=0$, $i=0,1$,
and
denoting $q(x):=  x  (x^2-\e^2)/\e^3$, we have
\[
\w_4(f'', 1) = \w_4(f''-q,1) \le 16 \norm{f''-q}{} = 16 \norm{q}{[-\e,\e]} \le 7.
\]
Then, for $x\ge \e$, we have
\[
f(x) = Q(x)   :=  \int_{\e}^x (x-t) q(t)   dt .
\]

Suppose that $P_n \in \Pn\cap  \Delta^{(0)}(\{0\})$  satisfies  $P_n^{(i)}(0)=f^{(i)}(0)=0$, $i=0,1$,   and such that
\[
\norm{f-P_n}{} \le  A \w_4(f'', 1) \le 7A .
\]
 Taking into account that
\[
\norm{P_n-Q}{[0,1]} \le  \norm{P_n-f}{[0,1]} + \norm{f-Q}{[0,1]}  \le    \norm{P_n-f}{} + \norm{Q}{[0,\e]} \le 7 A +1   ,
\]
by Markov's inequality, we conclude that
\[
\norm{P_n''' -Q''' }{[0,1]} \le \left( 2n^2 \right)^{3} \norm{P_n-Q}{[0,1]}  \le 8n^6(7A+1) .
\]
Since $P_n \in  \Delta^{(0)}(\{0\})$  and  $P_n (0)=P_n'(0)=0$,  we have $P_n''(0) = 0$ and $P_n'''(0) \ge 0$. Hence,
\begin{align*}
1/\e     &= -q'(0) = -Q'''(0) \le  P_n'''(0)  -Q'''(0)  \le 8n^6(7A+1) ,
\end{align*}
and we get a contradiction by letting $\e \to 0^+$.
\end{proof}

\begin{lemma}[copositive approximation with interpolation at  $Y_s$: negative result for $f\in C^2$ and $Y_s = \{0\}$]\leavevmode  \label{march21}
For
 any $n\ge 2$ and $A>0$, there exists $f\in C^2 \cap  \Delta^{(0)}(\{0\})$ such that, for any polynomial $P_n \in \Pn\cap \Delta^{(0)}(\{0\})$ such that
 $P_n^{(i)}(0)=f^{(i)}(0)$, $0\le i \le 2 $, we have
\[
\norm{f-P_n}{} > A \norm{f''}{} .
\]
\end{lemma}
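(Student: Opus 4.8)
The plan is to exploit the following mechanism: at the sign–change point $0$ the extra constraint $P_n''(0)=f''(0)$ (the feature distinguishing this lemma from \lem{lemmam22}, where $f''(0)=0$ and copositivity alone pins $P_n''(0)=0$) can be made incompatible with $P_n\ge 0$ on $[0,1]$. First I would fix a large parameter $\beta=\beta(n,A)>4$, to be chosen at the very end, and set $Q(x):=x-\tfrac{\beta}{2}x^2=x\bigl(1-\tfrac{\beta}{2}x\bigr)$. Then $Q\le 0$ on $[-1,0]$, while $Q\ge 0$ on $[0,2/\beta]$ with vertex value $Q(1/\beta)=1/(2\beta)$. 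I would define $f$ to coincide with $Q$ on $[-1,1/\beta]$ and, using the glue function $\tG$ from the proof of \lem{lemma2003new}, to be continued smoothly on $[1/\beta,1]$ so that $f\ge 0$ there, $f\in C^2\cap\Delta^{(0)}(\{0\})$, $\norm{f''}{}=\beta$, and $\norm{f}{}\le c_0\beta$ for an absolute constant $c_0$. In particular $f(0)=0$, $f'(0)=1$, $f''(0)=-\beta$, and $f\equiv Q$ in a neighborhood of $0$, so that any admissible $P_n$ satisfies $P_n^{(i)}(0)=Q^{(i)}(0)$ for $0\le i\le 2$.

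Next I would run a test–point argument at $x_1:=4/\beta\in(0,1]$. Since $Q$ is quadratic, $Q'''\equiv 0$, and since $P_n-Q$ vanishes to order $2$ at $0$, Taylor's formula gives
\be
P_n(x_1)-Q(x_1)=\frac12\int_0^{x_1}(x_1-t)^2 P_n'''(t)\,dt ,
\ee
whence $|P_n(x_1)-Q(x_1)|\le \tfrac{x_1^3}{6}\,\norm{P_n'''}{[0,x_1]}$. On the other hand $Q(x_1)=\tfrac{4}{\beta}-\tfrac{\beta}{2}\cdot\tfrac{16}{\beta^2}=-\tfrac{4}{\beta}<0$, while copositivity forces $P_n(x_1)\ge 0$ because $x_1\in(0,1]$; hence $P_n(x_1)-Q(x_1)\ge -Q(x_1)=4/\beta$. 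Inserting $x_1=4/\beta$ and combining the two estimates yields
\be
\frac{4}{\beta}\le\frac16\Bigl(\frac{4}{\beta}\Bigr)^{3}\norm{P_n'''}{[0,x_1]},\qquad\text{i.e.}\qquad \norm{P_n'''}{[0,x_1]}\ge\frac{3\beta^2}{8}.
\ee

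Finally I would invoke Markov's inequality three times, $\norm{P_n'''}{}\le n^6\norm{P_n}{}$, so that $\tfrac{3\beta^2}{8}\le\norm{P_n'''}{[0,x_1]}\le\norm{P_n'''}{}\le n^6\norm{P_n}{}$, giving $\norm{P_n}{}\ge 3\beta^2/(8n^6)$. If we had $\norm{f-P_n}{}\le A\norm{f''}{}=A\beta$, then $\norm{P_n}{}\le\norm{f}{}+A\beta\le(c_0+A)\beta$, so $3\beta^2/(8n^6)\le(c_0+A)\beta$, i.e.\ $\beta\le 8n^6(c_0+A)/3$. Choosing $\beta>8n^6(c_0+A)/3$ at the outset makes this impossible, and therefore every admissible $P_n$ must satisfy $\norm{f-P_n}{}>A\norm{f''}{}$. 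I expect the only delicate point to be the construction of $f$ in the first paragraph—arranging the $C^2$ continuation on $[1/\beta,1]$ so that simultaneously $f\ge 0$, $\norm{f''}{}$ stays equal to $\beta$, and $\norm{f}{}=O(\beta)$, all without disturbing the identity $f\equiv Q$ near $0$; this is routine with the glue function but must be checked. The conceptual heart is the choice $f''(0)=-\beta$ with $\beta$ large: it makes the interpolated quadratic $Q$ dip below zero already at the nearby point $x_1=4/\beta$, forcing a copositive $P_n$ that matches $Q$ to second order at $0$ to have third derivative of size $\beta^2$, and hence sup–norm of size $\beta^2/n^6\gg\beta=\norm{f''}{}$.
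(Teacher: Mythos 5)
Your proposal is correct, and it is essentially the paper's own proof in rescaled form: the paper takes $f$ equal, on $[-1,0]$, to the concave parabola $Q(x)=-\tfrac12x^2+\tfrac12\e x$ (so $\|f''\|=1$, $f''(0)=-1$), extends it to the right by letting $f''$ ramp linearly up to $0$ on $[0,\e]$ and then vanish (which, after multiplying by $\beta$ and setting $\e=2/\beta$, also settles the one construction detail you deferred, since the inequality $\norm{f-P_n}{}>A\norm{f''}{}$ is homogeneous in $f$), and derives the contradiction at the test point $2\e$, where $Q(2\e)=-\e^2<0$ but copositivity forces $P_n(2\e)\ge 0$ --- precisely your mechanism with $\beta\sim 2/\e$ large instead of $\e$ small. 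The only bookkeeping difference is that the paper controls the higher-order part of $P_n$ by bounding its monomial coefficients $a_3,\dots,a_n$ via Markov's inequality on $[-1,0]$ and summing the tail at $2\e$, whereas you bound $\norm{P_n'''}{}$ via Markov on all of $I$ and use the integral Taylor remainder; the two are interchangeable.
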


\begin{proof}
Let $n\ge 2$ and $A\ge 1$ be fixed, and  define $f\in C^2$ as
\[
f(x) =       \int_{0}^x \int_\e^t   f''(u) du dt ,
\quad
\text{ where }\;
f''(x) :=
\begin{cases}
-1 , & \text{if $x<0$,} \\
0, & \text{if $x >\e $,} \\
x/\e -1 , & \text{if $0 \le x \le \e $,}
\end{cases}
\]
and $\e\in (0,1/2)$. Note that $f\in \Delta^{(0)}(\{0\})$, $\norm{f''}{}=1$  and, for $x\le 0$,
\[
f(x) = Q(x) := - \frac{1}{2} x^2 + \frac{1}{2} \e x .
\]
Suppose that $P_n \in \Pn\cap \Delta^{(0)}(\{0\})$  satisfies  $P_n^{(i)}(0)=f^{(i)}(0)=Q^{(i)}(0)$, $0\le i \le 2$,   and such that
\[
\norm{f-P_n}{} \le  A \norm{f''}{}  =  A .
\]
Then,
\[
P_n(x) = Q(x) + a_3 x^3 + \dots + a_n x^n,
\]
and since
\[
\norm{P_n-Q}{[-1,0]} = \norm{P_n-f}{[-1,0]}   \le   A  ,
\]
by Markov's inequality, we conclude that
\[
 |a_i| =  |P_n^{(i)}(0)-Q^{(i)}(0)|/i! \le K  , \quad 3\le i \le n,
\]
where the constant  $K$ depends only on $A$  and $n$. Hence,
\[
0 \le P_n(2\e) \le Q(2\e) + nK (2\e)^3 = \e^2 (-1 + 8nK \e) <0 \quad \text{as $\e \to 0^+$,}
\]
which is a contradiction.
\end{proof}

\begin{lemma}[copositive approximation with interpolation at  $Y_s$: negative result for $f\in C^r$ with odd $r\ge 3$ and $Y_s = \{0\}$]\leavevmode \label{lemma30ys}
If $r\ge 3$ is \underline{odd}, then for any $n\ge r$  and $A>0$, there exists
$f\in C^r \cap  \Delta^{(0)}(\{0\})$ such that, for any polynomial $P_n \in \Pn$ such that $P_n \le 0$  on $(-1/n,0)$ and satisfies $P_n^{(i)}(0)=f^{(i)}(0)$, $0\le i \le r$,  we have
\be \label{march18}
\norm{f-P_n}{} > A \norm{f^{(r)}}{} .
\ee
\end{lemma}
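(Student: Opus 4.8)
The plan is to build a counterexample $f$ whose restriction to a right neighbourhood of the shape-change point $0$ is a degree-$r$ polynomial $Q$ that, continued to the left, develops a small positive lobe, while $f$ itself is kept $\le 0$ on $[-1,0]$. Fixing $n\ge r$, $A>0$, and a small parameter $\e$ (to be constrained at the end), I would set
\[
Q(x):=x\prod_{k=1}^{r-1}(x+k\e),
\]
a polynomial of degree $r$ with simple roots at $0,-\e,\dots,-(r-1)\e$. Thus $Q\ge 0$ on $[0,1]$, $Q'(0)=(r-1)!\,\e^{r-1}>0$ (so $Q<0$ just to the left of $0$, matching the sign of $\Delta^{(0)}(\{0\})$ there), and $Q$ is strictly positive on the lobe $(-2\e,-\e)$, with $Q(-3\e/2)\ge c_0\e^r$ for some $c_0=c_0(r)>0$. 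I then define $f:=Q$ on $[-\e,1]$ and, on $[-1,-\e]$, put $f:=Q-g$, where $g\ge 0$ is a $C^\infty$ function supported on the subintervals of $(-(r-1)\e,-\e)$ on which $Q>0$ and chosen so that $g\ge Q$ there. This forces $f\le 0$ on all of $[-1,0]$ while leaving $f=Q$ on a full neighbourhood of $0$; since each such bump has height of order $\e^r$ and width of order $\e$, one gets $\norm{g^{(r)}}{}=O(1)$, so $f\in C^r\cap\Delta^{(0)}(\{0\})$ and $\norm{f^{(r)}}{}$ is bounded above and below by positive constants depending only on $r$, uniformly in $\e$.

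Next, suppose for contradiction that $P_n\in\Pn$ is nonpositive on $(-1/n,0)$, satisfies $P_n^{(i)}(0)=f^{(i)}(0)$ for $0\le i\le r$, and obeys $\norm{f-P_n}{}\le A\norm{f^{(r)}}{}$. Because $f=Q$ near $0$, the interpolation conditions read $P_n^{(i)}(0)=Q^{(i)}(0)$ for $0\le i\le r$, so $P_n-Q$ has a zero of order $r+1$ at $0$; and since $\deg Q=r$, its $(r+1)$st derivative equals $P_n^{(r+1)}$. Writing $\xi:=-3\e/2$ and using Taylor's formula,
\[
|P_n(\xi)-Q(\xi)|\le\frac{|\xi|^{r+1}}{(r+1)!}\,\norm{P_n^{(r+1)}}{[\xi,0]}.
\]
Here I would bound $\norm{P_n}{[-1,0]}\le\norm{f}{[-1,0]}+A\norm{f^{(r)}}{}$, a constant independent of $\e$, and apply Markov's inequality on $[-1,0]$ to obtain $\norm{P_n^{(r+1)}}{[-1,0]}\le C(n,A)$; hence $|P_n(\xi)-Q(\xi)|\le C'(n,A)\,\e^{r+1}$.

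Finally I would combine the two facts. Since $\xi\in(-1/n,0)$ (once $\e<2/(3n)$) we have $P_n(\xi)\le 0$, whereas
\[
P_n(\xi)=Q(\xi)+\bigl(P_n(\xi)-Q(\xi)\bigr)\ge c_0\e^{r}-C'(n,A)\,\e^{r+1},
\]
which is strictly positive as soon as $\e<c_0/C'(n,A)$. Choosing $\e$ this small yields the contradiction $0\ge P_n(\xi)>0$, so no admissible $P_n$ exists and $\norm{f-P_n}{}>A\norm{f^{(r)}}{}$ always holds. The delicate point—and the reason the odd-$r$ case needs a genuinely different construction from the even case and from \lem{lemma30}—is the very first step: one must produce a degree-$r$ polynomial that is nonnegative on $[0,1]$, changes sign at $0$ in the manner forced by $\Delta^{(0)}(\{0\})$, and yet takes a positive value of size $\e^{r}$ at a point of $(-1/n,0)$. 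Placing the auxiliary roots at the scale $\e$ is exactly what makes the $O(\e^{r+1})$ interpolation error lose to the $O(\e^{r})$ polynomial value; note also that with only $r-1$ interpolation conditions the zero of $P_n-Q$ at $0$ drops to order $r$, the error becomes $O(\e^{r})$, and the argument (correctly) breaks down, consistent with interpolation to order $r-1$ being attainable.
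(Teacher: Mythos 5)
Your contradiction mechanism is sound and is essentially the paper's: force $P_n$ to agree with the polynomial continuation $Q$ of $f|_{[0,1]}$ to order $r$ at $0$, use Markov plus Taylor to get $|P_n(\xi)-Q(\xi)|\le C(n,A)\,\e^{r+1}$ at a point $\xi\in(-1/n,0)$ where $Q(\xi)\ge c_0\e^{r}$, and contradict $P_n(\xi)\le 0$. However, the function $f$ you construct does not exist, and this is a genuine gap. You require simultaneously that $f=Q$ on all of $[-\e,1]$, that $f\le 0$ on $[-1,-\e]$, and that $f\in C^r$. This is already impossible in $C^1$: the point $-\e$ is a \emph{simple} root of $Q$ and is the right endpoint of the positive lobe $(-2\e,-\e)$, so $Q'(-\e)=-(r-2)!\,\e^{r-1}<0$, and any $C^1$ function agreeing with $Q$ on $[-\e,1]$ satisfies $f(x)=Q'(-\e)(x+\e)+o(|x+\e|)>0$ for $x<-\e$ close to $-\e$. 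Equivalently, in terms of your correction: since $g$ is supported in the closed positive lobes and $f=Q-g$ must be $C^r$ across the lobe endpoint $-\e$, all derivatives of $g$ up to order $r$ must vanish there, so $g(x)=o(|x+\e|^{r})$ as $x\to-\e^-$, while you need $g\ge Q$ on the lobe with $Q(x)\sim (r-2)!\,\e^{r-1}|x+\e|$ vanishing only to first order. For $r\ge 3$ no such $g$ exists, and the same obstruction occurs at every lobe endpoint, since all roots of your $Q$ are simple.

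The gap is repairable without changing $Q$ or the rest of your argument: keep $f=Q$ only on $[-\e/2,1]$ (this is harmless, as $Q\le 0$ on $[-\e,0]$, so copositivity holds there and the interpolation conditions at $0$ still transfer to $Q$), and on the rest of $I$ set $f:=Q-M\e^{r}\phi$, where $M=M(r)$ is chosen so that $\max_{[-1,-\e]}Q\le M\e^{r}$ (possible since every positive lobe lies in $[-(r-1)\e,-\e]$, giving $Q\le (2r\e)^r$ there) and $\phi$ is a smooth cutoff with $\phi\equiv1$ on $[-1,-\e]$, $\phi\equiv0$ on $[-\e/2,1]$, $\|\phi^{(i)}\|\le C_i\e^{-i}$. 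Then $f\le 0$ on $[-1,0]$, $f\ge 0$ on $[0,1]$, $f\in C^\infty$, and $\|f^{(r)}\|\le r!+MC_r$ uniformly in $\e$, after which your estimate at $\xi=-3\e/2$ goes through verbatim. Note that the paper sidesteps the gluing issue entirely: it builds $f$ by integrating a nonnegative, compactly supported $r$-th derivative, so that $f$ automatically lies \emph{below} its polynomial continuation to the left of $0$, and then subtracts the constant $Q(0)$ to place the sign change at $0$; the positive lobe exists only for the continuation $\tQ$, never for $f$ itself.
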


 \begin{proof}
Let  $n\ge r$ and $A\ge 1$ be fixed, and let $f$ and $Q$ be the same as in the proof of \lem{lemma30} with $\lambda=0$ (but $r$ is odd now).
Then, for $x\ge 0$ (see \ineq{apr29}),
\begin{align*}
f(x) & = Q(x)  :=  \frac{1}{(r-1)!} \cdot \frac{1}{\e^2} \int_{-\e}^0  (x-t)^{r-1} (-t)(t+\e)   dt .
\end{align*}
 Note that $Q(0) = r \e^r/(r+2)!$ and
\begin{align*}
  Q(-2\e) & = \frac{1}{(r-1)!} \cdot \frac{1}{\e^2} \int_{0}^\e    (2\e-t)^{r-1}   t (\e-t)   dt
  = \frac{1}{(r+2)!} \left(2^{r+1}(r-2)+r+4 \right) \e^r .
\end{align*}
 Now,
 define
\[
\tf (x) := f(x)- Q(0) \andd \tQ(x):= Q(x)-Q(0) .
\]
Then,  $\tf\in C^r \cap  \Delta^{(0)}(\{0\})$, and we now
 suppose that $P_n \in\Pn$ is such that $P_n\le 0$ on $(-1/n,0)$,  $P_n^{(i)} (0) = \tf^{(i)}(0)=\tQ^{(i)}(0)$, $0\le i \le r$, and
 $\norm{\tf-P_n}{} \le  A \norm{\tf^{(r)}}{} =A/4$.
  As in the proof of \lem{lemma30},
 \[
\norm{P_n}{[-1,0]} \le \norm{P_n-\tf}{[-1,0]} + \norm{\tf}{[-1,0]} \le A\norm{\tf^{(r)}}{}+ \norm{f}{[-\e,0]}  + |Q(0)|    \le A ,
\]
and,
by Markov's inequality,
\[
\norm{P_n^{(r+1)}}{[-1, 0]} \le \left( 2n^2 \right)^{r+1}   \norm{P_n}{[-1, 0]} \le (2n^2)^{r+1} A.
\]

 Finally,  since $\tQ^{(r+1)}\equiv 0$ and  $P_n(-2\e) \le 0$,
\begin{align*}
\frac{4}{(r+2)!} \e^r   & \le
Q(-2\e)-Q(0) =  \tQ(-2\e) \le \tQ(-2\e) -  P_n(-2\e)   \\
&=
    \frac{1}{r!} \int_0^{-2\e} (-2\e-t)^{r} \left( \tQ^{(r+1)}(t) - P_n^{(r+1)}(t)   \right)   dt  \\
& \le \frac{1}{(r+1)!} (2\e)^{r+1} \norm{P_n^{(r+1)}}{[-2\e, 0]}
\le \frac{(4n^2)^{r+1} A}{(r+1)!} \e^{r+1} ,
\end{align*}
and we get a contradiction by letting $\e\to 0^+$.
\end{proof}

\begin{lemma}[copositive approximation with interpolation at  $Y_s$: negative result for $f\in C^r$ with even $r\ge 4$ and $Y_s = \{0\}$] \leavevmode \label{lemma50ys}
If $r\ge 4$ is \underline{even}, then for
 any $n\ge r-1$ and $A>0$, there exists
 $f\in C^r \cap  \Delta^{(0)}(\{0\})$ such that, for any polynomial $P_n \in \Pn\cap \Delta^{(0)}(\{0\})$   satisfying $P_n^{(i)}(0)=f^{(i)}(0)$, $0\le i \le r-1$,  we have
\[
\norm{f-P_n}{} > A \norm{f^{(r)}}{} .
\]
\end{lemma}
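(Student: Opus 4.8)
The plan is to recycle the construction from the proof of \lem{lemma50} (the odd-order, no-$Y_s$ negative result), now read with $r$ \emph{even}, and then shift it vertically so that it changes sign at the origin. Concretely, I let $f$ and $Q$ be exactly as in the proof of \lem{lemma50} (so $f^{(r-1)}$ is the bump $x^2(x+\e)^2/\e^3$ supported on $[-\e,0]$, $f\equiv 0$ on $[-1,-\e]$, $f=Q\in\Poly_{r-2}$ on $[0,1]$, $\norm{Q}{}\le c_0\e^2$, and $\norm{f^{(r)}}{}\le 1$), and set $\tf:=f-Q(0)$, $\tQ:=Q-Q(0)$. Since $f^{(r-1)}\ge 0$ on $[-\e,0]$ and $f$ together with all its lower derivatives vanishes at $-\e$, repeated integration gives $f^{(j)}\ge 0$ on $[-\e,0]$ for $0\le j\le r-1$; in particular $f'\ge 0$ there, while on $[0,1]$ one has $Q'\ge 0$ because $r-3\ge 1$. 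Hence $f$ is nondecreasing on $[-1,1]$, so $f\le f(0)=Q(0)$ on $[-1,0]$ and $f\ge f(0)$ on $[0,1]$; therefore $\tf\le 0$ on $[-1,0]$ and $\tf\ge 0$ on $[0,1]$, i.e. $\tf\in C^r\cap\Delta^{(0)}(\{0\})$, with $\norm{\tf^{(r)}}{}=\norm{f^{(r)}}{}\le 1$. I then assume, for contradiction, that there is $P_n\in\Pn\cap\Delta^{(0)}(\{0\})$ with $P_n^{(i)}(0)=\tf^{(i)}(0)=\tQ^{(i)}(0)$ for $0\le i\le r-1$ and $\norm{\tf-P_n}{}\le A\norm{\tf^{(r)}}{}\le A$.

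Because interpolation is imposed only up to order $r-1$, the difference $P_n-\tQ$ vanishes only to order $r$ at $0$, so
\[
P_n(x)=\tQ(x)+a_r x^r+a_{r+1}x^{r+1}+\dots+a_n x^n,\qquad a_r=P_n^{(r)}(0)/r!,
\]
and, exactly as in \lem{lemma50}, $\norm{P_n-\tQ}{[0,1]}=\norm{P_n-\tf}{[0,1]}\le A$ together with Markov's inequality gives $|a_i|\le K$ for $r\le i\le n$, with $K=K(A,r,n)$. The whole argument now reduces to pinning down the single free coefficient $a_r$, and this is the step that genuinely uses \emph{both} halves of the copositivity of $P_n$ — which is the essential difference from \lem{lemma50}, where only one-sided nonnegativity on a full neighborhood of $0$ was available.

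First I extract a lower bound on $-a_r$ from $P_n\le 0$ on $[-1,0]$. Taylor's formula at $0$ gives
\[
\tQ(-2\e)-P_n(-2\e)=-a_r(2\e)^r-\frac{1}{r!}\int_0^{-2\e}(-2\e-t)^r P_n^{(r+1)}(t)\,dt,
\]
and Markov's inequality on $[-1,0]$ bounds $\norm{P_n^{(r+1)}}{[-2\e,0]}$ by a constant depending only on $A,r,n$, so the integral is $O(\e^{r+1})$. A direct computation of the explicit integral representation of $Q$ (substituting $t=\e s$) shows $\tQ(-2\e)=Q(-2\e)-Q(0)=(c_3-c_2)\e^r$ with $c_3>c_2>0$, the strict inequality coming from $(2+s)^{r-2}>|s|^{r-2}$ for $s\in(-1,0)$; meanwhile $P_n(-2\e)\le 0$. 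Hence $(c_3-c_2)\e^r\le -2^r a_r\e^r+O(\e^{r+1})$, which forces $a_r<0$ and $|a_r|\ge (c_3-c_2)/2^{r+1}$ once $\e$ is small.

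Second, since $a_r<0$, I recover the \lem{lemma50} upper bound from $P_n\ge 0$ on $[0,1]$: for $x\in[0,1]$ one has $0\le P_n(x)\le \tQ(x)+a_r x^r+Bx^{r+1}$ with $B:=nK$, and evaluating at $x_0:=-a_r/(2B)=|a_r|/(2B)\in(0,1/2)$ (note that $x_0>0$ now lands in the half-interval where $P_n\ge 0$) yields, exactly as before, $|a_r|^{r+1}\le 2^{r+1}B^r\,\tQ(x_0)\le c\,B^r\e^2$ because $\norm{\tQ}{[0,1]}=O(\e^2)$; thus $|a_r|\le D\e^{2/(r+1)}$ with $D=D(A,r,n)$. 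Combining the two estimates gives $(c_3-c_2)/2^{r+1}\le|a_r|\le D\e^{2/(r+1)}$, which is false for $\e$ small enough, and this contradiction completes the proof. The main obstacle, and the reason this case is more delicate than \lem{lemma50}, is precisely the two-sided control of $a_r$: copositivity on $[-1,0]$ forces $a_r$ to be negative and bounded away from $0$, while copositivity on $[0,1]$ forces $|a_r|$ to be small, and only the vertical shift by $Q(0)$ renders $\tf$ copositive so that both constraints become simultaneously available.
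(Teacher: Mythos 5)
Your proof is correct and takes essentially the same route as the paper's: the same shifted function $\tf=f-Q(0)$, $\tQ=Q-Q(0)$ built from the construction of \lem{lemma50}, the same decomposition $P_n=\tQ+a_rx^r+\dots+a_nx^n$ with Markov-bounded coefficients, and the same two-sided pinning of $a_r$ (copositivity on $[-1,0]$ at a point $\sim -\e$ forces $a_r<0$ with $|a_r|$ bounded below by a constant, while copositivity on $[0,1]$ at $x_0=-a_r/(2B)$ together with $\norm{\tQ}{}=O(\e^2)$ forces $|a_r|=O(\e^{2/(r+1)})$). The only cosmetic differences are that you evaluate at $-2\e$ instead of the paper's $-3\e$, use Taylor's formula with the Markov bound on $P_n^{(r+1)}$ rather than the coefficient bound directly, and derive the lower bound on $|a_r|$ before the upper bound instead of the paper's single inequality followed by a case split.
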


\begin{proof} Let  $n\ge r$ and $A\ge 1$ be fixed, and let $f$ and $Q$ be the same as in the proof of \lem{lemma50} (but $r$ is even now). Recall (see \ineq{apr29.1})  that, for $x\ge 0$,
\[
f(x) =  Q(x) :=   \frac{1}{(r-2)!  } \cdot \frac{1}{\e^3}  \int_{- \e}^0 (x-t)^{r-2} t^2(t+\e)^2   dt
\]
and, in particular,
\[
f(0) = Q(0) =    {2 (r^2-r) \over (r+3)!} \e^r  = c_1 \e^r .
\]
Define
\[
\tf (x) := f(x)- Q(0) \andd \tQ(x):= Q(x)-Q(0) .
\]
Then, $\tf\in C^r \cap  \Delta^{(0)}(\{0\})$, $\tQ \in\Poly_{r-2}$,  and \ineq{march7} implies that
\[
\norm{\tQ}{}  \le  \norm{Q}{}  + |Q(0)| \le   c_0  \e^2  + c_1 \e^r \le (c_0+c_1)  \e^2 .
\]
We now
 suppose that $P_n \in\Pn \cap \Delta^{(0)}(\{0\})$ is such that    $P_n^{(i)} (0) = \tf^{(i)}(0)=\tQ^{(i)}(0)$, $0\le i \le r-1$, and
 $\norm{\tf-P_n}{} \le  A \norm{\tf^{(r)}}{} \le A$.
As in the proof of \lem{lemma50}, we conclude that
  $P_n$ can be written as
\[
P_n (x) = \tQ (x)
  + a_{r} x^{r}  + a_{r+1} x^{r+1} +   \dots + a_n x^n ,
\]
where
\[
|a_i|  \le K  , \quad r\le i \le n,
\]
where the constant  $K$ depends only on $A$, $r$  and $n$. This implies that
\be \label{trmarch19}
|P_n(x)-\tQ(x)-a_r x^r| \le B |x|^{r+1}, \quad x\in I,
\ee
where $B = nK$. We now have
\begin{align*}
\tQ(-3\e) & =  Q(-3\e) - Q(0) =  \frac{1}{(r-2)!  } \cdot \frac{1}{\e^3}  \int_{- \e}^0 \left[ (3\e+t)^{r-2} -t^{r-2}    \right]      t^2(t+\e)^2   dt  \\
& \ge
 \frac{2^{r-2}-1}{(r-2)!  } \cdot  \e^{r-5}  \int_{- \e}^0    t^2(t+\e)^2   dt = \frac{2^{r-2}-1}{30 (r-2)!  } \cdot  \e^{r}  =: c_2 \e^r ,
\end{align*}
and, since $P_n(-3\e) \le 0$,
\be \label{tmpmar19}
 0   \le - P_n(-3\e) \le -\tQ(-3\e)-a_r (-3\e)^r + B (3\e)^{r+1} \le \e^r \left(  -c_2  - 3^r a_r  + 3^{r+1} B \e \right).
\ee
If $a_r \ge 0$, then we immediately get a contradiction by letting $\e \to 0^+$.

Suppose now that $a_r <0$. Then,  $x_0= -a_r/(2B)$ is positive,  $P_n(x_0) \ge 0$, and so \ineq{trmarch19} implies
\[
0 \le P_n(x_0) \le \tQ(x_0) + a_r x_0^r + B x_0^{r+1} = \tQ(x_0) +(-1)^r  \frac{a_r^{r+1}}{2^{r+1} B^r} = \tQ(x_0) +   \frac{a_r^{r+1}}{2^{r+1} B^r}.
\]
Hence,
\[
-a_r^{r+1} \le 2^{r+1}B \tQ(x_0) \le 2^{r+1}B (c_0+c_1)  \e^2
\]
and, since $r+1$ is odd and $a_r <0$, we have
\[
|a_r| = -a_r \le \left[ 2^{r+1}B (c_0+c_1) \e^2 \right]^{1/(r+1)} .
\]
Therefore, \ineq{tmpmar19} implies
\[
0 \le \e^r \left( -c_2 +   O(\e^{2/(r+1)}) \right)  <0 \quad \text{as $\e \to 0^+$,}
\]
and we again arrive at a contradiction.
\end{proof}

\sect{Spline approximation with interpolatory constraints}

Given a collection of $\s$ points $Y=\{\y_j\}_{j=0}^{\s-1}$  with possible repetitions, $\y_0\le \y_1 \le \dots \le \y_{\s-1}$, we recall that, for each $j$, the multiplicity $m_j$ of $\y_j$ is the number of $\y_i$ such that $\y_i=\y_j$. Also, we let $l_j$ be the number of $\y_i=\y_j$ with $i\le j$. Suppose that a function $f$ is defined at all points in $Y$ and, moreover, for each $\y_j \in Y$, $f^{(l_j-1)}(\y_j)$ is defined as well. In other words, $f$ has $m_j-1$ derivatives at each point that has multiplicity $m_j$. Then, there is a unique Lagrange-Hermite polynomial $L_{\s-1}(\cdot; f, Y)$ of degree $\le \s-1$ that satisfies
\[ 
L_{\s-1}^{(l_j-1)}(\y_j; f, Y) = f^{(l_j-1)}(\y_j), \quad \text{for all }\; 0\le j \le \s-1 .
\]

The following lemma  is a generalization of the classical Whitney's theorem. It is used for construction of local polynomial pieces near interpolation points in Lemmas~\ref{omega3} and \ref{omega2}.

\begin{lemma}[see  \cite{kls-umzh}*{Theorem 5.2 and Lemma 3.1} or \cite{kls21}*{Lemma 1.12}]\leavevmode \label{umzhlem}
Let $r\in \N_0$ and $\s\in\N$ be such that $\s\ge r+1$,   and suppose that a set $V=\{\y_j\}_{j=0}^{\s-1} \subset [a,b]$ is such that, if  $\s\ge r+2$, then
$\Lambda_r(V) \ge \lambda (b-a)$,
where $0<\lambda \le 1$  and
\[
\Lambda_r(V) :=   \min_{0\le j \le \s-r-2}  (\y_{j+r+1}-\y_j) , \quad  \; \s\ge r+2 .
\]
(If $\s=r+1$, this condition is not needed.)  If $f\in C^r[a,b]$  then, for all $x\in [a,b]$,
\[
|f(x)-L_{\s-1}(x; f, V)| \le c(\s,\lambda) (b-a)^r \w_{\s-r}(f^{(r)}, b-a; [a,b]) .
\]
\end{lemma}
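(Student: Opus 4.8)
The plan is to reduce, by an affine change of variables, to the normalized interval $[0,1]$, then to strip off the ``large'' but irrelevant part of $f$ so that only a function with a small $r$-th derivative remains, and finally to prove a bound on the Hermite interpolation operator that is uniform over all admissible node configurations. First I would normalize: under $x=a+(b-a)u$, $u\in[0,1]$, the Lagrange--Hermite interpolation operator is equivariant, the separation condition becomes $\Lambda_r(\widetilde V)\ge\lambda$ for the rescaled nodes, and one checks $\w_{\s-r}(\widetilde f^{(r)},1;[0,1])=(b-a)^r\w_{\s-r}(f^{(r)},b-a;[a,b])$. Thus it suffices to prove $\|f-L_{\s-1}(\cdot;f,V)\|_{[0,1]}\le c(\s,\lambda)\,\w_{\s-r}(f^{(r)},1;[0,1])$, the factor $(b-a)^r$ reappearing upon undoing the substitution.

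Next I would reduce to a function with controllably small derivatives. By the classical Whitney theorem applied to $f^{(r)}$ with modulus order $\s-r$, there is $q\in\Poly_{\s-r-1}$ with $\|f^{(r)}-q\|\le c(\s)\,\w_{\s-r}(f^{(r)},1)$; integrating $q$ exactly $r$ times gives $P\in\Poly_{\s-1}$ with $P^{(r)}=q$. Subtracting in addition the degree-$(r-1)$ Taylor polynomial $T$ of $f-P$ at $0$, set $h:=f-P-T$, so that $h^{(i)}(0)=0$ for $0\le i\le r-1$ while $h^{(r)}=f^{(r)}-q$. Taylor's formula with integral remainder then yields $\|h^{(i)}\|\le\|h^{(r)}\|\le c(\s)\,\w_{\s-r}(f^{(r)},1)$ for all $0\le i\le r$. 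Since $P+T\in\Poly_{\s-1}$ and $L_{\s-1}$ reproduces $\Poly_{\s-1}$, we have $f-L_{\s-1}(\cdot;f,V)=h-L_{\s-1}(\cdot;h,V)$, so the lemma reduces to the operator estimate $\|h-L_{\s-1}(\cdot;h,V)\|\le c(\s,\lambda)\max_{0\le i\le r}\|h^{(i)}\|$.

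For this estimate I would use the Newton form with confluent divided differences, $L_{\s-1}(x;h,V)=\sum_{k=0}^{\s-1}[\y_0,\dots,\y_k;h]\prod_{i<k}(x-\y_i)$, giving $h(x)-L_{\s-1}(x;h,V)=[\y_0,\dots,\y_{\s-1},x;h]\prod_{i=0}^{\s-1}(x-\y_i)$, where the product is bounded by $1$ on $[0,1]$. The separation hypothesis $\Lambda_r(V)\ge\lambda$ forces every node to have multiplicity at most $r+1$, so all divided differences appearing here use only derivatives of $h$ up to order $r$ and are well defined for $h\in C^r$. For the initial segments with $k\le r$, the generalized mean value theorem gives $|[\y_0,\dots,\y_k;h]|=|h^{(k)}(\xi)|/k!\le\max_{i\le r}\|h^{(i)}\|$, a bound independent of node locations.

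The terms with $r<k\le\s-1$ are the main obstacle. Here a divided difference of order $k>r$ of a merely $C^r$ function can become large precisely when several nodes nearly coalesce, and correspondingly the individual fundamental polynomials may blow up like inverse powers of the node gaps; so a crude ``Lebesgue constant'' bound fails. The point to exploit is that, because $h^{(r)}$ and all lower derivatives of $h$ are uniformly bounded, the Hermite data attached to any cluster of nearly-coinciding nodes are nearly equal, which produces exactly the cancellation that keeps each such contribution bounded. Quantitatively, I would estimate these high-order divided differences by derivative-and-modulus bounds of the form $c(\s)\,\w_{\s-r}(h^{(r)},\cdot)$ that remain valid under $\Lambda_r(V)\ge\lambda$ (this is the role of the cited divided-difference lemma), thereby producing the uniform constant $c(\s,\lambda)$. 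Combining the two regimes gives the operator estimate, and undoing the normalization yields the stated inequality. The genuinely delicate issue throughout is uniformity as nodes merge, and it is resolved using only the weaker separation $\Lambda_r$ (which caps multiplicities at $r+1$) rather than full separation of all nodes.
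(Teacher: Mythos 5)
You should first note what the comparison target is: the paper never proves \lem{umzhlem} at all --- it imports it verbatim from \cite{kls-umzh}*{Theorem 5.2 and Lemma 3.1} (equivalently \cite{kls21}*{Lemma 1.12}), and in that reference the interpolation estimate (Theorem 5.2) is deduced from an estimate for confluent divided differences of $C^r$ functions (Lemma 3.1). Your architecture reproduces essentially that route, and your first two reduction steps are correct: the scaling identity, the Whitney-plus-Taylor reduction to a function $h$ with $\max_{0\le i\le r}\norm{h^{(i)}}{}\le\norm{h^{(r)}}{}\le c(\s)\,\w_{\s-r}(f^{(r)},1;[0,1])$, the reproduction of $\Poly_{\s-1}$ by $L_{\s-1}$, and the observation that $\Lambda_r(V)\ge\lambda$ caps all multiplicities at $r+1$ all check out.

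The problems are in the third step. First, a step that fails as written: you pass to the representation $h(x)-L_{\s-1}(x;h,V)=[\y_0,\dots,\y_{\s-1},x;h]\prod_{i=0}^{\s-1}(x-\y_i)$, discard the product as ``bounded by $1$'', and then aim at a bound on the divided difference that is uniform in $x$. No such bound exists: the augmented node set $V\cup\{x\}$ satisfies only a $\Lambda_{r+1}$-type separation, not a $\Lambda_r$-type one, so when $x$ approaches a node of multiplicity $r+1$ this divided difference requires $h^{(r+1)}$ at coincidence and blows up near it. Concretely, take $r=0$, $\s=2$, $V=\{0,1\}$ on $[0,1]$ and $h(t)=\sqrt{t}$: then $\bigl|[0,1,x;h]\bigr|\sim x^{-1/2}$ as $x\to 0^{+}$, and the error stays small only because of the factor $x$ you discarded. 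The repair is either to keep the product attached (this compensation is exactly what the cited proof exploits), or --- consistent with what your last two paragraphs actually do --- to bound $\norm{h}{}+\norm{L_{\s-1}(\cdot;h,V)}{}$ via the Newton coefficients $[\y_0,\dots,\y_k;h]$, $0\le k\le \s-1$, which involve only the nodes of $V$ and hence inherit the full $\Lambda_r(V)\ge\lambda$ separation. Second, and more fundamentally: the decisive estimate --- that for $r<k\le\s-1$ these coefficients are bounded by $c(\s,\lambda)\norm{h^{(r)}}{}$, the only place where $\lambda$ enters and the real mathematical content of the lemma --- is not proven in your proposal; you explicitly defer it to ``the cited divided-difference lemma'', i.e.\ to \cite{kls-umzh}*{Lemma 3.1}. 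So what you have is a correct reduction of \lem{umzhlem} to that cited estimate (which is indeed how the original reference proceeds), marred by one incorrect intermediate claim; the cluster-cancellation bound that you describe only qualitatively is precisely the part that would still have to be carried out for this to count as a proof.
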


\subsection{Local construction}

\begin{lemma}[onesided and intertwining approximation with interpolatory constraints: $f\in C^r$, $r\ge 1$] \leavevmode \label{lem32gen}
Suppose that $r\in\N$,  $f\in C^r[a,b]$ and $\lambda \in [a,b]$. Then, there exists $p := p(\cdot; f, k, r, \lambda)  \in\Poly_{k+r-1}$ such that  $p^{(i)}(\lambda)=f^{(i)}(\lambda)$, $0\le i\le r-1$,
\be \label{in2m9}
\norm{f-p}{[a,b]} \le c (b-a)^r \w_k(f^{(r)}, b-a; [a,b]), \quad \text{where $c=c(k,r)$,}
\ee
and
\begin{enumerate}[\rm (i)]
\item if $\lambda \in (a,b)$, then $p - f \ge 0$ on $[\lambda, b]$,  and  $(-1)^r\left( p - f \right) \ge 0$ on $[a,\lambda]$,
\item if $\lambda=a$, then $p - f \ge 0$ on $[a, b]$,
\item if $\lambda=b$, then $(-1)^r \left(p - f\right)  \ge 0$ on $[a, b]$.
\end{enumerate}
\end{lemma}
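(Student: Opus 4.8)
The plan is to handle all three cases at once by imposing a single one-sided condition on the top derivative. First I would construct a polynomial $P\in\Poly_{k-1}$ with $P\ge f^{(r)}$ on $[a,b]$ and $\norm{P-f^{(r)}}{[a,b]}\le c(k)\,\w_k(f^{(r)},b-a;[a,b])$, and then set
\[
p(x):=\sum_{i=0}^{r-1}\frac{f^{(i)}(\lambda)}{i!}(x-\lambda)^i+\frac{1}{(r-1)!}\int_\lambda^x(x-t)^{r-1}P(t)\,dt .
\]
Because $P$ has degree at most $k-1$, its $r$-fold antiderivative has degree at most $k+r-1$, so $p\in\Poly_{k+r-1}$; the added Taylor terms have degree at most $r-1$ and arrange that $p^{(i)}(\lambda)=f^{(i)}(\lambda)$ for $0\le i\le r-1$, while $p^{(r)}=P$. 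Subtracting Taylor's formula for $f$ about $\lambda$ from that for $p$ (the first $r$ terms coincide) gives
\[
p(x)-f(x)=\frac{1}{(r-1)!}\int_\lambda^x(x-t)^{r-1}\bigl[P(t)-f^{(r)}(t)\bigr]\,dt ,
\]
which is the single identity that drives both the error estimate and the sign pattern.

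To obtain $P$, I would apply Whitney's theorem to the continuous function $f^{(r)}$ (this is \lem{umzhlem} applied with its smoothness parameter equal to $0$ and $\s=k$), producing a near-best polynomial $\widetilde P\in\Poly_{k-1}$ with $\norm{f^{(r)}-\widetilde P}{[a,b]}\le c(k)\,\w_k(f^{(r)},b-a;[a,b])$. Shifting it up by its own uniform error, $P:=\widetilde P+\norm{f^{(r)}-\widetilde P}{[a,b]}$, leaves the degree at most $k-1$, forces $P\ge f^{(r)}$ on $[a,b]$, and at most doubles the error. Substituting into the displayed identity and using $\bigl|\int_\lambda^x(x-t)^{r-1}\,dt\bigr|\le(b-a)^r/r$ yields
\[
\norm{f-p}{[a,b]}\le\frac{(b-a)^r}{r!}\,\norm{P-f^{(r)}}{[a,b]}\le c(k,r)(b-a)^r\w_k(f^{(r)},b-a;[a,b]) ,
\]
which is exactly \ineq{in2m9}.

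The sign assertions then read off the same identity, now using $P-f^{(r)}\ge0$. For $x\ge\lambda$ the kernel $(x-t)^{r-1}$ is nonnegative on $[\lambda,x]$ and the integration runs in the positive direction, so $p-f\ge0$ on $[\lambda,b]$; for $x\le\lambda$, writing $\int_\lambda^x=-\int_x^\lambda$ and $(x-t)^{r-1}=(-1)^{r-1}(t-x)^{r-1}$ turns the identity into $(-1)^r\bigl(p(x)-f(x)\bigr)=\frac{1}{(r-1)!}\int_x^\lambda(t-x)^{r-1}[P-f^{(r)}]\,dt\ge0$, so $(-1)^r(p-f)\ge0$ on $[a,\lambda]$. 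This is exactly (i); part (ii) is the degenerate case $\lambda=a$, where $[a,\lambda]$ collapses to a point, and part (iii) the case $\lambda=b$, where $[\lambda,b]$ collapses. I do not expect a genuine technical obstacle here: the one load-bearing observation is that the single constraint $p^{(r)}\ge f^{(r)}$ automatically produces the alternating one-sidedness of $p-f$ on the two sides of $\lambda$ through the integral remainder, and the only point requiring care is that the upward shift making $P$ one-sided costs neither degree nor more than a constant factor in the rate.
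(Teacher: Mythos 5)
Your proposal is correct and follows essentially the same route as the paper: the paper's proof likewise takes a one-sided polynomial approximant $q\in\Poly_{k-1}$ of $f^{(r)}$ (with $q\ge f^{(r)}$ and near-best error), defines $p$ as the same Taylor-plus-$r$-fold-antiderivative expression, and reads off both \ineq{in2m9} and the sign pattern from the identity $p(x)-f(x)=\frac{1}{(r-1)!}\int_\lambda^x(x-t)^{r-1}\bigl(q(t)-f^{(r)}(t)\bigr)\,dt$. The only difference is that the paper simply asserts the existence of $q$, whereas you supply the standard Whitney-plus-upward-shift construction explicitly, which is a fine (and correct) way to fill in that step.
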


\begin{proof}
Since $f^{(r)}\in C$, there exists a polynomial $q \in \Poly_{k-1}$ such that
\[
\norm{f^{(r)}-q}{[a,b]} \le c \w_k(f^{(r)}, b-a; [a,b]) \andd q(x) \ge f^{(r)}(x), \; x\in [a,b] .
\]
We now define
\[
p(x) := \frac{1}{(r-1)!}  \int_\lambda^x (x-t)^{r-1} q(t) dt + \sum_{i=0}^{r-1} \frac{1}{i!}    f^{(i)}(\lambda)(x-\lambda)^{i}
\]
and note that $p^{(i)}(\lambda)=f^{(i)}(\lambda)$, $0\le i \le r-1$, and
\[
p(x)-f(x) = \frac{1}{(r-1)!} \int_\lambda^x (x-t)^{r-1} \left( q(t)-f^{(r)}(t) \right)  dt, \quad x\in [a,b].
\]
This  immediately implies   \ineq{in2m9}, and all other properties of $p$ are easily verified.
\end{proof}

\begin{lemma}[positive approximation with interpolatory constraints: $f\in C$]~ \label{lem33}
Suppose that $f\in C [a,b]$ is nonnegative on $[a,b]$, and $\lambda \in [a,b]$ is such that either $\lambda$ is one of the endpoints of $[a,b]$, or $\min\{|a-\lambda|, |b-\lambda|\} \ge \gamma  |b-a|$, $\gamma>0$. Then, there exists $l  \in\Poly_{1}$ such that $l$ is nonnegative on $[a,b]$, $l(\lambda)=f(\lambda)$ and
\be \label{omega2in}
\norm{f-l}{[a,b]} \le c  \w_2(f, b-a),
\ee
where $c$ is an absolute constant if $\lambda$ is an endpoint of $[a,b]$, or $c=c(\gamma)$ if $\lambda \in (a,b)$.
\end{lemma}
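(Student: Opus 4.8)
The plan is to produce $l$ in three inexpensive steps: start from an unconstrained near‑best linear approximant, shift it vertically so that it interpolates $f$ at $\lambda$, and finally rotate it about the interpolation point to restore nonnegativity, losing only a constant factor at each step. First I would apply \lem{umzhlem} with $r=0$, $\s=2$ and $V=\{a,b\}$ (so $\Lambda_0(V)=b-a$ and $\lambda'=1$) to obtain the linear Lagrange interpolant $L\in\Poly_1$ of $f$ at the endpoints, which satisfies $\norm{f-L}{[a,b]}\les c_0\,\w_2(f,b-a)$ with an absolute constant $c_0$. Since $f(\lambda)\ges 0$, the vertical shift $l_0(x):=L(x)+\bigl(f(\lambda)-L(\lambda)\bigr)$ is again in $\Poly_1$, satisfies $l_0(\lambda)=f(\lambda)$, and, because $|f(\lambda)-L(\lambda)|\les c_0\,\w_2(f,b-a)$, obeys $\norm{f-l_0}{[a,b]}\les 2c_0\,\w_2(f,b-a)$.

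If $l_0\ges 0$ on $[a,b]$ I would simply take $l:=l_0$. Otherwise I would use the fact that a linear function with $l_0(\lambda)=f(\lambda)\ges 0$ can be negative at \emph{at most one} endpoint of $[a,b]$, since a line that is negative at both endpoints is negative throughout and would violate $l_0(\lambda)\ges 0$. Say $l_0(a)<0$ (the case $l_0(b)<0$ is symmetric, and if $\lambda=a$ only the endpoint $b$ can offend). The decisive observation is that the offending endpoint value is automatically small: from $f(a)\ges 0$ and $|f(a)-l_0(a)|\les 2c_0\,\w_2(f,b-a)$ one gets $|l_0(a)|\les 2c_0\,\w_2(f,b-a)$.

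The correction is then a rotation about the fixed point $(\lambda,f(\lambda))$: I would let $l\in\Poly_1$ be the unique linear polynomial with $l(\lambda)=f(\lambda)$ and $l(a)=0$. By construction $l(\lambda)=f(\lambda)$, and since $l$ vanishes at the offending endpoint while retaining the value $f(\lambda)\ges 0$ at $\lambda$, it is nonnegative on all of $[a,b]$. As $l$ and $l_0$ agree at $\lambda$, their difference is linear and vanishes there, so its slope is $l_0(a)/(\lambda-a)$ and hence $\norm{l-l_0}{[a,b]}=|l_0(a)|\cdot\frac{\max\{\lambda-a,\,b-\lambda\}}{\lambda-a}\les |l_0(a)|\cdot\frac{b-a}{\lambda-a}$. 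This is precisely where the hypothesis enters: in the interior case $\lambda-a\ges\gamma(b-a)$, whence $\norm{l-l_0}{[a,b]}\les \gamma^{-1}|l_0(a)|\les 2c_0\gamma^{-1}\w_2(f,b-a)$, while if $\lambda$ is an endpoint the lever arm is the full length $b-a$ and the same bound holds with an absolute constant. The triangle inequality then gives $\norm{f-l}{[a,b]}\les\norm{f-l_0}{[a,b]}+\norm{l_0-l}{[a,b]}\les c\,\w_2(f,b-a)$, which is \ineq{omega2in}.

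The only genuine subtlety, and the step I would treat most carefully, is the bookkeeping of the rotation estimate: one must verify that a single slope adjustment suffices (because at most one endpoint is negative), that this adjustment changes the slope by exactly $|l_0(a)|/(\lambda-a)$, and hence the sup norm by at most $(b-a)/(\lambda-a)$ times $|l_0(a)|$. In this way the separation hypothesis $\min\{|a-\lambda|,|b-\lambda|\}\ges\gamma(b-a)$ is invoked exactly once and delivers the stated dependence of $c$ on $\gamma$ in the interior case and its absence when $\lambda$ is an endpoint.
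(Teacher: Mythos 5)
Your proof is correct, but it follows a genuinely different route from the paper's. The paper forms the \emph{two} Lagrange interpolants $l_1$ (at $a,\lambda$) and $l_2$ (at $\lambda,b$), each of which automatically satisfies the interpolation condition at $\lambda$, and applies the Whitney-type estimate of \lem{umzhlem} to both; since the node sets contain $\lambda$, this is exactly where the hypothesis $\min\{\lambda-a,b-\lambda\}\ge\gamma(b-a)$ is consumed, through the constant $c(\gamma)$ in Whitney's inequality. If either $l_1$ or $l_2$ is nonnegative the proof ends; in the remaining case ($l_1(b)<0$ and $l_2(a)<0$) the paper shows that $f(a)$, $f(b)$ and $f(\lambda)$, hence $\norm{f}{[a,b]}$ itself, are all bounded by $c\,\w_2(f,b-a)$, and takes the constant polynomial $l\equiv f(\lambda)$. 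You instead apply Whitney only once, in its most standard form (nodes at the two endpoints, absolute constant), and then restore the interpolation condition by a vertical shift and the nonnegativity by a rotation about $(\lambda,f(\lambda))$ that zeroes out the single possibly-offending endpoint; the separation hypothesis enters only through the elementary lever-arm factor $(b-a)/(\lambda-a)$ (or $(b-a)/(b-\lambda)$). Both arguments are complete: your key observations --- that a line with $l_0(\lambda)\ge 0$ can be negative at most at one endpoint, that the offending endpoint value is automatically $O(\w_2)$ because $f\ge 0$ there, and that the rotated line stays nonnegative on all of $[a,b]$ --- all check out, as does the norm bookkeeping. What your route buys is transparency about the role of $\gamma$: it is invoked exactly once, in a purely geometric step, which matches nicely the paper's \lem{tmplem} showing that the constant must indeed blow up as $\lambda$ approaches an endpoint; it also avoids Whitney with interior nodes entirely. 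What the paper's route buys is that it never needs to modify a polynomial at all --- each candidate is already an interpolant --- and its fallback case produces the slightly stronger byproduct $\norm{f}{[a,b]}\le c\,\w_2(f,b-a)$ when both chords dip negative.
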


\begin{proof}
Let $l_1, l_2\in\Poly_1$ be  linear polynomials interpolating $f$ at $a$ and $\lambda$, and at $\lambda$ and $b$, respectively. Then, by Whitney's inequality,  $\norm{f-l_i}{[a,b]} \le c  \w_2(f, b-a)$, $i=1,2$, and if at least one of them is nonnegative on $[a,b]$, then we let $l$ to be that nonnegative linear polynomial (this is obviously the case if $\lambda$ is an endpoint of $[a,b]$). Otherwise, $l_1(b) <0$ and $l_2(a) <0$, and so
\[
0\le f(a) <  f(a)-l_2(a) \le c \w_2(f, b-a) \andd 0\le f(b) <  f(b)-l_1(b) \le c \w_2(f, b-a) .
\]
Also, since in this case $f(\lambda) < \min\{ f(a), f(b)\}$, we  have $0\le f(\lambda) \le c \w_2(f, b-a)$. This implies that $\norm{l_i}{[a,b]} \le c \w_2(f, b-a)$, $i=1,2$, and so $\norm{f}{[a,b]} \le c \w_2(f, b-a)$. Hence, we can set $l \equiv  f(\lambda)$ on  $[a,b]$.
\end{proof}

It follows from the following lemma that the constant $c$ in \ineq{omega2in} cannot be made independent of $\gamma$.  At the same time, it is rather obvious that, if $\w_2$ in the estimate \ineq{omega2in} is replaced by $\w$, then  $c$ no longer  depends on $\gamma$.

\begin{lemma}[positive approximation with interpolatory constraints: negative result for $f\in C$ and $\lambda$ near the endpoints]  \leavevmode \label{tmplem}
For any $n\in\N$ and $A>0$, there exist  $f\in C$ which is nonnegative on $I$ and $\lambda \in (-1,1)$  such that, for any nonnegative on $I$ polynomial $P_n \in \Pn$ satisfying $P_n(\lambda)=f(\lambda)$, we have
\[
\norm{f-P_n}{} > A \w_2(f, 1) .
\]
\end{lemma}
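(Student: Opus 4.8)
The plan is to exhibit, for the given $n$ and $A$, a single nonnegative ``ramp'' whose corner is placed extremely close to an endpoint of $I$, and to show that nonnegativity together with the interpolation constraint forces every admissible $P_n$ to stay far from it, while $\w_2(f,1)$ remains small precisely because the corner sits near the boundary. This is the $\w_2$-analogue of \lem{lemma23}; the structural difference is that $\w_2$ annihilates only affine functions, so in place of the parabolic dip used there (which could be subtracted without affecting $\w_3$) I must use a piecewise-linear $f$ and compare it with a \emph{line}.

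Concretely, I would assume without loss of generality that $A\ge 1$, set $\beta:=\bigl(2n^2(A+1)^2\bigr)^{-1}$, put $\lambda:=-1+\beta\in(-1,1)$, and take $f(x):=(x-\lambda)_+=\max\{x-\lambda,0\}$. Then $f\in C$, $f\ge 0$ on $I$ and $f(\lambda)=0$. The first key point is the evaluation $\w_2(f,1)=\beta$: the function $f$ has a single corner at $\lambda$ (slope jump $1$), so $\Delta_h^2 f(x)=(h-|x-\lambda|)_+$ on the set of admissible $x$, a tent of height $h$. Because the interval $[-1,\lambda]$ on which $f\equiv 0$ has length only $\beta$, for each $h$ the supremum of this tent over $x$ with $[x-h,x+h]\subset I$ equals $h$ when $h\le\beta$ and equals $\beta$ when $h>\beta$ (attained at $x=-1+h$), so the supremum over $0<h\le 1$ is exactly $\beta$. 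This is where ``$\lambda$ near the endpoint'' is essential: for an interior $\lambda$ the same corner would give $\w_2(f,1)$ of order $1$, and then the statement would have to fail, in accordance with \lem{lem33}.

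For the lower bound I argue by contradiction. Suppose some $P_n\in\Pn$ is nonnegative on $I$, satisfies $P_n(\lambda)=f(\lambda)=0$, and $\norm{f-P_n}{}\le A\,\w_2(f,1)=A\beta$. Since $\lambda$ is interior and $P_n\ge 0=P_n(\lambda)$, the point $\lambda$ is a global minimum of $P_n$, hence $P_n'(\lambda)=0$. Comparing with the line $Q(x):=x-\lambda$, for which $\norm{f-Q}{}=\max_{x\in[-1,\lambda]}|x-\lambda|=\beta$, I get $\norm{Q-P_n}{}\le\norm{Q-f}{}+\norm{f-P_n}{}\le(A+1)\beta$, while $(Q-P_n)'(\lambda)=Q'(\lambda)-P_n'(\lambda)=1$. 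Bernstein's inequality at the interior point $\lambda$ applied to $Q-P_n\in\Pn$ then yields
\[
1=\bigl|(Q-P_n)'(\lambda)\bigr|\le \frac{n}{\sqrt{1-\lambda^2}}\,\norm{Q-P_n}{}\le \frac{n(A+1)\beta}{\sqrt{1-\lambda^2}}.
\]
Since $1-\lambda^2=(2-\beta)\beta\ge\beta$, this gives $1\le n(A+1)\sqrt{\beta}$, i.e. $\beta\ge\bigl(n(A+1)\bigr)^{-2}$, which contradicts the choice $\beta=\bigl(2n^2(A+1)^2\bigr)^{-1}$. Hence no such $P_n$ exists, that is, every nonnegative $P_n\in\Pn$ with $P_n(\lambda)=f(\lambda)$ satisfies $\norm{f-P_n}{}>A\,\w_2(f,1)$.

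I expect the only genuinely delicate step to be the identity $\w_2(f,1)=\beta$, where one must check carefully that placing the corner within distance $\beta$ of $-1$ caps the second modulus at $\beta$ for \emph{every} admissible step $h\le 1$, not merely the small ones. The remaining ingredients are routine: the interior-minimum argument is what supplies $P_n'(\lambda)=0$, and it is exactly why an interior $\lambda$ (rather than an endpoint) is required, which is consistent with the positive results holding at $\lambda=\pm 1$; and the slope comparison is carried out against a line because $\w_2$ kills affine functions. If one prefers to avoid Bernstein's inequality near the boundary, the same conclusion follows from the pointwise Markov--Bernstein estimate $|R_n'(x)|\le c\,\rho_n(x)^{-1}\norm{R_n}{}$ together with $\rho_n(\lambda)\ge n^{-2}$.
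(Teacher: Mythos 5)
Your proof is correct and takes essentially the same approach as the paper's: a ramp whose corner sits at distance $\beta$ from $-1$ (the paper's $f$ is exactly yours rescaled by $1/\beta$), comparison with the full line $Q$, the observation that nonnegativity plus interpolation at the interior point forces $P_n'(\lambda)=0$, and an inverse (Markov/Bernstein) inequality to reach a contradiction. The only cosmetic differences are that you compute $\w_2(f,1)=\beta$ exactly where the paper just bounds $\w_2(f,1)=\w_2(f-Q,1)\le 4\norm{f-Q}{}$, you apply pointwise Bernstein at $\lambda$ instead of global Markov, and you fix $\lambda$ explicitly rather than letting $\lambda\to -1^+$ at the end.
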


\begin{proof}
Let $n\in \N$ and $A>0$ be fixed, pick $f(x) := \max\left\{(x-\lambda)/(\lambda+1), 0 \right\}$ and suppose that $P_n \in \Pn$ is nonnegative on $I$, satisfies  $P_n(\lambda)=f(\lambda) = 0$ (this necessarily implies that $P_n'(\lambda)=0$) and such that
$\norm{f-P_n}{} \le  A  \w_2(f, 1)$.
If $Q(x) := (x-\lambda)/(\lambda+1)$, then
$\norm{f-Q}{} = \norm{Q}{[-1,\lambda]} = 1$
and
\[
\w_2(f, 1) = \w_2(f-Q, 1) \le 4 \norm{f-Q}{} = 4.
\]
Hence,
\[
\norm{P_n-Q}{} \le   \norm{P_n-f}{} + \norm{f-Q}{}   \le 4A +1 ,
\]
and so, by Markov's inequality,
\[
1/(\lambda+1) =  Q'(\lambda) =  Q'(\lambda)-P_n'(\lambda)  \le \norm{Q'-P_n'}{}  \le   n^2 \left( 4A +1 \right).
\]
We now get a contradiction by letting $\lambda \to -1^+$.
\end{proof}

\begin{lemma}[positive approximation with interpolatory constraints: $f\in C^1$] \leavevmode  \label{omega3}
Suppose that $f\in C^1[a,b]$ is nonnegative on $[a,b]$, and $\lambda \in [a,b]$  is such that $\min\{|a-\lambda|, |b-\lambda|\} \ge \gamma  |b-a|$, $\gamma >0$.  Then, there exists $p \in\Poly_{3}$ such that $p\ge 0$  on $[a,b]$, $p (\lambda)=f(\lambda)$ and
\be \label{omega3ineq}
\norm{f-p }{[a,b]} \le c (b-a) \w_3(f', b-a; [a,b]), \quad \text{where $c=c(\gamma)$.}
\ee
\end{lemma}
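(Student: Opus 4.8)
The plan is to reduce the construction of $p$ to a \emph{copositive} approximation (one-sided, but with a single sign change at $\lambda$) of the continuous function $g:=f'$ by quadratics, and then to integrate. Concretely, I look for $q\in\Poly_2$ and set
\[
p(x):=f(\lambda)+\int_\lambda^x q(t)\,dt\in\Poly_3 ,
\]
so that $p(\lambda)=f(\lambda)$ automatically and, since $f\in C^1$,
\[
p(x)-f(x)=\int_\lambda^x\bigl(q(t)-f'(t)\bigr)\,dt , \qquad x\in[a,b].
\]
The key observation is that if $q-f'$ has the \emph{same sign} as $x-\lambda$ on $[a,b]$ — that is, $q\le f'$ on $[a,\lambda]$ and $q\ge f'$ on $[\lambda,b]$ — then the last integral is nonnegative for $x\ge\lambda$ (the integrand is $\ge0$) and also for $x\le\lambda$ (there the integrand is $\le0$ while the orientation of the integral is reversed). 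Hence $p\ge f\ge0$ on all of $[a,b]$, which delivers the nonnegativity of $p$ and the interpolation $p(\lambda)=f(\lambda)$ simultaneously; only the hypothesis $f\ge0$ is used here.

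With such a $q$ in hand the error estimate is immediate: from $p-f=\int_\lambda^x(q-f')$ we get
\[
\norm{f-p}{[a,b]}\le (b-a)\,\norm{f'-q}{[a,b]},
\]
so \ineq{omega3ineq} follows as soon as $\norm{f'-q}{[a,b]}\le c(\gamma)\,\w_3(f',b-a;[a,b])$. Thus the lemma is reduced to the following local statement: \emph{for $g=f'\in C[a,b]$ and an interior point $\lambda$ with $\min\{\lambda-a,\,b-\lambda\}\ge\gamma(b-a)$ there exists $q\in\Poly_2$ with $(q-g)(x)\,(x-\lambda)\ge0$ on $[a,b]$ and $\norm{g-q}{[a,b]}\le c(\gamma)\,\w_3(g,b-a;[a,b])$.} This is a Whitney-type copositive estimate with one sign change at $\lambda$; the appearance of a constant depending on the location of $\lambda$ (here on $\gamma$) is to be expected in view of \lem{tmplem}, and the order $\w_3$ is exactly the copositive threshold (cf.\ the copositive entries of Table~\ref{tablewithout}, where $\w_3$ is attainable but $\w_4$ is not).

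The construction of $q$ is the heart of the matter, and it is genuinely more delicate than the one-sided constructions used earlier. In \lem{lem32gen} one simply takes $q\ge f^{(r)}$, but here an ordinary one-sided quadratic keeps $q-f'$ of a fixed sign and therefore fails the required sign pattern on one of the two subintervals. I would instead start from a near-best quadratic $\bar q\in\Poly_2$ with $\norm{g-\bar q}{[a,b]}\le c\,\w_3(g,b-a;[a,b])$ (available from Whitney's inequality, e.g.\ \lem{umzhlem} with $r=0$ and $\s=3$), and then correct $\bar q$ by a multiple of a fixed quadratic having a simple zero at $\lambda$, forcing $(q-g)(x-\lambda)\ge0$ and estimating the correction separately on $[a,\lambda]$ and on $[\lambda,b]$; the bound $\min\{\lambda-a,\,b-\lambda\}\ge\gamma(b-a)$ is precisely what keeps this correction of size $O(\w_3)$ and produces the constant $c(\gamma)$. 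The main obstacle is that the sign of $q-g$ must \emph{vanish exactly at $\lambda$ and flip there}, so a correction of bounded size cannot by itself fix the sign in an arbitrarily small neighborhood of $\lambda$; matching $q(\lambda)=g(\lambda)$ together with the one-sided inequalities on each side — the genuinely copositive (rather than merely one-sided) ingredient — is the step that carries the real technical weight.
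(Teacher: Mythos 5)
Your reduction is set up correctly as far as it goes — if a quadratic $q$ with $(q-f')(x)(x-\lambda)\ge 0$ on $[a,b]$ and $\norm{f'-q}{[a,b]}\le c(\gamma)\,\w_3(f',b-a;[a,b])$ existed, then $p(x)=f(\lambda)+\int_\lambda^x q$ would indeed do everything you claim — but the sub-problem you reduce to is not merely delicate, it is \emph{false}, and its failure has nothing to do with the error bound: for some nonnegative $f\in C^1$ there is no polynomial $q$ of any degree satisfying the sign condition alone. Take $[a,b]=[-1,1]$, $\lambda=0$, and $f(x)=\tfrac23 x_+^{3/2}$, which is nonnegative and in $C^1$, with $g:=f'=x_+^{1/2}$. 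Your condition forces $q\le g$ on $[-1,0]$ and $q\ge g$ on $[0,1]$, hence $q(0)=0$ by continuity from both sides, and then for $x>0$
\[
\frac{q(x)-q(0)}{x}\;\ge\;\frac{\sqrt{x}}{x}\;=\;\frac{1}{\sqrt{x}}\;\longrightarrow\;+\infty \qquad (x\to 0^+),
\]
contradicting the finiteness of $q'(0)$. The conceptual reason is that your construction proves more than the lemma asks: it yields $p\ge f$ on all of $[a,b]$, i.e.\ \emph{onesided} approximation with interpolation at an interior point, and the paper shows this is impossible for $f\in C^1$ — see \lem{osneg}(i) with $r=1$, whose witness function is nonnegative, and the corresponding entry of Table~\ref{onetable} (``not possible if $A_p \cap \inter I \ne \emptyset$''). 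Relatedly, what you need from $q$ is an \emph{intertwining} condition ($q-f'\in\Delta^{(0)}(\{\lambda\})$ up to orientation), not a copositivity condition, and intertwining approximation of merely continuous functions is exactly the regime of the paper's negative results. So the obstacle you flag at the end as ``the step that carries the real technical weight'' cannot be overcome by any correction term; the approach itself has to be abandoned.

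The paper's proof works precisely because it exploits the slack between $p\ge 0$ and $p\ge f$: the cubic is allowed to cross $f$. After normalizing $[a,b]=I$, $\lambda=0$, it starts from the Lagrange--Hermite cubic $L_0$ with $L_0(\pm1)=f(\pm1)$ and $L_0^{(i)}(0)=f^{(i)}(0)$, $i=0,1$, so that $\norm{f-L_0}{}\le c\,\w_3(f',1)$ by \lem{umzhlem}, and then perturbs $L_0$ by an amount $O(\w_3(f',1))$ in one of two ways according to whether $|f'(0)|$ is small or large relative to $\w_3(f',1)$: in the first case the linear term is simply deleted, in the second a multiple of $(x+x^2)\,\w_3(f',1)$ is added; nonnegativity of the resulting cubic is then verified by elementary calculus using $f(\pm1)\ge0$ and $f(0)\ge0$. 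Note that both perturbations destroy the relations $p'(0)=f'(0)$ and $p\ge f$ — precisely the properties your integration scheme is built to enforce, and precisely the ones that cannot be had.
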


\begin{proof} Assume, for convenience, that $[a,b]=I$ and $\lambda = 0$.

Let $L_0(x)$ be the Lagrange-Hermite polynomial of degree $\le 3$ such that $L_0^{(i)}(0) = f^{(i)}(0)$, $i=0,1$ and $L_0(\pm 1)=f(\pm 1)$:
\[
L_0(x) = f(0)+f'(0)\, x +   \left[ \frac{f(-1)+f(1)}{2} - f(0) \right]  \, x^2 + \left[ \frac{f(1)-f(-1)}{2} - f'(0) \right]  \, x^3 .
\]
 It follows from \lem{umzhlem} with $\s=4$ and $r=1$ that
\[
\norm{f-L_0}{} \le c  \w_3(f', 1).
\]
This also implies that
\be \label{trder}
\norm{f'-L_0'}{} \le A  \w_3(f', 1),
\ee
where $A$ is some positive constant that we now consider fixed.

Consider the following two cases:

\medskip

\noindent
{\bf Case 1:} $|f'(0)| \le 2A \w_3(f',1)$ \\
We define
\[
\tp (x) := f(0)  +   \left[ \frac{f(-1)+f(1)}{2} - f(0) \right]  \, x^2 + \left[ \frac{f(1)-f(-1)}{2}   \right]  \, x^3 =: f(0) + ax^2 + bx^3
\]
and note that $\norm{L_0-\tp}{} \le |f'(0)|$ which implies that     $\norm{f-\tp}{} \le c\w_3(f',1)$. Clearly, $\tp(0)=f(0)$, and so it remains to show that $\tp$ is nonnegative on $I$.

Note that $\tp (1) = f(1)$ and $\tp (-1) = f(-1)$. If $b=0$, then $\tp$ is clearly nonnegative on $I$, and so suppose that $b\ne 0$ and, in fact,
without loss of generality,  we can assume   that $b > 0$.

Clearly,  $\tp' (x) =0 $ iff $x =0$ or $x=  -2a/(3b)=: x_0$.
Now, if $a \ge 0$, then   $x_0   \le 0$, and   $\tp\downarrow$ on $[x_0,0]$ and $\tp\uparrow$, otherwise.
 This implies that $\tp \ge 0$ on $I$.

If $a<0$, then $x_0 >0$, and $\tp\downarrow$ on $[0,x_0]$ and $\tp \uparrow$, otherwise. If $x_0\ge 1$, then $\tp$ is clearly nonnegative on $I$ .
Hence, it remains to show that $\tp(x_0) \ge 0$ in the case $0<x_0 <1$ (\ie if $0<-2a < 3 b$). Indeed, we have
\[
\tp(x_0) = f(0)+ \frac{4}{27} \cdot \frac{a^3}{b^2} > f(0)- \frac{b}{2} > \frac{f(-1)+f(1)}{2} - \frac{f(1)-f(-1)}{4} = \frac{3f(-1)+f(1)}{4} \ge  0.
\]

\medskip

\noindent
{\bf Case 2:} $|f'(0)| >  2A \w_3(f',1)$ \\
Without loss of generality, we  assume that $f'(0)<0$.
Define
\[
\tp(x):= L_0(x)+ A (x+x^2) \w_3(f',1)
\]
and note that $\norm{f-\tp}{} \le c \w_3(f',1)$ and $\tp(0)=f(0)$, and so we only need to show that $\tp$ is nonnegative on $I$.
It follows from \ineq{trder} that, for $x\in [0,1]$,
\[
\tp'(x) = L_0'(x)+ A(1+2x) \w_3(f',1) \ge L_0'(x)+ A  \w_3(f',1) \ge f'(x) ,
\]
and so
\[
\tp (x)-f(x) = \int_0^x [\tp'(u)-f'(u)]  \, du  \ge 0 , \quad 0\le x \le 1.
\]
For convenience, we denote
\[
\tp(x) = f(0)+ ax + bx^2 + cx^3 ,
\]
and note that
\[
a = \tp'(0)= L_0'(0)+ A \w_3(f',1) = f'(0)+ A \w_3(f',1) < - A \w_3(f',1) < 0 .
\]
 If $c \le 0$, then, for any $x \in [-1,0)$, we have
\[
\tp(x) =   \tp(-x) + 2a x  + 2c x^3 \ge \tp(-x) \ge 0.
\]
Hence, it   remains to consider the case   $c>0$.

If $\tp'\le 0$ on $[-1,0]$, then $\tp\downarrow$   on $[-1,0]$ and so is nonnegative there. Hence, we can assume that $\tp'$ is  positive somewhere in $[-1,0]$.
Since $c>0$, this implies that there exists $\alpha_0 \in (-1,0)$ such that $\tp' >0$ on $[-1,\alpha_0)$, and $\tp' < 0$ on $(\alpha_0,0]$. Hence, $\tp \uparrow$ on $[-1,\alpha_0]$ and $\tp\downarrow$ on $[\alpha_0, 0]$.
This implies that we only need to verify that $\tp(-1)\ge 0$ in order to conclude that $\tp$ is nonnegative on the whole interval $I$.
We have
\[
\tp(-1) = L_0(-1) = f(-1) \ge 0 ,
\]
and the proof is now complete.
\end{proof}

We remark that if $\lambda$ in the statement of \lem{omega3} is one of the endpoints of $[a,b]$, then a much stronger result is valid (see \lem{lem32gen} which implies an analogous result for positive approximation).
Also, it follows from the following lemma   that the constant $c$ in \ineq{omega3ineq} cannot be made independent of $\gamma$.

\begin{lemma}[positive approximation with interpolatory constraints: negative result for $f\in C^1$ and $\lambda$ near the endpoints]  \leavevmode \label{tmplemder}
For any $n\in\N$ and $A>0$, there exist  $f\in C^1$ which is nonnegative on $I$ and $\lambda \in (-1,1)$  such that, for any nonnegative on $I$ polynomial $P_n \in \Pn$ satisfying $P_n(\lambda)=f(\lambda)$, we have
\[
\norm{f-P_n}{} > A \w_3(f', 1) .
\]
\end{lemma}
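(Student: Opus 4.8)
The plan is to imitate the proof of \lem{tmplem} almost verbatim, replacing the linear comparison function used there by a cubic one and again letting $\lambda\to-1^+$ (by symmetry the endpoint $-1$ is taken without loss of generality). Writing $d:=1+\lambda$ and choosing the comparison polynomial $Q(x):=(x-\lambda)^3$, I would take the test function
\[
f(x):=\max\{(x-\lambda)^3,0\}=(x-\lambda)_+^3 ,
\]
which is nonnegative, lies in $C^1$ (in fact $C^2$), and satisfies $f(\lambda)=f'(\lambda)=0$. Since $f-Q$ vanishes on $[\lambda,1]$ and equals $-(x-\lambda)^3$ on $[-1,\lambda]$, we have $\norm{f-Q}{}=\norm{Q}{[-1,\lambda]}=d^3$; likewise $f'-Q'=-3(x-\lambda)^2$ on $[-1,\lambda]$ and vanishes on $[\lambda,1]$, so, because $Q'$ is quadratic and $\w_3$ annihilates quadratics,
\[
\w_3(f',1)=\w_3(f'-Q',1)\le 8\norm{f'-Q'}{}=24\,d^2 .
\]
The decisive feature is the odd power: $Q(-1)=(-d)^3=-d^3<0$, whereas any admissible $P_n$ is forced to satisfy $P_n(-1)\ge 0$.

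The argument then runs by contradiction. Suppose $P_n\in\Pn$ is nonnegative on $I$, satisfies $P_n(\lambda)=f(\lambda)=0$ and $\norm{f-P_n}{}\le A\,\w_3(f',1)$. Because $\lambda$ is an interior minimum of the nonnegative polynomial $P_n$, we also get $P_n'(\lambda)=0$. Setting $R:=P_n-Q$, we then have $R(\lambda)=R'(\lambda)=0$ and, for $d\le1$,
\[
\norm{R}{}\le\norm{P_n-f}{}+\norm{f-Q}{}\le 24A\,d^2+d^3\le(24A+1)d^2 .
\]
The double zero of $R$ at $\lambda$ lets me use Taylor's formula to second order, $R(-1)=\int_\lambda^{-1}(-1-t)R''(t)\,dt$, so that $|R(-1)|\le\tfrac{d^2}{2}\norm{R''}{}$; two applications of Markov's inequality (for polynomials of degree $\le N:=\max\{n,3\}$) give $\norm{R''}{}\le N^4\norm{R}{}$, hence $|R(-1)|\le\tfrac12 N^4(24A+1)d^4$. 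On the other hand $0\le P_n(-1)=Q(-1)+R(-1)=-d^3+R(-1)$ forces $R(-1)\ge d^3$, and comparing the two bounds yields $d^3\le\tfrac12 N^4(24A+1)d^4$, i.e. $d\ge 2/\big(N^4(24A+1)\big)$. Choosing $\lambda$ close enough to $-1$ that $0<d<\min\{1,\,2/(N^4(24A+1))\}$ contradicts this, which proves the claimed strict inequality.

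I expect the main difficulty to be selecting the comparison function, not any individual estimate. In the $C^0$ situation of \lem{tmplem} the contradiction comes from a large one-sided first derivative of $f$ at $\lambda$; here $f\in C^1$ forces $f'(\lambda)=0$ and hence $P_n'(\lambda)=0$, so that mechanism disappears, and one cannot instead engineer a large negative $Q''(\lambda)$ because nonnegativity of $f$ near its zero $\lambda$ forbids it. The idea is therefore to move the ``signal'' from $\lambda$ to the nearby endpoint $-1$ through the odd power $(x-\lambda)^3$, and to observe that the crude bound $\norm{P_n-Q}{}\lesssim d^2$ is by itself too weak (it only yields $d^3\lesssim d^2$); it is the sharper estimate $|R(-1)|\lesssim d^4$, available precisely because $R$ has a double zero at $\lambda$ (which in turn uses nonnegativity twice, via $P_n(\lambda)=0$ and $P_n'(\lambda)=0$), that dominates $|Q(-1)|=d^3$ and closes the argument.
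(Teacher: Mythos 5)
Your proposal is correct and follows essentially the same route as the paper's proof: the paper uses the test function $f(x)=\max\{(x-\lambda)^3/(\lambda+1)^2,0\}$, which is just your $(x-\lambda)_+^3$ rescaled by $d^{-2}$ (immaterial, since the target inequality is homogeneous in $f$), and then runs the identical argument — double zero of $P_n-Q$ at $\lambda$, Markov's inequality for the second derivative, Taylor expansion evaluated at $-1$, and the sign clash between $Q(-1)<0$ and $P_n(-1)\ge 0$ as $\lambda\to -1^+$. Your handling of the degree via $N=\max\{n,3\}$ is in fact slightly more careful than the paper's.
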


\begin{proof}
Let $n\in \N$ and $A>0$ be fixed, pick $f(x) := \max\left\{(x-\lambda)^3/(\lambda+1)^2, 0 \right\}$ and suppose that $P_n \in \Pn$ is nonnegative on $I$, satisfies  $P_n(\lambda)=f(\lambda) = 0$ (this necessarily implies that $P_n'(\lambda)=0$) and such that
$\norm{f-P_n}{} \le  A  \w_3(f', 1)$.
If $Q(x) := (x-\lambda)^3/(\lambda+1)^2$, then
\[
\norm{f'-Q'}{} = \norm{Q'}{[-1,\lambda]} = 3 , \quad \norm{f-Q}{} = \norm{Q}{[-1,\lambda]} = \lambda+1 ,
\]
and
\[
\w_3(f', 1) = \w_3(f'-Q', 1) \le 8 \norm{f'-Q'}{} = 24.
\]
Hence,
\[
\norm{P_n-Q}{} \le   \norm{P_n-f}{} + \norm{f-Q}{}   \le 24A +2 ,
\]
and so, by Markov's inequality,
\[
\norm{P_n''-Q''}{} \le  n^4 (24A+2) =: K,
\]
and so, since $P_n^{(i)}(\lambda)= Q^{(i)}(\lambda)$, $i=0,1$,
\[
\lambda+1= -Q(-1)\le P_n(-1)-Q(-1) = \int_\lambda^{-1}(-1-t) \left[ P_n''(t)-Q''(t)\right] \, dt \le  K (\lambda+1)^2/2 ,
\]
and we get a contradiction by letting $\lambda \to -1^+$.
\end{proof}

\begin{lemma}[copositive approximation with interpolation at  $Y_s$: $f\in C^1$] \leavevmode  \label{omega2}
Suppose that $f\in C^1[a,b]$ is such that $(x-\lambda) f(x) \ge 0$, $x\in [a,b]$,
where  $\lambda \in [a,b]$  is such that $\min\{|a-\lambda|, |b-\lambda|\} \ge \gamma  |b-a|$, $\gamma >0$.  Then, there exists $p \in\Poly_{2}$ such that
$(x-\lambda) p(x) \ge 0$, $x\in [a,b]$,
  $p^{(i)}(\lambda)=f^{(i)}(\lambda)$, $i=0,1$,  and
\be \label{omega2ineq}
\norm{f-p }{[a,b]} \le c (b-a) \w_2(f', b-a; [a,b]), \quad \text{where $c=c(\gamma)$.}
\ee
\end{lemma}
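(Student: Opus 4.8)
The plan is to construct $p$ as a small, sign-correcting perturbation of a Whitney-type Hermite interpolant. Write $\Omega := \w_2(f', b-a; [a,b])$ and, after a shift of variable, assume $\lambda = 0$, so that $a<0<b$ with $b\ge\gamma(b-a)$ and $-a\ge\gamma(b-a)$. First I would record two consequences of the hypothesis $(x-\lambda)f(x)\ge 0$: by continuity $f(0)=0$, and since $f(x)/x\ge 0$ on both sides of $0$, the one-sided difference quotients force $f'(0)=:\phi\ge 0$. Every $p\in\Poly_2$ with $p(0)=0$ and $p'(0)=\phi$ has the form $p(x)=\phi x+Bx^2 = x(\phi+Bx)$, so the sign requirement $x\,p(x)\ge 0$ on $[a,b]$ is precisely $h(x):=\phi+Bx\ge 0$ on $[a,b]$; as $h$ is affine, this is equivalent to the two endpoint inequalities $h(a)\ge 0$ and $h(b)\ge 0$, that is, to $B$ lying in the interval $\mathcal{B}:=[\,-\phi/b,\ \phi/(-a)\,]$, which is nonempty and contains $0$ because $\phi\ge 0$.

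Next I would produce the ``free'' approximant. Applying \lem{umzhlem} with $\s=3$ and $r=1$ to the node set $V=\{0,0,\xi\}$, where $\xi$ is any point with $|\xi|\ge\gamma(b-a)$ (so that $\Lambda_1(V)\ge\lambda_0(b-a)$ with $\lambda_0$ depending only on $\gamma$), yields $p_0:=L_2(\cdot;f,V)\in\Poly_2$ with $p_0(0)=f(0)=0$ and $p_0'(0)=f'(0)=\phi$, hence $p_0(x)=\phi x+B_0x^2$ for some coefficient $B_0$, together with the estimate $\norm{f-p_0}{[a,b]}\le c(\gamma)(b-a)\Omega$. (Equivalently, one may approximate $f'$ by a polynomial in $\Poly_1$ via Whitney and integrate, pinning the value at $0$.)

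Then I would correct the sign by projection: set $B$ to be the nearest point of $\mathcal{B}$ to $B_0$ and $p(x):=\phi x+Bx^2$. By construction $p$ interpolates the prescribed data and satisfies the sign condition, so it remains only to bound $\norm{p-p_0}{[a,b]}=|B-B_0|\max(a^2,b^2)\le|B-B_0|(b-a)^2$, i.e.\ to control the projection distance $\dist(B_0,\mathcal{B})$. This is exactly where the endpoint values of $f$ are used \emph{quantitatively}. Writing $h_0(x)=\phi+B_0x$, so that $p_0(x)=x\,h_0(x)$, the bound $\norm{f-p_0}{[a,b]}\le c(b-a)\Omega$ combined with $f(b)\ge 0$ gives $b\,h_0(b)=p_0(b)\ge -c(b-a)\Omega$, whence $h_0(b)\ge -c'\Omega$ after dividing by $b\ge\gamma(b-a)$; symmetrically $f(a)\le 0$ yields $h_0(a)\ge -c'\Omega$. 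Rewriting these two inequalities in terms of $B_0$ shows that $B_0$ lies below the left endpoint of $\mathcal{B}$ by at most $c'\Omega/b$ and above the right endpoint by at most $c'\Omega/(-a)$, so $\dist(B_0,\mathcal{B})\le c''\Omega/(b-a)$. Consequently $\norm{p-p_0}{[a,b]}\le c''(b-a)\Omega$, and the triangle inequality $\norm{f-p}{[a,b]}\le\norm{f-p_0}{[a,b]}+\norm{p_0-p}{[a,b]}\le c(\gamma)(b-a)\Omega$ delivers \ineq{omega2ineq}.

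The main obstacle, and the reason the statement is delicate, is reconciling the forced Hermite condition $p'(0)=f'(0)$ with the sign constraint: once $p(0)=0$ and $p'(0)=\phi$ are imposed, only the single coefficient $B$ remains free, and the sign-feasible set $\mathcal{B}$ has length comparable to $\phi/(b-a)$, which may be far smaller than the a priori size of $B_0$. The resolution is that the hypothesis $(x-\lambda)f(x)\ge 0$ is exploited not merely as a qualitative sign but quantitatively at the two endpoints: together with the uniform bound for $f-p_0$ it pins $B_0$ to within $O(\Omega/(b-a))$ of $\mathcal{B}$, so the sign-correcting projection costs only $O\big((b-a)\Omega\big)$ in the uniform norm. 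Verifying the two endpoint estimates and checking that $\xi$ can be placed so the Whitney constant depends only on $\gamma$ are the only points requiring care; the remainder is routine.
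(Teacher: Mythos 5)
Your proposal is correct, and its core coincides with the paper's proof: both rest on \lem{umzhlem} with $\s=3$, $r=1$ (Hermite data $f(\lambda)$, $f'(\lambda)$ plus one endpoint value of $f$), and both exploit the hypothesis $(x-\lambda)f(x)\ge 0$ \emph{quantitatively} at the endpoints, combined with the Whitney-type bound, to show that the failure of copositivity is at most $O\bigl(\w_2(f',b-a;[a,b])\bigr)$. The difference is in how the correction step is organized. The paper builds \emph{two} interpolants $L_1$ (matching $f$ at $a$) and $L_2$ (matching $f$ at $b$): if either lies in $\Delta^{(0)}(\{\lambda\})$ it is taken as $p$; if both fail, the two failed sign inequalities force $|f(a)|$, $|f(b)|$ and $f'(\lambda)$ all to be $O(\w_2)$, and then the linear polynomial $f'(\lambda)(x-\lambda)$ is used. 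You instead fix a single interpolant $p_0=\phi x+B_0x^2$, observe that admissibility is exactly the one-parameter condition $B\in\mathcal{B}=[-\phi/b,\,\phi/(-a)]$, and project $B_0$ onto $\mathcal{B}$, bounding the projection distance by $c(\gamma)\Omega/(b-a)$ via the same two endpoint inequalities; your computation of that bound (including the mutual exclusivity of the two violations and the degenerate case $\phi=0$, where $\mathcal{B}=\{0\}$) is sound. Your packaging subsumes the paper's trichotomy --- its fallback choice $B=0$ is just a feasible, though not nearest, point of $\mathcal{B}$ --- and it also treats the normalization more carefully: the paper's ``assume $[a,b]=I$ and $\lambda=0$'' taken literally forces $\lambda$ to be the midpoint, whereas you keep $a<0<b$ with only the $\gamma$-conditions, which is precisely where $c(\gamma)$ enters. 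Finally, your placement of $\xi$ (e.g.\ $\xi=b$) does give $\Lambda_1(V)\ge\gamma(b-a)$, so the constant from \lem{umzhlem} depends only on $\gamma$, as required.
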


\begin{proof} Assume, for convenience, that $[a,b]=I$ and $\lambda = 0$.  Then, $f\in\Delta^{(0)}(\{0\})$, and we need to construct $p\in\Delta^{(0)}(\{0\})\cap\Poly_{2}$ such that $p^{(i)}(0)=f^{(i)}(0)$, $i=0,1$, and
$\norm{f-p}{} \le c \w_2(f',1)$.

Let $L_1$ and $L_2$ be the quadratic Lagrange-Hermite polynomials   such that $L_1^{(i)}(0) = L_2^{(i)}(0) = f^{(i)}(0)$, $i=0,1$,    $L_1(-1)=f(-1)$ and $L_2(1)=f(1)$. Note that $f(0)=0$ and $f'(0)\ge 0$. Hence,
\[
L_1(x) =  f'(0)\, x +   \left[  f(-1)    +f'(0)  \right]  \, x^2
\]
and
\[
L_2(x) =  f'(0)\, x +   \left[  f(1)    -f'(0)  \right]  \, x^2 .
\]
 It follows from \lem{umzhlem} with $\s=3$ and $r=1$ that
\[
\norm{f-L_j}{} \le c  \w_2(f', 1), \quad j=1,2 .
\]
Now,
note that $L_1 \in \Delta^{(0)}(\{0\})$ iff $L_1(1) \ge 0$, and  $L_2 \in \Delta^{(0)}(\{0\})$ iff $L_2(-1) \le 0$. 
So, if
$L_1(1) \ge 0$, we set $p := L_1$ and, if $L_2(-1) \le 0$, then we set $p := L_2$, and the proof is complete. Suppose now that
\[
L_1(1) = 2f'(0)+f(-1)  <  0 \andd L_2(-1)= f(1)- 2f'(0) > 0 .
\]
Then,
\[
0 \le f(1) \le f(1)-L_1(1) \le c \w_2(f', 1) \andd 0 \le - f(-1) \le L_2(-1) - f(-1) \le c \w_2(f', 1).
\]
We now   set
\[
p(x) :=  f'(0)\, x ,
\]
and note that $p\in\Delta^{(0)}(\{0\})$,  $p^{(i)}(0)=f^{(i)}(0)$, $i=0,1$, and
\[
\norm{f-p}{} \le \norm{f-L_1}{} + |f(-1)| + |f'(0)| \le
  c  \w_2(f', 1),
\]
since $0\le f'(0) <   f(1)/2  \le c \w_2(f', 1)$.
\end{proof}

 \begin{lemma}[copositive approximation with interpolation at  $Y_s$: $f\in C^2$] \leavevmode  \label{march23}
Suppose that $f\in C^2[a,b]$ is such that $(x-\lambda) f(x) \ge 0$, $x\in [a,b]$,
where  $\lambda \in [a,b]$  is such that $\min\{|a-\lambda|, |b-\lambda|\} \ge \gamma  |b-a|$, $\gamma >0$.  Then, there exists $P \in\Poly_{4}$ such that
$(x-\lambda) P(x) \ge 0$, $x\in [a,b]$,
  $P^{(i)}(\lambda)=f^{(i)}(\lambda)$, $i=0,1$,  and
\be \label{ommarch23}
\norm{f-P}{[a,b]} \le c (b-a)^2 \w_3(f'', b-a; [a,b]), \quad \text{where $c=c(\gamma)$.}
\ee
\end{lemma}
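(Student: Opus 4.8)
The plan is to reduce the problem to positive approximation of an auxiliary $C^1$ function and then invoke \lem{omega3}. As in \lem{omega2}, the hypothesis $(x-\lambda)f(x)\ge 0$ on $[a,b]$ forces $f(\lambda)=0$ and $f'(\lambda)\ge 0$ (since $f\le 0$ on $[a,\lambda]$ and $f\ge 0$ on $[\lambda,b]$). The key observation is that the factor $(x-\lambda)$ can be pulled out: setting
\[
g(x) := \frac{f(x)}{x-\lambda} = \int_0^1 f'\bigl(\lambda+t(x-\lambda)\bigr)\,dt , \qquad g(\lambda):=f'(\lambda),
\]
the requirements on $P$ become requirements on a \emph{cubic} nonnegative approximant of $g$. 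Indeed, if I can produce $Q\in\Poly_3$ with $Q\ge 0$ on $[a,b]$, $Q(\lambda)=g(\lambda)=f'(\lambda)$, and $\norm{g-Q}{[a,b]}\le c(\gamma)(b-a)\,\w_3(f'',b-a;[a,b])$, then $P(x):=(x-\lambda)Q(x)\in\Poly_4$ does the job: $P(\lambda)=0=f(\lambda)$, $P'(\lambda)=Q(\lambda)=f'(\lambda)$, $(x-\lambda)P(x)=(x-\lambda)^2Q(x)\ge 0$, and, since $f(x)=(x-\lambda)g(x)$ and $|x-\lambda|\le b-a$,
\[
\norm{f-P}{[a,b]}=\norm{(x-\lambda)(g-Q)}{[a,b]}\le (b-a)\,\norm{g-Q}{[a,b]}\le c(\gamma)(b-a)^2\,\w_3(f'',b-a;[a,b]),
\]
which is precisely \ineq{ommarch23}. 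Thus the factor $(b-a)^2$ emerges automatically from the factorization.

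Next I would verify the three properties of $g$ that \lem{omega3} requires. From the integral representation, $g\in C^1[a,b]$ with $g'(x)=\int_0^1 t\,f''\bigl(\lambda+t(x-\lambda)\bigr)\,dt$ (this is where $f\in C^2$ is used; the argument $\lambda+t(x-\lambda)$ stays in $[a,b]$). Nonnegativity $g\ge 0$ on $[a,b]$ is immediate from $(x-\lambda)f(x)\ge 0$, together with $g(\lambda)=f'(\lambda)\ge 0$. The one computation that needs care is the transfer of smoothness from $f''$ to $g'$: writing the third difference through the representation gives
\[
\Delta_h^3 g'(x)=\int_0^1 t\,\Delta_{th}^3 f''\bigl(\lambda+t(x-\lambda)\bigr)\,dt ,
\]
and, because $0\le t\le 1$ keeps every shifted argument $\lambda+t(x+jh-\lambda)$ inside $[a,b]$ whenever $x,x+3h\in[a,b]$, and $\w_3(f'',th;[a,b])\le\w_3(f'',h;[a,b])$, I obtain
\[
\w_3(g',h;[a,b])\le \tfrac12\,\w_3(f'',h;[a,b]) , \qquad 0<h\le (b-a)/3 .
\]

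Finally I would apply \lem{omega3} to $g$ at the interior point $\lambda$ (with the same $\gamma$), which produces $Q\in\Poly_3$ with $Q\ge 0$ on $[a,b]$, $Q(\lambda)=g(\lambda)=f'(\lambda)$, and $\norm{g-Q}{[a,b]}\le c(\gamma)(b-a)\,\w_3(g',b-a;[a,b])\le \tfrac12 c(\gamma)(b-a)\,\w_3(f'',b-a;[a,b])$; setting $P(x)=(x-\lambda)Q(x)$ and combining with the displayed estimate of the first paragraph completes the proof. I expect the only genuinely delicate point to be the modulus transfer $\w_3(g',h)\le\tfrac12\w_3(f'',h)$ together with the bookkeeping that all the arguments remain in $[a,b]$; everything else follows formally from the factorization $f=(x-\lambda)g$ and from \lem{omega3}. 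It is also worth noting that this argument parallels, one order of smoothness higher, the $C^1$ statement \lem{omega2}, where the same substitution reduces matters to the $C$-level positive approximation of \lem{lem33}.
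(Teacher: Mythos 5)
Your proof is correct, but it takes a genuinely different route from the paper's. The paper proves this lemma directly (after normalizing to $[a,b]=I$, $\lambda=0$): it takes a near-best quadratic approximation $p$ to $f''$, shifts it to obtain $p_1\le f''\le p_2$ with $p_2-p_1\le c\,\w_3(f'',1)$, integrates twice with the interpolation data at $0$ to get $P_1\le f\le P_2$, and then runs a three-case analysis on the signs of the constant terms, adding a cubic correction $\e x^3$ and verifying $P\in\Delta^{(0)}(\{0\})$ by elementary monotonicity arguments for the quotient $P(x)/x$. You instead divide out the factor $(x-\lambda)$: the identity $f(x)=(x-\lambda)g(x)$ with $g(x)=\int_0^1 f'\bigl(\lambda+t(x-\lambda)\bigr)\,dt$ reduces the $C^2$ copositive--interpolation problem to the $C^1$ positive--interpolation problem already solved in \lem{omega3}, and the two key computations you flag --- the modulus transfer $\w_3(g',h;[a,b])\le\tfrac12\,\w_3(f'',h;[a,b])$ via $\Delta_h^3 g'(x)=\int_0^1 t\,\Delta_{th}^3 f''\bigl(\lambda+t(x-\lambda)\bigr)\,dt$ (all arguments staying in $[a,b]$ by convexity), and the automatic appearance of the factor $(b-a)^2$ from $|x-\lambda|\le b-a$ --- are both sound; the interpolation conditions $P(\lambda)=f(\lambda)=0$, $P'(\lambda)=Q(\lambda)=f'(\lambda)$ and the sign condition $(x-\lambda)P(x)=(x-\lambda)^2Q(x)\ge0$ follow formally, and the degree count ($Q\in\Poly_3$, so $P\in\Poly_4$) matches the statement. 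What each approach buys: yours is shorter, explains structurally why copositive approximation at the $C^2$ level with $\w_3$ is governed by positive approximation at the $C^1$ level (and, as you note, the same substitution would derive \lem{omega2} from \lem{lem33}), but its correctness rests entirely on \lem{omega3}, whose proof is itself a nontrivial case analysis; the paper's argument is self-contained and independent of \lem{omega3}, at the cost of a longer and more ad hoc verification.
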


\begin{proof} Assume, for convenience, that $[a,b]=I$ and $\lambda = 0$.  Then, $f\in\Delta^{(0)}(\{0\})$, and we need to construct $P\in\Poly_4\cap \Delta^{(0)}(\{0\})$ such that $P^{(i)}(0)=f^{(i)}(0)$, $i=0,1$, and
$\norm{f-P}{} \le c \w_3(f'',1)$.

Note that $f\in\Delta^{(0)}(\{0\})$ implies that $f(0)=0$ and $f'(0) \ge 0$.

Let $p\in\Poly_2$ be any quadratic polynomial such that
\[
\norm{f''-p}{} \le c \w_3(f'', 1)
\]
 (\eg $p$ is the quadratic polynomial of best approximation of $f''$ on $I$).
We now shift $p$   so that it is below/above $f''$, \ie let
\[
p_1 := p - \norm{f''-p}{} \le f''  \andd p_2 := p + \norm{f''-p}{} \ge f'' ,
\]
and suppose that $p_i(x) = ax^2+bx+c_i $, $i=1,2$.
If $f'' \equiv p$, then there is nothing to prove, so assume that $f''\not\in\Poly_2$. Then,   $0 < c_2-c_1 = 2   \norm{f''-p}{}  \le  c \w_3(f'', 1)$. Now, for $i=1,2$, we define
\[
P_i(x) : = f'(0)x + \int_0^x (x-t) p_i(t) dt = f'(0) x +   \frac{a}{12} x^4 +  \frac{b}{6} x^3 +    \frac{c_i}{2} x^2
\]
and note that  $P_1 \le f \le P_2$ on $I$. Since $f\in\Delta^{(0)}(\{0\})$, this implies that $P_2 \ge 0$ on $[0,1]$, and $P_1 \le 0$ on $[-1,0]$:
\begin{align}  \label{condit}
Q_2(x):= P_2(x)/x & =  f'(0)   +  \frac{a}{12} x^3 +  \frac{b}{6} x^2 +    \frac{c_2}{2} x      \ge 0 , \quad  x\in [0,1],  \\ \nonumber
Q_1(x) := P_1(x)/x & = f'(0)   +  \frac{a}{12} x^3 +  \frac{b}{6} x^2 +    \frac{c_1}{2} x    \ge 0  , \quad  x\in [-1,0].
\end{align}
We now denote $\e:=(c_2-c_1)/2$ and define
\[
P(x) :=
\begin{cases}
P_1(x) +\e x^3 , & \text{if $c_2> c_1 \ge   0$,}\\
P_2(x) + \e x^3 , & \text{if $c_1<c_2 \le 0$,} \\
f'(0) x +  a x^4/12  + (b/6 +\e)  x^3   , & \text{if $c_1 <0< c_2$.}
\end{cases}
\]
It is obvious that  $P^{(i)}(0)=f^{(i)}(0)$, $i=0,1$, and $\norm{f-P}{} \le c \w_3(f'', 1)$ (in the third case, one uses the fact that $c_2-c_1 = |c_1| + |c_2| \le   c \w_3(f'', 1)$),
 and it only remains to check that  $P \in \Delta^{(0)}(\{0\})$.
We  denote $Q(x):= P(x)/x$ and note that  $P \in \Delta^{(0)}(\{0\})$ iff $Q \ge 0$ on $I$.

\begin{enumerate}[(i)]
\item  $c_2 > c_1 \ge   0$.

In this case, $P \le P_1 \le f \le 0$ on $[-1,0]$, and so we need to show that
\[ 
Q(x)   =  f'(0)   +  \frac{a}{12} x^3 +  \left( \frac{b}{6}+\e \right)  x^2 +    \frac{c_1}{2} x \ge 0 , \quad x\in [0,1].
\]
If $a \ge 0$, then
\begin{align*}
Q(x)  & =    Q_1(-x) + \frac{a}{6} x^3 +\e x^2  + c_1 x  \ge 0, \quad x\in [0,1],
\end{align*}
since, by \ineq{condit},  $Q_1(-x)\ge 0$, $x\in [0,1]$, and $a, \e, c_1 \ge 0$.

If $a<0$, then $Q'$ is a concave down parabola, and $Q'(0) =c_1/2 \ge 0$. Hence, either $Q'\ge 0$ ($Q\uparrow$) on $[0,1]$, or there exists $x_0\in [0,1]$ such that
$Q' \ge 0$ ($Q\uparrow$) on $[0,x_0]$, and $Q'\le 0$ ($Q\downarrow$) on $[x_0,1]$. Therefore, using \ineq{condit} again, we have
\begin{align*}
\min_{x\in [0,1]} Q (x) & = \min\{ Q (0), Q (1)\} = \min\left\{ f'(0), Q_2(1) + \e - (c_2-c_1)/2 \right\} \\
& =  \min\left\{ f'(0), Q_2(1) \right\} \ge 0.
\end{align*}

\item $c_1 < c_2 \le 0$.

 This is analogous to the previous case. Indeed,
 $P\ge P_2 \ge f \ge 0$ on $[0,1]$, and we need to verify that
\[ 
Q(x)   =  f'(0)   +  \frac{a}{12} x^3 +  \left( \frac{b}{6}+\e \right)  x^2 +    \frac{c_2}{2} x \ge 0 , \quad x\in [-1,0].
\]
 If $a \le 0$, then
\begin{align*}
Q(x)  & =    Q_2(-x) + \frac{a}{6} x^3 +\e x^2  + c_2 x  \ge 0, \quad x\in [-1,0],
\end{align*}
 since, by \ineq{condit},  $Q_2(-x)\ge 0$, $x\in [-1,0]$, and $a,  c_2 \le 0$ and $\e >0$.

If $a>0$, then $Q'$ is a concave up parabola, and $Q'(0)= c_2/2 \le 0$.
Hence, either $Q'\le 0$ ($Q\downarrow$) on $[-1,0]$, or there exists $x_0\in [-1,0]$ such that
$Q' \ge 0$ ($Q\uparrow$) on $[-1,x_0]$, and $Q'\le 0$ ($Q\downarrow$) on $[x_0,0]$. Therefore, using \ineq{condit} again, we have
\begin{align*}
\min_{x\in [-1,0]} Q (x) & = \min\{ Q (0), Q (-1)\} = \min\left\{ f'(0), Q_1(-1) + \e - (c_2-c_1)/2 \right\} \\
& =  \min\left\{ f'(0), Q_1(-1) \right\} \ge 0.
\end{align*}

\item $c_1 <0< c_2$. We need to check that
\[
Q(x) = f'(0)   +  \frac{a}{12} x^3 +  \left( \frac{b}{6}+\e \right)  x^2   \ge 0 , \quad x\in I.
\]
Denote
\[
\tQ(x) := f'(0)   -  \frac{|a|}{12} x^3 +  \left( \frac{b}{6}+\e \right)  x^2 ,
\]
and note that $Q\ge 0$ on $I$ iff $\tQ \ge 0$ on $[0,1]$.

Now, since $\tQ'$ is a concave down parabola (or a linear function) with $\tQ'(0)=0$, either $\tQ'$ does not change its sign (and so $\tQ$ is either $\uparrow$ or $\downarrow$) on $[0,1]$, or there exists $x_0\in [0,1]$, such that
$\tQ' \ge 0$ ($\tQ\uparrow$) on $[0, x_0]$ and $\tQ' \le 0$ ($\tQ\downarrow$) on $[x_0, 1]$. This implies that $\tQ$ achieves its minimum value on $[0,1]$ at $0$ or $1$, and
  since $\tQ (0) = f'(0) \ge 0$, it remains to verify that $\tQ(1) \ge 0$.

Using \ineq{condit}, if $a\ge 0$,   we have
\[
\tQ(1) = f'(0)   -  \frac{ a }{12}   +   \frac{b}{6}+\e  = Q_1(-1) +\e + \frac{c_1}{2} = Q_1(-1) + \frac{c_2}{2} \ge 0 ,
\]
and if $a<0$, then
\[
\tQ(1) = f'(0)   +  \frac{ a }{12}   +   \frac{b}{6}+\e  = Q_2(1)+ \e - \frac{c_2}{2} = Q_2(1) - \frac{c_1}{2} \ge 0.
\]
\end{enumerate}
The proof is now complete.
\end{proof}

\subsection{Global construction}

Let $\zn := \{z_j\}_{j=0}^n$, where $-1:=z_n<z_{n-1} <\dots <z_{1}<z_0:=1$,
$n\in\N$, be a given knot sequence on $I$, and set $z_i:=-1$, $i>n$, $z_i:=1$, $i<0$, and  $J_i:=[z_i,z_{i-1}]$.

Denote by $\Sigma_{k}(\zn)$ the collection of all
continuous piecewise polynomials of degree $\le k-1$ (of order $k$) on the  knot sequence $\zn$.
Functions from $\Sigma_{k}(\zn)\cap C^{k-2}$     
are usually called splines (or splines with minimal defect) of order $k$ with knots $\zn$.

We need the following result which is usually referred to as Beatson's lemma.

\begin{lemma}[Beatson \cite{bea}*{Lemma 3.2}] \label{lem61}
Let $m \ge  2$ be an integer and $d = 2(m - 1)^2$. Let
$T = \{t_i\}_{i=-\infty}^\infty$  be a strictly increasing knot sequence with $t_0=a$  and $t_d = b$. If  $p_1, p_2\in\Poly_{m-1}$, then
  there exists a spline $S\in\Sigma_m(T)\cap C^{m-2}$ 
  such that
\begin{enumerate}[(i)]
\item  $S(x)$ is a number between $p_1(x)$ and $p_2(x)$  for each $x\in [a,b]$,
\item $S \equiv p_1$ on $(-\infty, a]$ and $S \equiv p_2$ on $[b, \infty)$.
\end{enumerate}
\end{lemma}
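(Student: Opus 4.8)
The plan is to first remove the ``target from below'' by translation, reducing to a transition problem. Set $h := p_2 - p_1 \in \Poly_{m-1}$ and look for $u := S - p_1$; since $p_1 \in \Poly_{m-1}$, we have $u \in \Sigma_m(T)\cap C^{m-2}$ iff $S$ does, and the three requirements become: $u \equiv 0$ on $(-\infty,a]$, $u\equiv h$ on $[b,\infty)$, and $u(x)$ lies between $0$ and $h(x)$ for every $x\in[a,b]$. The useful observation here is that \emph{both} of the ``boundary values'' $0$ and $h$ are themselves admissible everywhere (each lies between $0$ and $h$ pointwise), so the only genuinely binding constraints are (a) the $C^{m-2}$ smoothness of the switch from $0$ to $h$, and (b) the zeros of $h$ in $(a,b)$, at which the betweenness condition pinches $u$ to $0$. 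Since $h\in\Poly_{m-1}$ it has at most $m-1$ sign changes, so there are at most $m-1$ such pinch points to negotiate.

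The mechanism I would use is the B-spline control-point representation for the order-$m$ knot sequence $T$. Writing $u=\sum_i c_i N_i$ in the normalized B-spline basis has the decisive advantage that \emph{any} coefficient choice already yields a $C^{m-2}$ spline of $\Sigma_m(T)$, so smoothness is automatic and only the coefficients must be chosen. Using Marsden's identity (blossoming) I would expand $h=\sum_i \lambda_i N_i$, where $\lambda_i$ is the blossom of $h$ at the interior knots of the support of $N_i$. The convex-hull/variation-diminishing property of B-splines then gives the key betweenness criterion: if for every $i$ the coefficient $c_i$ lies between $0$ and $\lambda_i$, and if the ``switching band'' of indices $i$ with $0<c_i/\lambda_i<1$ has its supports confined to a single sign-constant subinterval of $h$, then $\sum_i c_i N_i$ lies between $0$ and $h$ pointwise. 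I would therefore set $c_i=0$ for all small $i$ (forcing $u\equiv 0$ near $a$), $c_i=\lambda_i$ for all large $i$ (forcing $u\equiv h$ near $b$), and let $(c_i)$ switch monotonically from one regime to the other, keeping the whole switch inside sign-pure runs of knots.

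The counting is what consumes the hypothesis $d=2(m-1)^2$. I would allocate a buffer of $2(m-1)$ consecutive knots to neutralize each of the at most $m-1$ sign changes of $h$: the $2(m-1)$ figure is what is needed to complete a confined switch whose active B-splines match all $m-1$ derivative conditions on the ``$0$'' side and on the ``$h$'' side without the switching band ever straddling a zero of $h$. Since $d=2(m-1)\cdot(m-1)$, the total budget covers the worst case in which all $m-1$ sign changes occur. The step I expect to be the main obstacle is precisely this robustness to arbitrary knot placement: the knots of $T$ are prescribed and may fall anywhere relative to the (a priori unknown) zeros of $h$, so one cannot simply place a knot at each zero. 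The delicate work is to show that, no matter how the zeros of $h$ interleave with the fixed knots, one can always locate sign-pure runs long enough to host each confined transition while simultaneously maintaining both the two-sided betweenness across the pinch points and the $C^{m-2}$ matching --- and it is exactly this simultaneous enforcement, sign change by sign change, that uses the full $2(m-1)$-knot buffer and pins down $d=2(m-1)^2$.
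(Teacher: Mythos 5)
The paper offers no proof of this lemma to compare against: it is imported verbatim from Beatson's 1982 paper (\cite{bea}, Lemma 3.2), so your proposal has to stand on its own. It does not, because the B-spline ``betweenness criterion'' on which the whole plan rests is false. Coefficient-wise betweenness does not imply pointwise betweenness, since the B-spline (blossom) coefficients of $h$ need not have the sign of $h$ on a sign-pure interval. Concretely, take $m=3$, knots at the odd integers, and $h(x)=x^2+\e$ with $0<\e<1$: $h$ is strictly positive, so the whole line is a single sign-constant run and your confinement hypothesis is vacuous. Marsden's identity gives $\lambda_i=t_{i+1}t_{i+2}+\e$, so the coefficient $\lambda_{i_0}$ associated with the consecutive knot pair $(-1,1)$ equals $-1+\e<0$. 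Choosing $c_i=\lambda_i$ for all $i\ne i_0$ and $c_{i_0}=0$ --- a choice your criterion permits, since $0$ lies between $0$ and $\lambda_{i_0}$ and the strict switching band $\{i : 0<c_i/\lambda_i<1\}$ is empty --- yields $u=h-\lambda_{i_0}N_{i_0}=h+(1-\e)N_{i_0}>h$ on the interior of the support of $N_{i_0}$, violating $u\le\max(0,h)$. This is not a perverse configuration: it is exactly the boundary configuration ($c_i=0$ on one side of the switch, $c_i=\lambda_i$ on the other) that your monotone switch must pass through.

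The deeper problem is that your reduction of the obstruction set to ``the at most $m-1$ sign changes of $h$'' is too coarse. What obstructs the coefficient switch is not where $h$ changes sign but where the blossom values along the knot staircase change sign, and these are governed also by double zeros and by complex zeros of $h$ close to $[a,b]$ (in the example above $h$ has no real zeros at all). A repaired argument would have to (a) bound the number of sign changes of the coefficient sequence $(\lambda_i)$ --- a statement you never formulate --- and (b) show that among the $d$ prescribed knots, however they interleave with the zeros (real \emph{and} complex) of $h$, there is always a long enough run on which the $\lambda_i$ are sign-consistent with $h$, on top of handling the genuine pinch points where $u$ must vanish. You yourself flag this robustness step as ``the main obstacle,'' and that is indeed where the entire proof lies; as written, the proposal is a plan whose central lemma is false and whose budget $d=2(m-1)^2$ is therefore not actually accounted for.
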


The following theorem is our  main result on intertwining   approximation with interpolatory constraints by splines.

 \begin{theorem}[intertwining spline approximation with interpolatory constraints]\leavevmode\label{splnthm1}%
Let $k,p\in\N$, $s\in\N_0$, $Y_s\in\Y_s$, $A_p\in\A_p$,  $(r,m_1,m_2, m_3)\in\intset_{\text{intertwining}}$, defined in \ineq{defintertwining}, and suppose that
 $f\in C^r$.
 Also, let $\zn$ be a
given knot sequence such that there are at least $4(k+r-1)^2$
knots in each nonempty open interval $(\beta_j,  \beta_{j-1})$, $1\le j \le s+p+1$, where $\{\beta_j\}_{j=1}^{s+p} =  Y_s\cup A_p$, $\beta_0 = 1$ and $\beta_{s+p+1}=-1$  (see \ineq{unionya}).
Then, there exists
\be \label{mar12}
S  \in   \Sigma_{k+r}(\zn)\cap C^{k+r-2} \cap \td\cap \I^{(m_1)}(f, A_p)\cap \I^{(m_2)}(f, Y_s)  \cap  \I^{(m_3)}(f, \{\pm 1\})
\ee
such that, for $0\le i \le n-1$,
 \be \label{eq:sp1}
\norm{f-S}{J_i} \le c|J_i|^r \w_{k}(f^{(r)}, |\jj_i|; \jj_i) , \quad c=c(k,r, \rho) ,
\ee
 where  $\rho:= \rho(\zn) := \max \left\{ |J_{i\pm1}|/|J_i| \st 1\le i \le n \right\}$, and $\jj_i$ is an
interval such that
$J_i\subset \jj_i\subseteq \bigl[z_{i+6(k+r-1)^2}, \,z_{i-6(k+r-1)^2} \bigr]$. Here, if $m_1=-1$, then
 the restriction $S\in \I^{(-1)}(f, A_p)$ is vacuous, \ie
 there is no requirement that $S$ interpolate $f$ at the points in $A_p$.
\end{theorem}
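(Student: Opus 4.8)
The plan is to construct $S$ by the classical ``local pieces plus Beatson gluing'' scheme, arranging matters so that $S$ coincides with a fixed polynomial anchor in a full neighbourhood of each special point (which forces the Hermite conditions exactly) while everywhere else $S$ stays sandwiched between two correct-signed local approximants (which forces both the $\td$ sign pattern and the local Jackson bound). Throughout, $\{\beta_j\}=Y_s\cup A_p$, the sign-regions are the intervals $[y_{\ell+1},y_\ell]$ on which $\td$ demands $(-1)^\ell(S-f)\ge 0$, and the available $4(k+r-1)^2$ knots per gap supply the buffers that Beatson's lemma needs.

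First I would set up the local pieces. On each knot interval $J_i$ lying well inside a single sign-region $[y_{\ell+1},y_\ell]$ I would take the one-sided polynomial $Q_i\in\Poly_{k+r-1}$ produced by \lem{lem32gen}(ii)--(iii), applied to $\pm f$ over the window $\jj_i$ with $\lambda$ an endpoint of $\jj_i$, so that $(-1)^\ell(Q_i-f)\ge 0$ on $\jj_i$ and $\norm{f-Q_i}{\jj_i}\le c|\jj_i|^r\w_k(f^{(r)},|\jj_i|;\jj_i)$. Near each special point $\beta_j$ I would instead fix a single anchor $p_j\in\Poly_{k+r-1}$ that interpolates $f$ at $\beta_j$ to the prescribed order ($m_1$ if $\beta_j\in A_p$, $m_2$ if $\beta_j\in Y_s$, $m_3$ if $\beta_j=\pm1$) and carries the correct sign of $p_j-f$ on each side of $\beta_j$. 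For all the orders that equal $r-1$ (the endpoints, the $A_p$-points when $r$ is even, and the $Y_s$-points when $r$ is odd) this anchor is again furnished directly by \lem{lem32gen} applied to $\pm f$: case (i) already yields a genuine sign change at an interior point when $r$ is odd, matching the $Y_s$ requirement, and constant sign when $r$ is even, matching the $A_p$ requirement.

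Next I would glue. Ordering all pieces along $I$, I would place between two adjacent pieces a buffer of $2(k+r-1)^2$ consecutive knots, always chosen to lie inside one sign-region, and apply Beatson's \lem{lem61} with $m=k+r$ to obtain a spline $S\in\Sigma_{k+r}(\zn)\cap C^{k+r-2}$ whose graph lies between the two pieces on the buffer and equals each piece outside it. Since the two pieces flanking any buffer carry the same required sign of $(\cdot)-f$ there, the sandwiching keeps $S-f$ correctly signed; since $S\equiv p_j$ in a neighbourhood of each $\beta_j$, the Hermite conditions hold exactly, giving $S\in\td\cap\I^{(m_1)}(f,A_p)\cap\I^{(m_2)}(f,Y_s)\cap\I^{(m_3)}(f,\{\pm1\})$; and since on $J_i$ the value of $S$ lies between two pieces each obeying the local Jackson bound over $\jj_i$, the estimate \eqref{eq:sp1} follows. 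The enlargement to $\jj_i\subseteq[z_{i+6(k+r-1)^2},z_{i-6(k+r-1)^2}]$ is exactly the span of knots touched by the at most three consecutive transitions that can influence $J_i$, and the mesh ratio $\rho$ enters $c$ only through comparing $|J_i|$ with $|\jj_i|$.

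The main obstacle is the anchor construction in the two ``order $r-2$'' cases: a $Y_s$-point with $r$ \emph{even} (where $m_2=r-2$ and $S-f$ must change sign) and an interior $A_p$-point with $r$ \emph{odd} (where $m_1=r-2$ and $S-f$ must \emph{not} change sign). Here \lem{lem32gen} has the wrong parity: interpolating to order $r-1$ makes the remainder behave like $(x-\beta_j)^r$, which refuses to change sign when we need one (even $r$, $Y_s$) and insists on changing sign when we must forbid it (odd $r$, $A_p$). I would remedy this by interpolating only to order $r-2$ and adding a correction term $a\,(x-\beta_j)^{r-1}$: the parity of its exponent $r-1$ is opposite to that of $r$, so one sign choice of $a$ either creates the required sign change ($r$ even) or suppresses the unwanted one ($r$ odd), and being of lower order this term dominates the $O((x-\beta_j)^r)$ remainder near $\beta_j$. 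The delicate point is to pin $|a|$ comparably to $|\jj_i|\,\w_k(f^{(r)},|\jj_i|)$ --- large enough to control the remainder across the whole adjacent region, small enough to preserve the Jackson bound --- and then to verify that the resulting single anchor has the correct global sign on both neighbouring sign-regions with no spurious extra sign changes; this balancing is where the bulk of the technical work lies.
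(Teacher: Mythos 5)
Your global scheme is the same as the paper's: one-sided Whitney-type pieces away from the special points, a fixed anchor polynomial near each $\beta_j$ (so that $S$ coincides with it on a neighbourhood and the Hermite conditions hold exactly), and Beatson blending (\lem{lem61} with $m=k+r$, $d=2(k+r-1)^2$) on buffers lying inside a single sign-region, with the sandwich property propagating both the $\td$ signs and the local Jackson bound. Your diagnosis of where the difficulty sits is also exactly right: the only anchors not furnished directly by \lem{lem32gen} are the two ``order $r-2$'' cases --- a $Y_s$-point with $r$ even and an interior $A_p$-point with $r$ odd. But this is precisely where your argument stops being a proof: the correction-term device $q+a(x-\beta_j)^{r-1}$ is only sketched, and you yourself defer the balancing of $|a|$, the absence of spurious sign changes, and the preservation of the Jackson bound to ``the bulk of the technical work''. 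As written, the crux of the theorem is missing.

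The gap closes with one observation you did not make: apply \lem{lem32gen} \emph{itself} with the parameters $(k+1,r-1)$ in place of $(k,r)$, i.e., run the same construction one derivative lower with a modulus one order higher (this needs $r\ge 2$, which holds in both problematic cases). The resulting $p\in\Poly_{k+r-1}$ interpolates $f$ to order $(r-1)-1=r-2$, exactly the required $m_1$ (odd $r$, $A_p$) resp.\ $m_2$ (even $r$, $Y_s$), and its sign pattern is governed by the parity of $r-1$: constant sign when $r$ is odd (as needed across an $A_p$-point), a genuine sign change when $r$ is even (as needed at a $Y_s$-point). The estimate survives because $\w_{k+1}(f^{(r-1)},t;J)\le t\,\w_k(f^{(r)},t;J)$, so $\norm{f-p}{\itilde}\le c\,|\itilde|^{r-1}\w_{k+1}(f^{(r-1)},|\itilde|;\itilde)\le c\,|\itilde|^{r}\w_k(f^{(r)},|\itilde|;\itilde)$; this is exactly the paper's Cases 2(iii)--(iv) and 3(iii)--(iv). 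It is worth noting that executing your plan rigorously leads back to the same place: to dominate the remainder of an order-$(r-2)$ interpolant pointwise by $|a|\,|x-\beta_j|^{r-1}$ you need control of $(f-q)^{(r-1)}$, and the natural interpolant with that control --- the Taylor-type integral of a one-sided polynomial approximant to $f^{(r-1)}$ --- already has a one-signed (resp.\ sign-changing) remainder, making the correction term superfluous; that construction is literally the proof of \lem{lem32gen} at level $r-1$.
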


We recall that, if $s=0$, then $\Y_0 = \emptyset$, and so \thm{splnthm1} becomes a theorem on onesided spline approximation with interpolatory constraints.

\begin{proof}
The idea of the proof    is quite similar to that of \cite{hky}*{Theorem 3}.
 The only difference is that, near the points from $Y_s\cup A_p\cup \{\pm 1\}$, we   use  \lem{lem32gen}    to construct an appropriate polynomial piece of $S$.
 For completeness, we provide details.

Without loss of generality,  we may assume that $\pm 1 \in A_p$.
We now denote $m:= k+r$,    $d:=2(m-1)^2$, $N:=\lceil n/d \rceil$ and $\zbar_i:=z_{di}$, $i\in\Z$.  Note that $\zbar_i=1$ for $i\le0$ and $\zbar_i=-1$ for
$i\ge N$.

We   now   construct overlapping polynomial pieces of degree
$\le m-1$ on the coarser partition $\ozn :=\{\zbar_i\}_{i=0}^N$.
We say that the interval $\ibari:= [\zbar_i,\, \zbar_{i-1}]$ is ``$Y_s$-contaminated''
if $\zbar_i \le y_j < \zbar_{i-1}$ for some point $y_j\in Y_s$, and it is ``$A_p$-contaminated'' if
 $\zbar_i \le \alpha_j < \zbar_{i-1}$, $i\ge 2$, or $\zbar_1 \le \alpha_j \le  \zbar_{0}=1$,  for some point $\alpha_j\in A_p$.
By assumption in the statement of the theorem,
there exists exactly one $\beta_j$ in each of the
contaminated intervals $\ibar_{\mu_j}$, $1\le j \le s+p$, and there is
at least one non-contaminated interval between
 $\ibar_{\mu_j}$ and $\ibar_{\mu_{j+1}}$, \ie
 \be \label{eq:sp2}
  \mu_j+2 \leq \mu_{j+1}, \qquad 1\le j \le s+p-1.
 \ee
 Note that $\mu_1=1$ and $\mu_{s+p}=N$ by our assumption that $\pm 1\in A_p$. 

Also, if an interval $J\subset I$ does not contain any points from $Y_s$, then $J \subset [y_j, y_{j-1}]$, for some $1\le j \le s+1$, and we say that
$J$ is  ``positive'' if $j$ is odd, and it is ``negative'' if $j$ is even. Any function  $S\in\td$ satisfies $S\ge f$ on all positive intervals, and $S\le f$ on all negative intervals.

If $\mu_{j+1}> \mu_j+2$, on each of the   intervals $[\zbar_i,\, \zbar_{i-2}]$,
$i=\mu_j+2, \dots, \mu_{j+1}-1$, 
 by Whitney's inequality there exist two polynomials
$P_i$ and $Q_i$ of degree $<m$ such that
 \[
 P_i(x) \ge f(x) \ge Q_i(x), \quad   x\in[\zbar_i,\, \zbar_{i-2}],
\]
and
\[
 \norm{P_i-Q_i}{[\zbar_i,\, \zbar_{i-2}]} \le c  \w_m(f , |\ibari|; [\zbar_i,\, \zbar_{i-2}]) \le c |\ibari|^r \w_k(f^{(r)} , |\ibari| ;  [\zbar_i,\, \zbar_{i-2}]), \quad c=c(m,\rho).
\]
 We now define $p_i$ on  $[\zbar_i,\, \zbar_{i-2}]$ as
 \[
 p_i :=
 \begin{cases}
 P_i , & \text{if $(\zbar_i,\, \zbar_{i-2})$ is positive,} \\
 Q_i , & \text{if $(\zbar_i,\, \zbar_{i-2})$ is negative.}
 \end{cases}
 \]
 Then,
 \be \label{min}
  \norm{f-p_i}{[\zbar_i,\, \zbar_{i-2}]}     \le c |\ibari|^r \w_k(f^{(r)} , |\ibari| ;  [\zbar_i,\, \zbar_{i-2}]), \quad c=c(k,r,\rho).
 \ee
If $\mu_{j+1}= \mu_j+2$ (\ie  if there is only one non-contaminated interval between
 $\ibar_{\mu_j}$ and $\ibar_{\mu_{j+1}}$), then the above construction is not needed.

We now construct local polynomial pieces near each contaminated interval $\ibar_{\mu_j}$, $1\le j \le s+p$.
Denote  $\itilde_{\mu_j} := [\zbar_{\mu_j+1}, \zbar_{\mu_j-2}]$ and use  \lem{lem32gen} with $[a,b]=\itilde_{\mu_j}$  to define $p_{\mu_j}$ on $\itilde_{\mu_j}$. We recall that $p(\cdot; f,k,r,\lambda)$ denotes the polynomialfrom the statement of \lem{lem32gen} and consider three cases.

\medskip

\noindent
{\bf Case 1:} $\ibar_{\mu_j}$ is $\{\pm 1\}$-contaminated (\ie $j=1$ or $j=s+p$).
\begin{enumerate}[(i)]
\item  If $j=1$ and $r$ is even, then $p_{\mu_{1}} = p_1 := p(\cdot ; f, k, r, 1)$.
\item  If $j=1$ and $r$ is odd, then $p_{\mu_{1}} = p_1 := -p(\cdot ; -f, k, r, 1)$.
\item If $j=s+p$ and $\ibar_{\mu_{s+p}}=\ibar_{N}$ is positive, then $p_{\mu_{s+p}} = p_N := p(\cdot ; f, k, r, -1)$.
\item If $j=s+p$ and $\ibar_{\mu_{s+p}}=\ibar_{N}$ is negative, then $p_{\mu_{s+p}} = p_N := -p(\cdot ; -f, k, r, -1)$.
 \end{enumerate}

\noindent
{\bf Case 2:} $1<j<s+p$ and  $\ibar_{\mu_j}$ is $A_p$-contaminated. (Recall that, in this case,  $r=1$ is excluded, and so  $r\ge 2$.)
\begin{enumerate}[(i)]
\item If  $r$ is even and  $\ibar_{\mu_j}$ is positive, then $p_{\mu_j} = p(\cdot ; f, k, r, \beta_j)$.
\item If   $r$ is even and  $\ibar_{\mu_j}$ is negative, then $p_{\mu_j} = -p(\cdot ; -f, k, r, \beta_j)$.
\item If   $r$ is odd and  $\ibar_{\mu_j}$ is positive, then $p_{\mu_j} = p(\cdot ; f, k+1, r-1, \beta_j)$.
\item If   $r$ is odd and  $\ibar_{\mu_j}$ is negative, then $p_{\mu_j} = -p(\cdot ; -f, k+1, r-1, \beta_j)$.
 \end{enumerate}

\noindent
{\bf Case 3:}  $\ibar_{\mu_j}$ is $Y_s$-contaminated (and so $\beta_j=y_i$, for some $1\le i\le s$).
\begin{enumerate}[(i)]
\item If  $r$ is odd and $i$ is odd, then $p_{\mu_j} = p(\cdot ; f, k, r, \beta_j)$.
\item If  $r$ is odd and  $i$ is even, then $p_{\mu_j} = -p(\cdot ; -f, k, r, \beta_j)$.
\item If  $r$ is even and  $i$ is odd, then $p_{\mu_j} = p(\cdot ; f, k+1, r-1, \beta_j)$.
\item If  $r$ is even and  $i$ is even, then $p_{\mu_j} = -p(\cdot ; -f, k+1, r-1, \beta_j)$.
 \end{enumerate}

We remark that, if $r\ge 2$ and  $p_{\mu_j} = p(\cdot ; f, k+1, r-1, \lambda)$, then  $p_{\mu_j}^{(i)}(\lambda) = f^{(i)}(\lambda)$, $0\le i \le r-2$, and
\[
\norm{f-p_{\mu_j}}{\itilde_{\mu_j}} \le c |\itilde_{\mu_j}|^{r-1} \w_{k+1}(f^{(r-1)}, |\itilde_{\mu_j}|; \itilde_{\mu_j}) \le
c |\itilde_{\mu_j}|^{r} \w_{k}(f^{(r)}, |\itilde_{\mu_j}| ;  \itilde_{\mu_j}).
\]
All overlapping polynomial pieces with all the right properties have now been constructed, and it remains to blend them together to obtain a smooth spline $S$ on the original knot sequence $\zn$ with the same properties.

 If both   $\ibari$ and $\ibar_{i+1}$ are
non-contaminated,
then $p_i$ and $p_{i+1}$  overlap on $\ibari$, which contains $d-1$ interior knots from $\zn$.
By \lem{lem61}
there exists a spline
$S_i$ of order $m$ on $\ibari$ on these knots that connects with
$p_{i+1}$ and $p_i$
in a $C^{m-2}$ manner at $\zbar_i=z_{di}$ and $\zbar_{i-1}=z_{d(i-1)}$,
respectively. Moreover, the graph of $S_i$ lies between those of $p_{i+1}$
and $p_i$, and, hence,
$\sgn(p_{i+1}(x)-f(x)) = \sgn(p_{i}(x)-f(x)) = \sgn(S_i(x)-f(x))$, $x\in \ibari$. Additionally, it follows from \ineq{min} that
 \[
  \norm{f-S_i}{[\zbar_i,\, \zbar_{i-2}]}     \le c |\ibari|^r \w_k(f^{(r)} , |\ibari| ;  [\zbar_{i+1},\, \zbar_{i-2}]).
 \]
 The blending of overlapping polynomial pieces involving contaminated
intervals is done in exactly the same way.
The only difference is that
the spline pieces  $S_i$ thus
produced   satisfy the estimate above with a slightly larger interval
in place of $[\zbar_{i+1},\, \zbar_{i-2}]$ on the right-hand side,
($[\zbar_{i+2},\, \zbar_{i-3}]$ at worst), which will make no
difference in the rest of the proof.

We define the final spline $S$ on each $\ibari$ as follows: if
there is only one local polynomial $p_i$ over $\ibari$, set $S$ to this
polynomial; if there are two polynomials overlapping on $\ibari$,
then there must
be a blending local spline $S_i$, and so set $S$ to $S_i$. It is clear from this
construction that
$S\in   \Sigma_{k+r}(\zn)\cap C^{k+r-2} \cap \td$, and $S$ also satisfies interpolatory conditions stated in \ineq{mar12}.

Finally,  all neighboring intervals $I_i:=
[z_i,\, z_{i-1}]$ in the original partition $\zn$ are comparable in size
and each interval $\ibari=[z_{di},\, z_{d(i-1)}]$
contains no more than $d$ such intervals. Therefore, (\ref{eq:sp1}) is satisfied.
\end{proof}

 \begin{theorem}[copositive  spline approximation with interpolatory constraints]\leavevmode\label{splcopositive}%
 Let $p\in\N$, $s\in\N_0$,    $Y_s \in\Y_s$, $A_p\in\A_p$, and $(r,k,m_1,m_2, m_3)\in\copset_{\text{copositive}}$, defined in \ineq{defcopositive},
and suppose that
$f\in C^r\cap  \Delta^{(0)}(Y_s)$.
 Also, let $\zn$ be a
given knot sequence such that there are at least $4(k+r-1)^2$
knots in each nonempty open interval $(\beta_j,  \beta_{j-1})$, $1\le j \le s+p+1$, where $\{\beta_j\}_{j=1}^{s+p} =  Y_s\cup A_p$, $\beta_0 = 1$ and $\beta_{s+p+1}=-1$  (see \ineq{unionya}).
Then, there exists
\be \label{april21}
S  \in   \Sigma_{k+r}(\zn)\cap C^{k+r-2} \cap   \Delta^{(0)}(Y_s)     \cap \I^{(m_1)}(f, A_p)\cap \I^{(m_2)}(f, Y_s)  \cap  \I^{(m_3)}(f, \{\pm 1\})
\ee
such that, for $0\le i \le n-1$,
 \be \label{april211}
\norm{f-S}{J_i} \le c|J_i|^r \w_{k}(f^{(r)}, |\jj_i| ; \jj_i) , \quad c=c(k,r, \rho) ,
\ee
 where
 $\rho$ and  $\jj_i$ are the same as in the statement of \thm{splnthm1}.
 Here, if $m_1=-1$, then
 the restriction $S\in \I^{(-1)}(f, A_p)$ is vacuous, \ie
 there is no requirement that $S$ interpolate $f$ at the points in $A_p$.
\end{theorem}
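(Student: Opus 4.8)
The plan is to reuse the coarsening-and-blending machinery from the proof of \thm{splnthm1} essentially verbatim, the only genuinely new ingredient being the choice of local polynomial pieces near the contaminated intervals. First I would dispose of every index $(r,k,m_1,m_2,m_3)\in\copset_{\text{copositive}}$ for which $(r,m_1,m_2,m_3)\in\intset_{\text{intertwining}}$; comparing \ineq{defcopositive} with \ineq{defintertwining}, this covers all $r\ge 3$ together with the two cases $(1,k,-1,0,0)$ and $(2,k,1,0,1)$. For these I would simply apply \thm{splnthm1} to produce $S\in\Sigma_{k+r}(\zn)\cap C^{k+r-2}\cap\td\cap\I^{(m_1)}(f,A_p)\cap\I^{(m_2)}(f,Y_s)\cap\I^{(m_3)}(f,\{\pm1\})$ satisfying \ineq{april211}, and then observe that, since $f\in\ds$, $S-f\in\ds$, and $\ds$ is a convex cone, we have $S=(S-f)+f\in\ds$. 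Thus $S$ is copositive and every required property holds with no extra work.

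It then remains to treat the four exceptional low-smoothness indices $(0,2,0,0,0)$, $(1,3,0,0,0)$, $(1,2,0,1,0)$ and $(2,3,1,1,1)$, which demand more interpolation at $A_p$ or at $Y_s$ than intertwining permits. For these I would run the identical construction: coarsen $\zn$ to $\ozn$, build overlapping polynomial pieces $p_i$ of degree $\le k+r-1$ on $\ozn$, and glue consecutive pieces with Beatson's \lem{lem61}, which yields a $C^{k+r-2}$ spline lying pointwise between the two pieces it joins. On non-contaminated intervals the pieces come from Whitney's inequality; since every overlap region lies strictly inside a single $[y_j,y_{j-1}]$ on which $f$ has constant sign, I would shift each such piece to carry that sign (on a positive interval replace a Whitney polynomial $P$ by $P+c\,\w_k(\dots)$, which is nonnegative there and still within the required error), so that the ``between'' property of \lem{lem61} makes the blended spline inherit the correct sign. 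The new choices appear only near contaminated intervals: near an interior point of $A_p$ I would use the \emph{positive} local approximants --- \lem{lem33} when $r=0$, \lem{omega3} when $r=1$, and \lem{lem32gen}(i) (which gives $p\ge f\ge 0$ on a positive interval, interpolating $f$ and $f'$) when $r=2$ --- composing with $g\mapsto -p(\cdot;-g)$ on negative intervals; near $y_i\in Y_s$ with $m_2=1$ (the cases $(1,2,0,1,0)$ and $(2,3,1,1,1)$) I would use the \emph{copositive} local approximants \lem{omega2} and \lem{march23}, which interpolate both $f$ and $f'$ at $y_i$ while preserving the sign change; near $y_i$ with $m_2=0$ (the cases $r=0,1$) and near $\pm1$ I would use \lem{lem32gen} exactly as in \thm{splnthm1} (for $r=0$ a direct linear interpolant through $(y_i,0)$ with sign-controlled slope), again using the reflection $-p(\cdot;-f)$ at the right endpoint and on negatively-signed intervals. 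The $\gamma$-separation hypotheses of \lem{omega3}, \lem{omega2}, \lem{march23} are guaranteed, with $\gamma=\gamma(\rho)$, by the assumed knot spacing. Finally, the modulus delivered by each lemma matches the target up to the elementary bound $\w_{k+1}(\cdot,t)\le 2\w_k(\cdot,t)$; in particular \lem{omega3}, which gives $\w_3(f',\cdot)$, supplies the $\w_2(f',\cdot)$ bound needed in $(1,2,0,1,0)$ since $\w_3(f',\cdot)\le 2\w_2(f',\cdot)$.

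The error bound \ineq{april211} then follows exactly as in \thm{splnthm1}, because the local estimates have the same form and the coarsening ratio and overlap counts are unchanged. The step I expect to require the most care is the sign bookkeeping for the four exceptional indices: one must verify, interval by interval, that the orientation of each local piece (positive versus negative subinterval, left versus right endpoint, parity of the index $i$ of $y_i$) is simultaneously compatible with copositivity of $S$ and with the prescribed interpolation order, and that \lem{lem61} never destroys these signs when two pieces of opposite target sign adjoin --- which it cannot, since every blending overlap is confined to a constant-sign subinterval, all sign changes being absorbed inside the single contaminated piece produced by \lem{lem32gen}, \lem{omega2} or \lem{march23}. Once this case analysis is organized, the remaining verification (interpolation conditions, smoothness $C^{k+r-2}$, and membership in $\ds$) is routine and identical to the intertwining case.
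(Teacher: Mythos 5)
Your proposal follows essentially the same route as the paper: first reduce the indices with $(r,m_1,m_2,m_3)\in\intset_{\text{intertwining}}$ (all $r\ge 3$, plus $(1,k,-1,0,0)$ and $(2,k,1,0,1)$) to \thm{splnthm1} via the cone observation $S=(S-f)+f\in\ds$, and then rerun the blending construction of \thm{splnthm1} for the four exceptional indices, substituting \lem{lem33}, \lem{omega3}, \lem{omega2} and \lem{march23} (and \lem{lem32gen} where it still applies) as the local pieces near $A_p$, $Y_s$ and $\pm 1$. Your case assignments and sign/blending bookkeeping agree with the paper's proof (which states this substitution and omits the details), so the proposal is correct.
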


\begin{proof}
First, we note that \thm{splcopositive} immediately follows from \thm{splnthm1} if
\begin{align*}
(r,k,m_1,m_2, m_3) \in & \left\{ (1,k,-1,0,0), (2,k,1,0,1) \st k\in\N \right\} \\
& \cup \left\{(r,k,m_1,m_2, m_3) \in  \intset_{\text{intertwining}} \st r\ge 3 \right\} .
\end{align*}
In the remaining cases,
its  proof is exactly the same as that of \thm{splnthm1} with the only difference that, while constructing local polynomial pieces near the points from $Y_s\cup A_p\cup \{\pm 1\}$, instead of  \lem{lem32gen}, we need to use:
\begin{itemize}
\item \lem{lem33} near $A_p$ and a (trivial) linear interpolant near $Y_s$, if $r=0$ and $k=2$,
\item \lem{omega3} near $A_p$, if $r=1$ and $k=3$,
\item  \lem{omega2} near   $Y_s$ and \lem{omega3} near  $A_p$, if $r=1$, $k=2$ and $m_2=1$,
\item \lem{march23} near   $Y_s$, if $r=2$, $k=3$ and $m_2=1$.
\end{itemize}
We omit details.
\end{proof}

\sect{Intertwining polynomial approximation of truncated power functions and splines with interpolatory constraints}

We reserve the special notation $\tn$ for the Chebyshev partition of $I$, \ie
 $\tn := \{x_j\}_{j=0}^n$,  where
$x_{j}:= x_{j,n} :=  \cos\left(j\pi/n\right)$, $0 \leq j \leq n$. Also, for $1\le j \le n$, we denote
\[
I_{j} := I_{j,n} :=  [x_{j}, x_{j-1}] , \quad   |I_j| = x_{j-1}-x_j   \andd \psi_{j}(x) := \psi_{j,n}(x) := \frac{|I_j|}{|x-x_j|+|I_j|} .
\]
Note that $|I_{j\pm 1}| < 3|I_j|$ and $\rho_n(x) < |I_j| < 5 \rho_n(x)$, $x\in I_j$, and so $\psi_{j\pm 1}(x) \sim \psi_j (x)$,   $x\in I$.

Additionally, we denote
\[
(x-\lambda)_+^m := (x-\lambda)^m \chi[\lambda, 1](x) , \quad m\ge 0,
\]
where
\[
\chi[a,\,b](x) := \left\{
\begin{array}{ll}
1 , & \mbox{\rm if }  x\in[a,\,b], \\
0,  & \mbox{\rm otherwise.}
\end{array}
 \right.
  \]
It is also convenient to denote $x_j := 1$ for $j<0$, and $x_j := -1$ for $j>n$.

\subsection{Approximation of truncated power functions}


\begin{lemma}[intertwining  approximation of truncated powers with interpolatory constraints]  \leavevmode \label{newlemma}
Let $s,p,r,m \in\N_0$, $Y_s \in\Y_s$ and $A_p \in \A_p$, and
suppose that  $j,n\in\N$ are such that   the interval $(x_{j+1}, x_{j-2})$ does not contain any points from $Y_s \cup A_p \cup\{\pm 1\}$, and let
$\lambda\in [{x_{j}},{x_{j-1}}]$ and $g_{m,\lambda} (x):= (x-\lambda)_+^{m}$.

Then, there exists a sufficiently large constant $\mu = \mu(s,p,r,m)$ and
 polynomials $R_1$ and $R_2$ of degree $\le c(\mu) n$ such that
\be \label{ii1}
 R_1, R_2 \in \I^{(r)}(g_{m,\lambda}, A_p\cup Y_s \cup\{\pm 1\}) ,
 \ee
\be \label{ii2}
\left\{ R_1 - g_{m,\lambda} , g_{m,\lambda} - R_2 \right\}  \subset  \Delta^{(0)}(Y_s) ,
\ee
\ie  $\{R_1,R_2\}$ is an intertwining pair of  polynomials  for $g_{m,\lambda}$ with respect to
$Y_s$,
and
\be \label{ii3}
\left|R_i(x) - g_{m,\lambda}(x) \right| \leq c(\mu) \psi_j(x)^{\mu-m} |I_j|^{m} ,  \quad \text{for all $x\in I$ and $i=1,2$.}
\ee
\end{lemma}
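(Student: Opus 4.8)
The plan is to manufacture $R_1$ and $R_2$ from one well-localized base approximant of $g_{m,\lambda}$, to bake the Hermite data directly into that base so that \ineq{ii1} holds automatically, and only then to repair the intertwining inequalities \ineq{ii2} by adding sign-definite corrections that preserve both \ineq{ii1} and \ineq{ii3}. Everything reduces to the Heaviside step $\chi:=\chi[\lambda,1]$.

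By standard polynomial kernel constructions on the Chebyshev partition $\tn$ (see, e.g., \cite{kls21}), for any prescribed large $\mu$ there is a polynomial $D$ of degree $\le c(\mu)n$ with $0\le D\le 1$ and $|D(x)-\chi(x)|\le c(\mu)\psi_j^{\mu}(x)$, $x\in I$, and one may in addition arrange $D(\beta)=\chi(\beta)$ together with $D^{(i)}(\beta)=0$, $1\le i\le r$, at each point $\beta$ of the separated set $A_p\cup Y_s\cup\{\pm1\}$ (the separation hypothesis is what makes these vanishing constraints cheap). Put $R_0:=(x-\lambda)^mD$. The weight identity $|x-\lambda|^m\psi_j^{\mu}(x)\sim|I_j|^m\psi_j^{\mu-m}(x)$ (valid both for $|x-\lambda|\lesssim|I_j|$, where $|x-\lambda|^m\lesssim|I_j|^m$ and $\psi_j(x)\sim1$, and for $|x-\lambda|\gtrsim|I_j|$, where $\psi_j(x)\sim|I_j|/|x-\lambda|$) then yields $|R_0(x)-g_{m,\lambda}(x)|\le c(\mu)\psi_j^{\mu-m}(x)|I_j|^m$, which is \ineq{ii3}.

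Next I would verify that $R_0$ already satisfies \ineq{ii1}. The hypothesis that $(x_{j+1},x_{j-2})$ is free of points of $A_p\cup Y_s\cup\{\pm1\}$ means that each such $\beta$ lies in a neighborhood on which $\chi\equiv0$ (if $\beta<\lambda$) or $\chi\equiv1$ (if $\beta>\lambda$); hence $g_{m,\lambda}$ coincides there, to all orders, with $0$ or with $(x-\lambda)^m$. Combined with $D(\beta)=\chi(\beta)$ and $D^{(i)}(\beta)=0$, $1\le i\le r$, the Leibniz rule gives $R_0^{(i)}(\beta)=g_{m,\lambda}^{(i)}(\beta)$ for $0\le i\le r$, so $R_0\in\I^{(r)}(g_{m,\lambda},A_p\cup Y_s\cup\{\pm1\})$; in particular $R_0-g_{m,\lambda}$ vanishes to order $r$ at every $\beta$.

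Finally I would correct the sign. The device is an auxiliary polynomial $\Psi\in\Delta^{(0)}(Y_s)$ of degree $\le c(\mu)n$ that vanishes to order $r$ at each $\beta$ and is comparable to the target error, $|\Psi(x)|\sim\psi_j^{\mu-m}(x)|I_j|^m$ away from the interpolation points; such $\Psi$ is assembled from a nonnegative polynomial comparable to $\psi_j^{\mu-m}$, a sign-carrying factor placing it in $\Delta^{(0)}(Y_s)$, and factors $(x-\beta)^{r+1}$ forcing the vanishing. Setting $R_1:=R_0+C\Psi$ and $R_2:=R_0-C\Psi$ with a fixed large $C=C(\mu)$, and using that $(-1)^iC\Psi\ge0$ on $[y_{i+1},y_i]$ while $R_0-g_{m,\lambda}$ is dominated there by $C|\Psi|$, one gets $(-1)^i(R_1-g_{m,\lambda})\ge0$ and $(-1)^i(g_{m,\lambda}-R_2)\ge0$ on every $[y_{i+1},y_i]$, i.e. \ineq{ii2}, while \ineq{ii1} is untouched and \ineq{ii3} survives after enlarging $\mu$ by a bounded amount at the outset. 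The step I expect to be the main obstacle is exactly this production of $\Psi$: it must simultaneously carry the alternating $\Delta^{(0)}(Y_s)$ sign, dominate $R_0-g_{m,\lambda}$ with the correct sign on the single $Y_s$-interval containing the kink at $\lambda$ (where $\psi_j\sim1$ and both quantities are of size $|I_j|^m$), and vanish to precisely the right order at the separated $\beta$'s so that $|R_0-g_{m,\lambda}|\le C|\Psi|$ persists near each of them. The separation hypothesis on $(x_{j+1},x_{j-2})$ is what aligns the parity of $(x-\lambda)^m$ with the required sign $(-1)^i$ and confines the delicate behavior to one interval, making the global comparison valid.
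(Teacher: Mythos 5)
Your skeleton (an unsigned base approximant $R_0=(x-\lambda)^m D$ carrying the Hermite data, followed by a sign-definite correction $R_{1,2}=R_0\pm C\Psi$) is close in spirit to the paper's treatment of odd $m$, and your steps producing $R_0$ are essentially sound: the identity $|x-\lambda|\,\psi_j(x)\le |I_j|$ does give \ineq{ii3} for $R_0$, and the Leibniz argument gives \ineq{ii1} (although the existence of $D$ satisfying the exact Hermite conditions, $0\le D\le 1$, and the $\psi_j^{\mu}$ error bound \emph{simultaneously} is asserted rather than proved). The genuine gap is the step you yourself flag as the main obstacle: the polynomial $\Psi$ you need cannot exist as specified. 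Because $R_0-g_{m,\lambda}$ carries no sign information anywhere, your argument requires the domination $|R_0-g_{m,\lambda}|\le C|\Psi|$ on \emph{all} of $I$. But a nonzero polynomial in $\Delta^{(0)}(Y_s)$ must change sign at each point of $Y_s$ and at no other interior point, so its zeros at points of $Y_s$ must have odd multiplicity, while its zeros at interior points of $A_p$ (needed to preserve \ineq{ii1}) must have even multiplicity; your $R_0-g_{m,\lambda}$, on the other hand, is only guaranteed to vanish to order $r+1$ at each such point, its $(r+1)$st derivative there being uncontrolled. Since $r+1$ cannot have both parities, at one of the two families of points $\Psi$ is forced to vanish to order at least $r+2$, and near such a point the ratio $|R_0-g_{m,\lambda}|/|\Psi|$ generically blows up, so no constant $C$ works. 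Repairing this forces you to boost the interpolation order of $D$ well beyond $r$ (to at least $2r+2$, matching the multiplicities $2r+1$ at $Y_s$ and $2r+2$ at $A_p$ that the parity constraints dictate), and then to establish a uniform lower bound for $|\Psi|$ off its zeros against the error of a degree-$c(\mu)n$ polynomial --- at which point you are essentially re-deriving the machinery the paper quotes.

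The paper's proof is structured precisely so that no global domination is ever needed. For even $m$ there is no post-correction at all: it invokes Lemma~19 of \cite{hky} with coalescing interpolation points --- odd multiplicity $2r+1$ at each $y\in Y_s$, even multiplicity $2r+2$ at each $\alpha\in A_p$ --- so that $R_i-g_{m,\lambda}$ has the sign pattern \ineq{ii2} and the Hermite conditions \ineq{ii1} built in simultaneously. For odd $m$, where such a direct construction provably cannot work (it forces $R_i(\lambda)=0$, incompatible with one-sidedness at the kink), the paper applies the even case to $g_{m-1,\lambda}$ on the refined Chebyshev grid with one artificial sign-change point $y'=x_{2j+1,2n}$ adjoined to $Y_s$, and multiplies by $(x-\lambda)$. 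The resulting functions then violate the required sign condition only on the single tiny interval $[y',\lambda]$, which by hypothesis is far from every point of $Y_s\cup A_p\cup\{\pm 1\}$; on that interval the correction polynomial $H$ is bounded below by a positive absolute constant, so the only comparison needed is with the uniform norm of the error, and the parity obstruction at the interpolation points never arises.
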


\begin{proof} In order to avoid possible confusion, we note that, in this proof,  the prime notation is used to denote new parameters/functions  and is  not  used to denote any  derivatives.
It is also clear that, without loss of generality, we can assume that  $\pm 1 \in A_p$.

\medskip\noindent
{\bf Case 1: even $m$.}
For even $m$ and without \ineq{ii1}, \lem{newlemma} is \cite{hky}*{Lemma 19} with
\[
\{a_1, \dots, a_k\} =   \left\{ y  \in Y_s \st y  < x_j \right\}
\andd
\{b_1, \dots, b_l\} =   \left\{ y  \in Y_s \st y  >  x_j \right\} .
\]
For even $m$, the same construction also works if points $a_i$'s and $b_i$'s are allowed to coalesce, which implies Hermite interpolation of $g_{m,\lambda}$ by $R_i$'s at these points.
Hence, 
 it is  sufficient to set
\[
\{a_1, \dots, a_k\} :=  \bigcup_{y \in Y_s, \; y < x_j}
  \big\{ \underbrace{y , \dots, y}_{r'} \big\}
\cup
\bigcup_{\alpha \in A_p, \; \alpha < x_j} \big\{ \underbrace{\alpha , \dots, \alpha}_{r''} \big\}
\]
and
\[
\{b_1, \dots, b_l\} :=
 \bigcup_{y \in Y_s, \; y > x_j}
  \big\{ \underbrace{y , \dots, y}_{r'} \big\}
\cup
\bigcup_{\alpha \in A_p, \; \alpha > x_j} \big\{ \underbrace{\alpha , \dots, \alpha}_{r''} \big\} ,
\]
where
$r'$ and $r''$ are, respectively, odd and even integers which are not less than $r+1$ (for example, we can set $r' := 2r+1$ and $r'' := 2(r+1)$),
 so that  $R_i - g_{m,\lambda}$, $i=1,2$, changes its sign  at the points in $Y_s$ and does not change it  at the points in  $A_p$.

A byproduct of the construction in \cite{hky}*{Lemma 19} was the fact that $R_i(\lambda) = 0$, $i=1,2$ in the case $m>0$. This makes it impossible for the same construction to work for odd $m$. For example, if $m=1$, then it is clear that there does not exist a polynomial $R\in\Pn$ such that $R(\lambda)=0$ and, at the same time, $R \ge g_{1,\lambda}$ on $(\lambda-\e, \lambda+\e)$.

\medskip
\noindent
{\bf Case 2: odd  $m$.} Suppose that $m$ is odd, and  let $n':=2n$, $j' := 2j$ and
 $Y_s' := Y_s \cup \{ y' \}$, where $y' := x_{j'+1,n'}$.

 Note that
\[
x_{j', n'}  = x_j, \quad  I_{j'+1,n'} = [x_{j'+1, n'}, x_{j', n'}]\subset I_j \andd \dist\{y', \{x_j, x_{j+1}\} \ge |I_{j+1}|/4 .
\]
It now follows from the statement of the lemma for even $m$ (Case 1 above) that there exist polynomials $R_1'$ and $R_2'$ such that
\be \label{ii11}
 R_1', R_2' \in \I^{(r)}(g_{m-1,\lambda}, A_p\cup Y_s' ) ,
 \ee
\be \label{ii21}
\left\{ R_1' - g_{m-1,\lambda} , g_{m-1,\lambda} - R_2' \right\}  \subset  \Delta^{(0)}(Y_s') ,
\ee
and
\be \label{ii31}
\left|R_i'(x) - g_{m-1,\lambda}(x) \right| \leq c(\mu) \psi_j(x)^{\mu-m+1} |I_j|^{m-1} ,  \quad \text{for all $x\in I$ and $i=1,2$.}
\ee
We  define
\[
R_i''(x):= (x-\lambda) R_i'(x), \quad i=1,2 ,
\]
and note that $R_1'', R_2'' \in \I^{(r)}(g_{m,\lambda}, A_p\cup Y_s  )$ and, for all $x\in I$ and $i=1,2$,
\be \label{april24}
\left|R_i''(x) - g_{m,\lambda}(x) \right| \leq c(\mu) |x-\lambda| \psi_j(x)^{\mu-m+1} |I_j|^{m-1}  \le c(\mu)   \psi_j(x)^{\mu-m} |I_j|^{m}  ,
\ee
since
\[
|x-\lambda| \le |x-x_j| +|I_j| = \psi_j(x)^{-1}  |I_j|.
\]
Let $J$ be the index such that $y_i \in [x_{j-2},1]$  for $1\le i \le J$, and $y_i \in [-1,x_{j+1}]$  for $J+1\le i \le s$, and note that, if  $J=0$ or $J= s$, then there are no $y_i$'s in the intervals
$[x_{j-2},1]$  or $[-1,x_{j+1}]$, respectively.

Now, letting $h$ be either $R_1' - g_{m-1,\lambda}$ or  $g_{m-1,\lambda} - R_2'$, we
 note that
\[
h \in \Delta^{(0)}(Y_s')\quad \iff \quad
\begin{cases}
(-1)^i h \ge 0  &  \text{on $[y_{i+1}, y_i]$, for $0\le i \le J-1$,}   \\
(-1)^J h \ge 0  & \text{on $[y', y_J]$,}\\
(-1)^{J+1} h \ge 0  & \text{on $[y_{J+1}, y']$,}\\
(-1)^{i+1}  h \ge 0  &  \text{on $[y_{i+1}, y_i]$, for $J+1\le i \le s+1$,}
\end{cases}
\]
and so, for $h' := (\cdot - \lambda) h$, we have
\[
\begin{cases}
(-1)^i h' \ge 0  &  \text{on $[y_{i+1}, y_i]$, for $0\le i \le J-1$,}   \\
(-1)^J h' \ge 0  &  \text{on $[\lambda, y_J]$,}   \\
(-1)^{J+1} h' \ge 0  & \text{on $[y',\lambda]$,}\\
(-1)^{J} h' \ge 0  & \text{on $[y_{J+1}, y']$,}\\
(-1)^{i}  h' \ge 0  &  \text{on $[y_{i+1}, y_i]$, for $J+1\le i \le s+1$.}
\end{cases}
\]
In other words, all the right inequalities for $h' \in \Delta^{(0)}(Y_s)$ are satisfied except that  the restriction  $(-1)^J h' \ge 0$ that we need on $[y_{J+1}, y_J]$ fails on its small subinterval  $[y',\lambda]$.
Hence, the polynomials $R_i''$, $i=1,2$,  need to be corrected on $[y',\lambda]$
 in order to guarantee that
$ \widetilde R_1'' - g_{m,\lambda} , g_{m,\lambda} - \widetilde R_2'' \in  \Delta^{(0)}(Y_s)$, where $\widetilde R_i''$, $i=1,2$, are these corrected polynomials.

We note that it follows from \ineq{april24} that
\[
\norm{h'}{}  \le c(\mu) \norm{ \psi_j^{\mu-m}}{} \cdot |I_j|^{m} \le c(\mu) |I_j|^{m} := c_1 |I_j|^{m}.
\]
We now recall (see \eg \cites{dzya})  that the algebraic polynomial of degree $\le 4n-2$,
\[ t_{j}(x):=
(x-x_j^0)^{-2} \cos^2 2n\arccos x  +
(x-\xb_j)^{-2} \sin^2 2n\arccos x ,
 \]
 where
 \[
 \xb_{j}:=\cos\left(\frac{j\pi}{n}-\frac{\pi}{2n}\right) \andd
 x_{j}^0 :=
 \begin{cases}
 \cos\left(\frac{j\pi}{n}-\frac{\pi}{4n}\right) , &  \text{if $j < n/2$,}\\
 \cos\left(\frac{j\pi}{n}-\frac{3\pi}{4n}\right), &  \text{if $j \ge  n/2$,}\\
 \end{cases}
 \]
 satisfies
 \[
 |I_j|^{-2} \psi_j(x)^2 \le t_j(x) \le 4000 |I_j|^{-2} \psi_j(x)^2, \quad \text{$x\in I$ and $1\le j\le n-1$.}
 \]
 We now  define
 \[
 H(x):= t_j(x)^{\mu'}   |I_j|^{2\mu'}  \prod_{y \in Y_s} \frac{ (x-y)^{2r+1} }{|x_j-y|^{2r+1}}   \prod_{\alpha \in A_p} \frac{ (x-\alpha)^{2r+2} }{|x_j-\alpha|^{2r+2}}
 \]
 with $\mu' :=  \mu + (2r+2)(s+p)$, for example,
 and note that
 \[
 H  \in \I^{(r)}(0, A_p\cup Y_s) \cap \Delta^{(0)}(Y_s)
 \]
 (in particular, $(-1)^J H \ge 0$ on $[y_{J+1}, y_J]$),
\begin{align*}
|H(x)|  &  \le c    \psi_j(x)^{2\mu'}  \prod_{y \in Y_s} \frac{|x-y|^{2r+1} }{|x_j-y|^{2r+1}}  \prod_{\alpha \in A_p} \frac{ |x-\alpha|^{2r+2} }{|x_j-\alpha|^{2r+2}} \\
& \le
 c    \psi_j(x)^{2\mu'-(2r+2)(s+p)}
 \le
 c    \psi_j(x)^{\mu} ,
\quad x\in I,
\end{align*}
since, for $\beta  \in Y_s \cup A_p$,
\begin{align*}
 \frac{ |x-\beta|}{|x_j-\beta| } & \le 1+ \frac{ |x-x_j|  }{|x_j-\beta| } \le 1 + \frac{ |x-x_j| + |I_j|}{|I_j|} \cdot \frac{|I_j|}{|x_j-\beta| } \\
 & \le 1 + \psi_j(x)^{-1} \cdot \frac{|I_j|}{\min\{ |I_{j-1}|, |I_{j+1}|\}}
 \le 1 + 3 \psi_j(x)^{-1} \le 4 \psi_j(x)^{-1} .
\end{align*}
Also, for $x\in [y',\lambda]$,  we have
\[
t_j(x) \ge (|x-x_j|+|I_j|)^{-2} \ge \left( \max\{|y'-x_j|+|I_j|, |\lambda-x_j|+|I_j| \}\right)^{-2} \ge 16^{-1} |I_j|^{-2}
\]
and
\[
 \frac{ |x-\beta|}{|x_j-\beta| } \ge \min\left\{ \frac{ |x_{j+1}-y'|}{|x_{j+1}-  x_j|} , \frac{ |x_{j-2}-\lambda|}{|x_{j-2}-x_j|} \right\} \ge
 \frac{1}{4} .
\]
Hence,
\[
(-1)^J H(x) =  |H(x)| \ge 4^{-2\mu' - (2r+1)s - (2r+2)p } := c_2 , \quad x\in [y',\lambda] .
\]
This implies that, for  the function
\[
h''   := h'   + \widetilde H  ,  \quad \text{where $\widetilde H(x):= (c_1/c_2)  H(x) |I_j|^m$,}
\]
we have
\[
(-1)^J h''(x) \ge - \norm{h'}{} + (c_1/c_2)  (-1)^J H(x) |I_j|^m \ge 0, \quad x\in [y',\lambda],
\]
and so $h'' \in \Delta^{(0)}(Y_s)$.

It remains to notice that polynomials
\[
\widetilde R_1''  := R_1''   +   \widetilde H  \andd \widetilde R_2''  := R_2''   -   \widetilde H
\]
satisfy \ineq{ii1}-\ineq{ii3}, and the proof is now complete.
\end{proof}

\subsection{Approximation of general splines}

If $V_m=\{v_i\}_{i=1}^m$  is some finite collection of points in $I$, then
we  denote by $\Sigma_{k}(\zn, V_m)$ the subset
of $\Sigma_{k}(\zn)$ consisting of those continuous piecewise polynomials $S$ that do not have any knots ``too close" to the
points in $V_m$.
More precisely, if
  $j_i$, $1\leq i\leq m$, are chosen so that $v_i\in[z_{j_i},z_{j_i-1})$ (or $v_i\in[z_{j_i},z_{j_i-1}]$ if $v_i=1$, \ie  $j_1:=1$ in this case),
then $S$ is in $\Sigma_{k}(\zn, V_m)$ if and only if $S\in \Sigma_{k}(\zn)$ and, for every  $1\leq i\leq m$, the restriction of $S$ to
 $(z_{j_i+1},z_{j_i-2})$  is a polynomial.

With this notation,
the following lemma which is a corollary of a more general result \cite{hky}*{Theorem 4}, can be stated as follows.

  \begin{lemma}[see \cite{hky}*{Theorem 4}]   \label{indc3}
Let $s, m\in\N_0$,  $Y_s \in\Y_s$,
  $\mu \geq 2m+30$,   and let
$S \in \Sigma_{2m+1}(\tn, Y_s)\cap C^{2m-1}$,
 where   $n> c(Y_s)$ is such that there are
at least $4$ knots $x_j$ in each interval $(y_{i+1}, y_{i})$, $0\le i\le s$.

Then, there exists an intertwining pair
of polynomials $\{P_1, P_2\}\subset \Poly_{c(\mu)n}$ for $S$ with respect to
$Y_s$ such that
\[
|P_1(x)-P_2(x)|  \leq  c(m,\mu,s) \sum_{j=1}^{n-1}
\psi_j(x)^\mu
E_{2m}(S, \un) ,
\]
where
$E_n(f, [a,b])  := \inf_{P_n\in\Pn} \|f-P_n\|_{C[a,b]}$.
\end{lemma}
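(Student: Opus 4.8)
The plan is to reduce the assertion to the intertwining approximation of truncated powers already established in \lem{newlemma}, in a manner parallel to \cite{hky}*{Theorem 4}. Since $S\in\Sigma_{2m+1}(\tn, Y_s)\cap C^{2m-1}$ is a continuous piecewise polynomial of degree $\le 2m$ whose only possible discontinuity is a jump of its $2m$-th derivative at a Chebyshev knot, I would first use the Peano/truncated-power representation to write
\[
S(x) = p_0(x) + \sum_{j} c_j\,(x-x_j)_+^{2m},
\]
where $p_0\in\Poly_{2m}$, the sum runs over those $j\in\{1,\dots,n-1\}$ for which $S^{(2m)}$ jumps at $x_j$, and $c_j$ equals that jump divided by $(2m)!$. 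Because $S\in\Sigma_{2m+1}(\tn, Y_s)$ has no knots near $Y_s$, every such knot $x_j$ is separated from $Y_s\cup\{\pm1\}$, so that $(x_{j+1},x_{j-2})$ contains no point of $Y_s\cup\{\pm1\}$; this is exactly the hypothesis required to invoke \lem{newlemma} at each knot.

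Next, for each knot $x_j$ I would apply \lem{newlemma} to $g_{2m,x_j}(x)=(x-x_j)_+^{2m}$ with $r=0$ (the exponent $2m$ is \emph{even}, so only the simpler Case~1 of that lemma is used) and with its exponent parameter taken at least $\mu+2m$, which is admissible since it may be chosen arbitrarily large. This yields an intertwining pair $\{R_1^{(j)},R_2^{(j)}\}\subset\Poly_{c(\mu)n}$ for $g_{2m,x_j}$ with respect to $Y_s$ obeying $\bigl|R_i^{(j)}(x)-(x-x_j)_+^{2m}\bigr|\le c(\mu)\,\psi_j(x)^{\mu}\,|I_j|^{2m}$ on $I$ (using $\psi_j\le1$ to lower the exponent from $\ge\mu$ to $\mu$). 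I then assemble
\[
P_1 := p_0 + \sum_{c_j>0} c_j R_1^{(j)} + \sum_{c_j<0} c_j R_2^{(j)},\qquad
P_2 := p_0 + \sum_{c_j>0} c_j R_2^{(j)} + \sum_{c_j<0} c_j R_1^{(j)}.
\]
The polynomial $p_0$ and the remaining truncated powers cancel, so $P_1-S$ and $S-P_2$ are nonnegative combinations of the functions $R_1^{(j)}-g_{2m,x_j}$ and $g_{2m,x_j}-R_2^{(j)}$, each of which lies in $\Delta^{(0)}(Y_s)$ by \ineq{ii2}. Since $\Delta^{(0)}(Y_s)$ is a convex cone, both $P_1-S$ and $S-P_2$ lie in it, so $\{P_1,P_2\}$ is the required intertwining pair, of degree $\le c(\mu)n$.

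For the error bound I would observe that $P_1-P_2=\sum_j|c_j|\bigl(R_1^{(j)}-R_2^{(j)}\bigr)$, whence by \ineq{ii3}
\[
|P_1(x)-P_2(x)|\le \sum_j |c_j|\,\bigl|R_1^{(j)}(x)-R_2^{(j)}(x)\bigr|\le 2c(\mu)\sum_j |c_j|\,|I_j|^{2m}\,\psi_j(x)^{\mu}.
\]
The decisive step is the comparison $|c_j|\,|I_j|^{2m}\le c(m)\,E_{2m}(S,\un)$: on the block $\un=I_j\cup I_{j+1}$ the unique interior Chebyshev point is $x_j$, so there $S$ equals a degree-$2m$ polynomial plus $c_j(x-x_j)_+^{2m}$, and the lower estimate $E_{2m}\bigl((\cdot-x_j)_+^{2m},\un\bigr)\ge c\,|I_j|^{2m}$ gives the claim. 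Finally, when $x_j$ is \emph{not} a knot, $S$ is a single polynomial on $\un$ and $E_{2m}(S,\un)=0$, so the sum over knots coincides with $\sum_{j=1}^{n-1}$, producing exactly the asserted bound with a constant $c(m,\mu,s)$.

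I expect the main obstacle to be the equivalence $|c_j|\,|I_j|^{2m}\sim E_{2m}(S,\un)$ — specifically the uniform lower bound for the best degree-$2m$ polynomial approximation of a truncated power on $\un$, which must hold independently of $j$ and of where $x_j$ sits inside $\un$; this is precisely where the comparability $|I_{j\pm1}|\sim|I_j|$ of neighboring Chebyshev intervals keeps $x_j$ bounded away from the endpoints of $\un$. A secondary bookkeeping point is verifying that the $\Sigma_{2m+1}(\tn, Y_s)$ structure guarantees the separation hypothesis of \lem{newlemma} at every knot.
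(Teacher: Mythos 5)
Your argument follows exactly the route by which \cite{hky}*{Theorem 4} is proved (the paper itself offers no proof of \lem{indc3} beyond that citation): write $S=p_0+\sum_j c_j(x-x_j)_+^{2m}$, apply \lem{newlemma} to each truncated power, assemble using the fact that $\Delta^{(0)}(Y_s)$ is a convex cone, and convert $|c_j|\,|I_j|^{2m}$ into $E_{2m}(S,\un)$. The step you singled out as the main obstacle is in fact fine: on $\un$ the spline equals $q+c_j(x-x_j)_+^{2m}$ with $q\in\Poly_{2m}$, and by affine rescaling $E_{2m}\bigl((\cdot-\theta)_+^{2m},[0,1]\bigr)$ is a positive continuous function of the relative knot position $\theta$, which the comparability $|I_{j+1}|\sim|I_j|$ confines to a compact subset of $(0,1)$; this yields the uniform lower bound $E_{2m}\bigl((\cdot-x_j)_+^{2m},\un\bigr)\ge c(m)\,|I_j|^{2m}$.

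The genuine gap is the separation claim you dismissed as ``secondary bookkeeping''. It is not true that an active knot $x_j$ forces $(x_{j+1},x_{j-2})\cap Y_s=\emptyset$. By the definition of $\Sigma_{2m+1}(\tn,Y_s)$, a point $y_i\in[x_{j_i},x_{j_i-1})$ only makes $S$ a single polynomial on $(x_{j_i+1},x_{j_i-2})$, i.e., it deactivates the two knots $x_{j_i}$ and $x_{j_i-1}$; hence an active $x_j$ only guarantees $Y_s\cap[x_{j+1},x_{j-1})=\emptyset$. A point $y_i\in[x_{j-1},x_{j-2})$ --- say $y_i=x_{j-1}$, so $j_i=j-1$ --- is perfectly compatible with $x_j$ being active, yet it lies inside $(x_{j+1},x_{j-2})$, so \lem{newlemma} with index $j$ is not applicable at such a knot. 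The repair uses the hypothesis you never invoked for this purpose, namely that each $(y_{i+1},y_i)$ contains at least $4$ knots: since $\lambda=x_j$ belongs to both $I_j$ and $I_{j+1}$, you may invoke \lem{newlemma} with index $j$ or with index $j+1$, and the four-knot condition ensures that at least one of the intervals $(x_{j+1},x_{j-2})$, $(x_{j+2},x_{j-1})$ is free of points of $Y_s$ (if one point of $Y_s$ lay in $[x_{j-1},x_{j-2})$ and the next in $[x_{j+2},x_{j+1})$, only the three knots $x_{j+1},x_j,x_{j-1}$ would separate two consecutive points of $Y_s$, a contradiction). Since $\psi_{j+1}(x)\sim\psi_j(x)$ and $|I_{j+1}|\sim|I_j|$, switching to the neighbouring index changes the bounds only by a factor $c(\mu,m)$, after which your assembly and the final estimate go through unchanged.
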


Exactly the same proof also works with an additional condition that polynomials $P_1$ and $P_2$ (Hermite) interpolate $S$ at the points in $Y_s \cup A_p \cup \{\pm 1\}$. Moreover, in view of \lem{newlemma}, the restriction that the order of the spline $S$  is odd is no longer required, and we arrive at the following result.

  \begin{lemma}    \label{apprsplines}
Let $k\in\N$, $p,s, r\in\N_0$,  $Y_s \in\Y_s$, $A_p\in\A_p$,
  $\mu \geq k+30$,   and let
$S \in \Sigma_{k}(\tn, Y_s\cup A_p \cup\{\pm 1\})\cap C^{k-2}$,
 where    $n> c(d(Y_s, A_p))$ is such that there are
at least $4$ knots $x_j$
in each nonempty open interval $(\beta_j,  \beta_{j-1})$, $1\le j \le s+p+1$, where $\{\beta_j\}_{j=1}^{s+p} =  Y_s\cup A_p$, $\beta_0 = 1$ and $\beta_{s+p+1}=-1$  (see \ineq{unionya}).

Then, there exists an intertwining pair
of polynomials $\{P_1, P_2\}\subset \Poly_{c(\mu)n}$ for $S$ with respect to
$Y_s$ such that
\[
 P_1, P_2 \in \I^{(r)}(S, A_p\cup Y_s \cup\{\pm 1\})
 \]
 and
\[
|P_1(x)-P_2(x)|  \leq  c(k,r,\mu,s,p) \sum_{j=1}^{n-1}
\psi_j(x)^\mu  E_{k-1}(S, \un) .
\]
\end{lemma}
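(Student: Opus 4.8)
The plan is to reduce \lem{apprsplines} to \lem{newlemma}, following verbatim the scheme used to prove \lem{indc3} (that is, \cite{hky}*{Theorem 4}), but feeding in the interpolating, sign-controlled building blocks of \lem{newlemma} in place of those of \cite{hky}*{Lemma 19}. The only conceptual novelties---Hermite interpolation at $Y_s\cup A_p\cup\{\pm1\}$ and the admission of even-order splines (equivalently, odd-degree truncated powers)---have already been absorbed into \lem{newlemma}, so what remains is assembly together with one coefficient estimate.

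First I would write $S$ in its truncated-power form. Since $S\in\Sigma_{k}(\tn, Y_s\cup A_p\cup\{\pm1\})\cap C^{k-2}$ is a spline of minimal defect,
\[
S(x)=P_0(x)+\sum_{j=1}^{n-1} b_j\,(x-x_j)_+^{k-1},\qquad P_0\in\Poly_{k-1},
\]
where $(k-1)!\,b_j$ equals the jump of $S^{(k-1)}$ across $x_j$, so $b_j\ne0$ only at the active knots. The separation hypothesis (at least four knots of $\tn$ in every gap $(\beta_{j},\beta_{j-1})$, together with $S\in\Sigma_k(\tn,\cdot)$) guarantees that for each active knot $x_j$ the interval $(x_{j+1},x_{j-2})$ contains no point of $Y_s\cup A_p\cup\{\pm1\}$. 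Hence for every such $j$ I may apply \lem{newlemma} with $m=k-1$, $\lambda=x_j$, and parameter $\mu+k-1$ (which exceeds the threshold of that lemma since $\mu\ge k+30$), obtaining an intertwining pair $\{R_1^{(j)},R_2^{(j)}\}\subset\Poly_{c(\mu)n}$ for $g_{k-1,x_j}=(\cdot-x_j)_+^{k-1}$ with
\[
R_\ell^{(j)}\in\I^{(r)}\bigl(g_{k-1,x_j},\,A_p\cup Y_s\cup\{\pm1\}\bigr)
\quad\text{and}\quad
\bigl|R_\ell^{(j)}(x)-g_{k-1,x_j}(x)\bigr|\le c\,\psi_j(x)^{\mu}\,|I_j|^{k-1}
\]
for $x\in I$ and $\ell=1,2$; the choice of parameter makes the exponent $(\mu+k-1)-m$ of \lem{newlemma} collapse to $\mu$.

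Next I would assemble the pair by grouping the truncated powers according to the sign of $b_j$:
\[
P_1:=P_0+\sum_{b_j>0}b_j R_1^{(j)}+\sum_{b_j<0}b_j R_2^{(j)},\qquad
P_2:=P_0+\sum_{b_j>0}b_j R_2^{(j)}+\sum_{b_j<0}b_j R_1^{(j)},
\]
both of degree $\le c(\mu)n$. Because $\ds$ is closed under addition and under multiplication by nonnegative scalars, after this sign grouping each summand of $P_1-S$ and of $S-P_2$ lies in $\ds$, so $\{P_1,P_2\}$ is an intertwining pair for $S$ with respect to $Y_s$. Moreover $P_\ell-S$ is a linear combination of the differences $R_\ell^{(j)}-g_{k-1,x_j}$, each of which vanishes to order $r$ at every point of $A_p\cup Y_s\cup\{\pm1\}$; hence $P_1,P_2\in\I^{(r)}(S,A_p\cup Y_s\cup\{\pm1\})$, as required.

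Finally, for the quantitative estimate I would combine the triangle inequality with the pointwise bound above,
\[
|P_1(x)-P_2(x)|\le\sum_{j=1}^{n-1}|b_j|\,\bigl|R_1^{(j)}(x)-R_2^{(j)}(x)\bigr|
\le 2c\sum_{j=1}^{n-1}|b_j|\,\psi_j(x)^{\mu}\,|I_j|^{k-1},
\]
and then convert coefficients into local errors via $|b_j|\,|I_j|^{k-1}\le c(k)\,E_{k-1}(S,\un)$. This is the only genuinely computational point: on $\un=[x_{j+1},x_{j-1}]$ every truncated power other than $(\cdot-x_j)_+^{k-1}$ is identically $0$ or a polynomial of degree $\le k-1$, so $E_{k-1}(S,\un)=|b_j|\,E_{k-1}\bigl((\cdot-x_j)_+^{k-1},\un\bigr)$, and a rescaling argument together with the bounded mesh ratio of the Chebyshev partition ($|I_{j\pm1}|<3|I_j|$) gives $E_{k-1}\bigl((\cdot-x_j)_+^{k-1},\un\bigr)\sim|I_j|^{k-1}$. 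Substituting yields $|P_1(x)-P_2(x)|\le c\sum_{j=1}^{n-1}\psi_j(x)^{\mu}E_{k-1}(S,\un)$, which is the claim. The main obstacle is invisible at this level---it sits entirely inside \lem{newlemma}, whose odd-$m$ construction is what allows the sign pattern at $Y_s$ and the Hermite conditions at $A_p\cup Y_s\cup\{\pm1\}$ to coexist; once that lemma is granted, the present argument is bookkeeping plus the standard coefficient bound.
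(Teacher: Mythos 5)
Your proposal is correct and is essentially the paper's own proof: the paper disposes of this lemma by remarking that the argument of \cite{hky}*{Theorem 4} (i.e., \lem{indc3}) carries over verbatim once the building blocks near the knots are taken from \lem{newlemma}, and what you have written out --- the truncated-power representation of $S$, the sign-grouped substitution of the intertwining pairs $R_1^{(j)},R_2^{(j)}$, the cone property of $\Delta^{(0)}(Y_s)$, and the coefficient bound $|b_j|\,|I_j|^{k-1}\le c(k)\,E_{k-1}(S, I_j\cup I_{j+1})$ --- is precisely that scheme. One minor imprecision: when an active knot $x_{j_0}$ sits at the edge of the knot-free zone around some $\beta\in Y_s\cup A_p\cup\{\pm 1\}$ (the case $x_{j_0}=x_{j_i+1}$ in the notation of the definition of $\Sigma_k(\tn,V)$), the interval $(x_{j_0+1},x_{j_0-2})$ \emph{does} contain $\beta$, so your blanket claim fails there; the repair is immediate and stays within your scheme, since \lem{newlemma} allows $\lambda$ to be either endpoint of the reference interval, so one applies it with $\lambda=x_{j_0}$ viewed as the right endpoint of $I_{j_0+1}$ (the four-knot separation then guarantees $(x_{j_0+2},x_{j_0-1})$ is free of points of $Y_s\cup A_p\cup\{\pm1\}$, and $\psi_{j_0+1}\sim\psi_{j_0}$ keeps the final estimate intact).
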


 \sect{Proofs of the main results}

Theorems~\ref{th:inti}/\ref{thone} and  \ref{thcopositive}/\ref{thpositive}  immediately follow from \lem{apprsplines} and, respectively, Theorems~\ref{splnthm1} and \ref{splcopositive}, using the same sequence of estimates as in \cite{hky}*{Section 3}.

\sect{Appendix: Exact pointwise estimates} \label{point}

It is clear that if a polynomial $P_n$ not only approximates $f$ but also  interpolates it (and perhaps its derivatives) then the rate of approximation \ineq{classdir}
can be improved near all interpolation points.

  The following theorem follows from the results obtained in  \cite{kls21}.

\begin{theorem}[\cite{kls21}*{Theorems 1.5, 1.6 and 1.7}]  \label{maingen}
 Let $k \in\N$, $x_0\in I$,  $r,n\in\N_0$,   $f\in C^r$, and suppose that $m\in\N_0$ is such that $m\le r$. If $P_n\in \Pn$  is such that
 \be \label{classKK}
|f(x)-P_n(x)|\le A\rho_n^r(x)\w_k(f^{(r)},\rho_n(x)),\quad x\in I ,
\ee
 and
  \be \label{2g}
P_n^{(j)}(x_0)=f^{(j)}(x_0), \quad \text{for }\; 0\le j \le m,
\ee
  then, for all
  $1\le \ell \le k$ and $x\in I$, we have
\begin{align}  \label{4nnn}
  | & f (x)  -P_n (x)|  \\ \nonumber
   &
 \le c(k,r) A
\begin{cases}
|x-x_0|^{m+1} \rho_n^{r -m-1}(x)  \omega_\ell(f^{(r)}, \rho_n(x)) , & \text{if }\; m \le r-1 ,\\
|x-x_0|^{r} \omega_\ell(f^{(r)},|x-x_0|^{1/\ell} \rho_n^{1-1/\ell}(x)) , & \text{if }\; m=r .
\end{cases}
\end{align}
Moreover, estimates in \ineq{4nnn} cannot be improved in the sense that none of the powers of $|x-x_0|$   can be increased.
\end{theorem}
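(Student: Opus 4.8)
The plan is to obtain the theorem as a direct consequence of \cite{kls21}*{Theorems 1.5, 1.6 and 1.7}, after checking that hypotheses \ineq{classKK} and \ineq{2g} are precisely the assumptions required there. First I would set $g := f-P_n$, so that $g\in C^r$, the estimate \ineq{classKK} controls $|g(x)|$ throughout $I$, and the interpolatory conditions \ineq{2g} say exactly that $g$ vanishes to order $m+1$ at $x_0$, i.e. $g^{(j)}(x_0)=0$ for $0\le j\le m$. Before appealing to the cited results I would dispose of the ``far'' regime $|x-x_0|\ge\rho_n(x)$: in both cases of \ineq{4nnn} the right-hand side then dominates $\rho_n^r(x)\w_\ell(f^{(r)},\rho_n(x))$ (using $|x-x_0|\ge\rho_n(x)$ together with monotonicity of $\w_\ell$ in its second argument), and since $\w_k\le 2^{\,k-\ell}\w_\ell$ this is $\ge c\,\rho_n^r(x)\w_k(f^{(r)},\rho_n(x))$; hence \ineq{4nnn} follows immediately from \ineq{classKK} in this regime, with no use of the interpolatory conditions.

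The real content is the ``near'' regime $|x-x_0|<\rho_n(x)$, and this is exactly what Theorems 1.5--1.7 of \cite{kls21} are built to supply; the split into the cases $m\le r-1$ and $m=r$ in \ineq{4nnn} mirrors the split among those theorems. The mechanism I would follow is the one underlying them: Taylor's formula centred at $x_0$, together with $g^{(j)}(x_0)=0$ for $j\le m$, extracts the factor $|x-x_0|^{m+1}$ (respectively $|x-x_0|^{r}$ when $m=r$), while the remaining integral is controlled by combining the global bound on $g$ with Dzyadyk-- and Bernstein--Markov--type inequalities for the high-order derivatives of the polynomial part near $x_0$ (an inequality of the kind quoted in the proof of \lem{lemma2003new}). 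The possibility of lowering the order of the modulus from $k$ to an arbitrary $1\le\ell\le k$ comes from the realization ($K$-functional) characterisation of $\w_\ell(f^{(r)},\cdot)$, which is what permits rescaling the argument of the modulus; in the borderline case $m=r$ this rescaling is precisely what converts $\rho_n(x)$ into $|x-x_0|^{1/\ell}\rho_n^{1-1/\ell}(x)$.

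For the sharpness clause I would prove that none of the powers of $|x-x_0|$ can be increased by producing extremal pairs $(f,P_n)$, which is the content of the matching lower bounds in \cite{kls21}*{Theorems 1.5, 1.6 and 1.7}. Concretely, for a fixed $n$ one takes $f$ to be a suitably smoothed truncated power adapted to $x_0$ (so that $f^{(r)}$ has a definite modulus of smoothness) together with a $P_n$ realising \ineq{classKK} and \ineq{2g}, and verifies by direct computation at a point $x$ with $|x-x_0|\sim\rho_n(x)$ that the two sides of \ineq{4nnn} are comparable; increasing the exponent $m+1$ (or $r$, or the inner exponent $1/\ell$) would then break this two-sided comparison.

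The step I expect to be the main obstacle is not the far regime or the Taylor bookkeeping but the joint control, in the near regime, of the order of the modulus and of its argument --- in particular checking that the remainder integral can be absorbed into $\w_\ell(f^{(r)},|x-x_0|^{1/\ell}\rho_n^{1-1/\ell}(x))$ in the case $m=r$, where the usual gain $\rho_n^{r-m-1}$ is no longer present. Since \cite{kls21} already packages exactly these localised estimates, the most economical route, and the one I would write up, is to match hypotheses \ineq{classKK}--\ineq{2g} line by line with the assumptions of Theorems 1.5--1.7 and invoke them, rather than reproduce the localisation argument in full here.
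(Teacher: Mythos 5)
Your proposal is correct and follows essentially the same route as the paper: the paper offers no independent proof of this theorem but simply imports it as a consequence of \cite{kls21}*{Theorems 1.5, 1.6 and 1.7}, which is exactly the reduction you ultimately make (your sketches of the far/near regime splitting, the Taylor--Dzyadyk mechanism, and the extremal examples for sharpness are descriptions of the internals of those cited theorems, not additional content needed here). Matching the hypotheses \ineq{classKK} and \ineq{2g} to the assumptions of the cited results and invoking them is precisely what the paper does.
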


The following result immediately follows from  Theorems~\ref{th:inti}, \ref{thone} and \ref{maingen} observing that, for any $(r,m_1,m_2, m_3)\in\intset_{\text{intertwining}}$, we have $m_i < r$, $i=1,2,3$, and so the case $m=r$ in the statement of \thm{maingen} does not hold.
Note also that we are incorporating the onesided case into a  general result for intertwining approximation by allowing $s$ to be $0$, in which case the restriction $P_n\in \I^{(m_2)}(f, Y_0)$ becomes vacuous (recall that the restriction
$P_n\in \I^{(-1)}(f, A_p)$ is also vacuous by definition).

\begin{corollary}[intertwining  approximation with interpolatory constraints]\leavevmode\label{corinti}%
Let $k,p \in\N$, $s\in\N_0$,   $Y_s \in\Y_s$, $A_p\in\A_p$, and $(r,m_1,m_2, m_3)\in\intset_{\text{intertwining}}$, 
defined in \ineq{defintertwining}.
Then, for any  $f\in C^r$ and any $n\ge c(d(Y_s, A_p))$, there exists
\[
  P_n \in \Pn\cap \td\cap  \I^{(m_1)}(f, A_p)\cap \I^{(m_2)}(f, Y_s)  \cap  \I^{(m_3)}(f, \{\pm 1\})
\]
 such that
\[
|f(x)-P_n(x)| \leq   c(k,r,s,p) \rho_n^r(x) \w_k(f^{(r)},\rho_n(x)), \qquad x\in I.
\]
Moreover, for all  $1\le \ell \le k$ and  $\lambda \in \Y_s\cup A_p \cup \{\pm 1\}$, we also have
\[
  | f (x)  -P_n (x)|
 \le c(k,r,s,p) |x-\lambda|^{m+1} \rho_n^{r -m-1}(x)  \w_\ell(f^{(r)}, \rho_n(x)) , \quad x\in I,
\]
where
\[
m :=
\begin{cases}
m_1, & \text{if $\lambda \in A_p$,}\\
m_2, & \text{if $\lambda \in Y_s$,}\\
m_3, & \text{if $\lambda =\pm 1$.}
\end{cases}
\]
\end{corollary}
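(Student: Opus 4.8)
The plan is to obtain \cor{corinti} by combining the existence result for intertwining approximation with the pointwise refinement recorded in \thm{maingen}; no new construction is needed. First I would invoke \thm{th:inti} (or, in the degenerate case $s=0$, \thm{thone}, in which the condition $P_n\in\I^{(m_2)}(f,Y_0)$ is vacuous) to produce, for every $n\ge c(d(Y_s,A_p))$, a polynomial
\[
P_n \in \Pn\cap \td\cap \I^{(m_1)}(f,A_p)\cap \I^{(m_2)}(f,Y_s)\cap \I^{(m_3)}(f,\{\pm 1\})
\]
satisfying
\[
|f(x)-P_n(x)|\le c(k,r,s,p)\,\rho_n^r(x)\,\w_k(f^{(r)},\rho_n(x)),\qquad x\in I.
\]
This is precisely the first displayed inequality of the statement and requires no further argument; moreover it shows that $P_n$ fulfills hypothesis \ineq{classKK} of \thm{maingen} with $A=c(k,r,s,p)$.

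Next I would fix a point $\lambda\in Y_s\cup A_p\cup\{\pm 1\}$ and let $m$ be the associated interpolation order as defined in the statement. The membership of $P_n$ in the corresponding set $\I^{(m)}(f,\{\lambda\})$ gives exactly the interpolation hypothesis \ineq{2g} of \thm{maingen} at $x_0=\lambda$. The crucial observation—already flagged in the paragraph preceding the statement—is that every quadruple $(r,m_1,m_2,m_3)\in\intset_{\text{intertwining}}$ satisfies $m_i<r$ for $i=1,2,3$ (for even $r$ one has $m_1=m_3=r-1$ and $m_2=r-2$; for odd $r$ one has $m_1=r-2$ and $m_2=m_3=r-1$). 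Hence $m\le r-1$ in all cases, so the branch $m=r$ of \ineq{4nnn} never occurs and only its first branch is used. Applying \thm{maingen} for each $1\le\ell\le k$ then yields
\[
|f(x)-P_n(x)|\le c(k,r)\,A\,|x-\lambda|^{m+1}\rho_n^{r-m-1}(x)\,\w_\ell(f^{(r)},\rho_n(x)),\qquad x\in I,
\]
which is the asserted refined estimate with the stated constant $c(k,r,s,p)$.

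The only place needing a moment's care is the single exceptional quadruple $(r,m_1,m_2,m_3)=(1,-1,0,0)$, where $m_1=-1$ and no interpolation at $A_p$ is imposed; there \thm{maingen} cannot be applied at $\lambda\in A_p$. But in that case the claimed bound has $|x-\lambda|^{m+1}=|x-\lambda|^{0}=1$ and $\rho_n^{r-m-1}=\rho_n^r$, so it reduces to $c\,\rho_n^r(x)\,\w_\ell(f^{(r)},\rho_n(x))$, which follows directly from the global estimate together with the elementary relation $\w_k(f^{(r)},t)\le c(k,\ell)\,\w_\ell(f^{(r)},t)$ valid for $\ell\le k$. I do not anticipate any genuine obstacle: the whole argument is a bookkeeping combination of two theorems already established in the excerpt, and the one thing to verify—the strict inequality $m<r$—is immediate from the explicit description of $\intset_{\text{intertwining}}$ in \ineq{defintertwining}.
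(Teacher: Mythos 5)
Your proposal is correct and follows essentially the same route as the paper, which deduces \cor{corinti} directly from Theorems~\ref{th:inti}, \ref{thone} and \ref{maingen} after observing that every $(r,m_1,m_2,m_3)\in\intset_{\text{intertwining}}$ has $m_i<r$, so only the first branch of \ineq{4nnn} is ever needed. Your explicit treatment of the exceptional quadruple $(1,-1,0,0)$ (where \thm{maingen} does not apply at $\lambda\in A_p$ since $m\in\N_0$ is required there, but the claimed bound degenerates to the global estimate via $\w_k(f^{(r)},t)\le c\,\w_\ell(f^{(r)},t)$) is a careful point the paper leaves implicit.
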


We also have a similar   result for (co)positive approximation with interpolatory constraints which follows from Theorems~\ref{thcopositive}, \ref{thpositive} and \ref{maingen}.
The only difference is that there are a few cases for $r=0$ and $r=1$ when $m_i$'s can be equal to $r$, and so the estimates need to be modified accordingly.

\begin{corollary}[copositive   approximation with interpolatory constraints]\leavevmode\label{corcopositive}%
Let $p \in\N$, $s\in\N_0$,    $Y_s \in\Y_s$, $A_p\in\A_p$, and $(r,k,m_1,m_2, m_3)\in\copset_{\text{copositive}}$, defined in \ineq{defcopositive}.
Then, for any  $f\in C^r\cap  \Delta^{(0)}(Y_s)$ and any $n\ge c(d(Y_s, A_p))$, there exists
\[
  P_n \in \Pn\cap \Delta^{(0)}(Y_s) \cap  \I^{(m_1)}(f, A_p)\cap \I^{(m_2)}(f, Y_s)  \cap  \I^{(m_3)}(f, \{\pm 1\})
\]
 such that
\[
|f(x)-P_n(x)| \leq   c(k,r,s,p) \rho_n^r(x) \w_k(f^{(r)},\rho_n(x)), \qquad x\in I.
\]
 Moreover,
if  $\lambda \in  Y_s\cup A_p \cup \{\pm 1\}$ and
\[
m :=
\begin{cases}
m_1, & \text{if $\lambda \in A_p$,}\\
m_2, & \text{if $\lambda \in Y_s$,}\\
m_3, & \text{if $\lambda =\pm 1$.}
\end{cases}
\]
is such that $m< r$, then for all
 for all  $1\le \ell \le k$, we also have
\[
  | f (x)  -P_n (x)|
 \le c(k,r,s,p) |x-\lambda|^{m+1} \rho_n^{r -m-1}(x)  \w_\ell(f^{(r)}, \rho_n(x)) , \quad x\in I.
\]
Additionally, if $r=1$ and $\lambda \in Y_s$ then, for $\ell=1$ or $\ell=2$,
\[
 |f(x) -P_n(x)| \le c(s,p)  |x-\lambda| \w_\ell(f' ,|x-\lambda|^{1/\ell} \rho_n^{1-1/\ell}(x)), \quad x\in I,
\]
and, if $r=0$ and  $\lambda \in  Y_s\cup A_p\cup \{\pm 1\}$  then, for $\ell=1$ or $\ell=2$,
\[
 |f(x) -P_n(x)| \le c(s,p)  \w_\ell(f ,|x-\lambda|^{1/\ell} \rho_n^{1-1/\ell}(x)), \quad x\in I.
\]
\end{corollary}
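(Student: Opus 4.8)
The plan is to obtain this corollary in two stages: first invoke \thm{thcopositive} to manufacture an interpolating polynomial with the correct uniform error, and then feed that polynomial into \thm{maingen} to sharpen the global bound into the localized estimates valid near each interpolation point. Since both ingredients are already available, the work is essentially one of assembly and bookkeeping.

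First I would apply \thm{thcopositive} with the given data $(r,k,m_1,m_2,m_3)\in\copset_{\text{copositive}}$ to the function $f\in C^r\cap\Delta^{(0)}(Y_s)$, producing, for every $n\ge c(d(Y_s,A_p))$, a polynomial
\[
  P_n \in \Pn\cap \Delta^{(0)}(Y_s) \cap  \I^{(m_1)}(f, A_p)\cap \I^{(m_2)}(f, Y_s)  \cap  \I^{(m_3)}(f, \{\pm 1\})
\]
satisfying $|f(x)-P_n(x)| \le c(k,r,s,p)\rho_n^r(x)\w_k(f^{(r)},\rho_n(x))$ for $x\in I$. This is exactly the first displayed estimate of the corollary, so nothing remains for it; the rest of the argument only localizes this bound.

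Next I would verify that $P_n$ meets the hypotheses of \thm{maingen} at each prescribed point. Indeed, the uniform estimate just obtained is precisely \ineq{classKK} with constant $A=c(k,r,s,p)$, and for every fixed $\lambda\in Y_s\cup A_p\cup\{\pm 1\}$ the interpolation conditions built into $P_n$ supply \ineq{2g} with $x_0=\lambda$ and $m$ equal to $m_1$, $m_2$, or $m_3$ according as $\lambda\in A_p$, $\lambda\in Y_s$, or $\lambda=\pm1$. In each case one checks $m\le r$, so \thm{maingen} applies for every $\ell$ with $1\le\ell\le k$, and \ineq{4nnn} furnishes the refined pointwise bounds.

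The only point requiring attention is the dichotomy in \ineq{4nnn} between $m<r$ and $m=r$. Inspecting \ineq{defcopositive}, every tuple with $r\ge2$ has $m_1,m_2,m_3<r$, so the first branch of \ineq{4nnn} always applies and yields the ``Moreover'' estimate $c(k,r,s,p)|x-\lambda|^{m+1}\rho_n^{r-m-1}(x)\w_\ell(f^{(r)},\rho_n(x))$. The equality $m=r$ can arise only from the two special tuples $(0,2,0,0,0)$, where $m_1=m_2=m_3=0=r$, and $(1,2,0,1,0)$, where $m_2=1=r$ so that $m=r$ occurs exactly when $\lambda\in Y_s$; in both cases $k=2$, which is why $\ell$ is restricted to $\{1,2\}$. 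Applying the second branch of \ineq{4nnn} in these two situations produces precisely the final two displayed estimates (for $r=0$ and for $r=1$ with $\lambda\in Y_s$), and since $r$ and $k$ are then fixed numbers the dependence $c(k,r,s,p)$ collapses to $c(s,p)$, matching the stated constants. Thus the main (and only real) obstacle is the bookkeeping: correctly pairing each $\lambda$ with its interpolation order $m$ and confirming that the $m=r$ branch is invoked in exactly the two asserted special cases.
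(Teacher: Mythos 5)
Your proposal is correct and is essentially the paper's own argument: the paper obtains the corollary by combining Theorem~\ref{thcopositive} (which gives the interpolating copositive polynomial with the uniform bound, i.e.\ \ineq{classKK}) with Theorem~\ref{maingen}, noting exactly as you do that the branch $m=r$ of \ineq{4nnn} occurs only for the tuples $(0,2,0,0,0)$ and $(1,2,0,1,0)$ with $\lambda\in Y_s$, which produces the two modified estimates with $\ell\in\{1,2\}$. Your bookkeeping of which $\lambda$ gets which $m$, and of why the constants collapse to $c(s,p)$ in those cases, matches the intended proof.
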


We note that, for $r=0$, $A_p = \{-1,1 \}$ and $k=\ell=2$, \cor{corcopositive} was proved in \cite{dz22}.

\begin{bibsection}
\begin{biblist}

\bib{bea}{article}{
   author={Beatson, R. K.},
   title={Restricted range approximation by splines and variational
   inequalities},
   journal={SIAM J. Numer. Anal.},
   volume={19},
   date={1982},
   number={2},
   pages={372--380},
   issn={0036-1429},
}

\bib{b}{article}{
   author={Bullen, P. S.},
   title={A criterion for $n$-convexity},
   journal={Pacific J. Math.},
   volume={36},
   date={1971},
   pages={81--98},
   issn={0030-8730},
}

\bib{dzya}{book}{
author={Dzyadyk, V. K.},
author={Shevchuk, I. A.},
title={Theory of Uniform Approximation of Functions by Polynomials},
publisher={Walter de Gruyter},
place={Berlin},
date={2008},
pages={xv+480},
}

\bib{dz22}{article}{
   author={Dzyubenko, G. A.},
   title={Interpolated estimate for copositive approximations by algebraic
   polynomials},
   journal={Ukrainian Math. J.},
   volume={74},
   date={2022},
   number={4},
   pages={563--574},
   issn={0041-5995},
}

\bib{dz96}{article}{
   author={Dzyubenko, G. A.},
   title={Copositive pointwise approximation},
   language={Russian, with English and Ukrainian summaries},
   journal={Ukra\"{\i}n. Mat. Zh.},
   volume={48},
   date={1996},
   number={3},
   pages={326--334},
   issn={0041-6053},
   translation={
      journal={Ukrainian Math. J.},
      volume={48},
      date={1996},
      number={3},
      pages={367--376 (1997)},
      issn={0041-5995},
   },
}

\bib{hky}{article}{
   author={Hu, Y. K.},
   author={Kopotun, K. A.},
   author={Yu, X. M.},
   title={Constrained approximation in Sobolev spaces},
   journal={Canad. J. Math.},
   volume={49},
   date={1997},
   number={1},
   pages={74--99},
   issn={0008-414X},
}

\bib{klps2011}{article}{
   author={Kopotun, K. A.},
   author={Leviatan, D.},
   author={Prymak, A.},
   author={Shevchuk, I. A.},
   title={Uniform and pointwise shape preserving approximation by algebraic
   polynomials},
   journal={Surv. Approx. Theory},
   volume={6},
   date={2011},
   pages={24--74},
}

\bib{kls21}{article}{
   author={Kopotun, K. A.},
   author={Leviatan, D.},
   author={Shevchuk, I. A.},
   title={Exact order of pointwise estimates for polynomial approximation
   with Hermite interpolation},
   journal={J. Approx. Theory},
   volume={264},
   date={2021},
   pages={Paper No. 105538, 25},
   issn={0021-9045},
}

\bib{kls-umzh}{article}{
   author={Kopotun, K. A.},
   author={Leviatan, D.},
   author={Shevchuk, I. A.},
   title={On one estimate of divided differences and its applications},
   language={English, with English and Ukrainian summaries},
   journal={Ukra\"{\i}n. Mat. Zh.},
   volume={71},
   date={2019},
   number={2},
   pages={230--245},
   issn={1027-3190},
   translation={
      journal={Ukrainian Math. J.},
      volume={71},
      date={2019},
      number={2},
      pages={259--277},
      issn={0041-5995},
   },
}

\bib{zhou-atappl}{article}{
   author={Zhou, S. P.},
   title={On copositive approximation},
   journal={Approx. Theory Appl. (N.S.)},
   volume={9},
   date={1993},
   number={2},
   pages={104--110},
   issn={1000-9221},
}

\bib{zhou}{article}{
   author={Zhou, S. P.},
   title={A counterexample in copositive approximation},
   journal={Israel J. Math.},
   volume={78},
   date={1992},
   number={1},
   pages={75--83},
   issn={0021-2172},
}

\end{biblist}
\end{bibsection}

\end{document}